\documentclass[12pt]{amsart}

\voffset=-1.4mm
\oddsidemargin=17pt \evensidemargin=17pt
\headheight=9pt     \topmargin=26pt
\textheight=576pt   \textwidth=440.8pt
\parskip=0pt plus 4pt

\usepackage[usenames,dvipsnames]{xcolor}

\usepackage{enumitem,kantlipsum}

\usepackage{mathtools}
\usepackage{xfrac}
\usepackage[utf8]{inputenc}
\usepackage{amssymb}
\usepackage{amsmath, amsthm}
\usepackage{amsfonts}
\usepackage{tikz}
\usepackage{algorithmic}
\usepackage{asymptote}
\usepackage{url}
\newtheorem{theorem}{Theorem}[section]
\newtheorem{corollary}{Corollary}[theorem]
\newtheorem{lemma}[theorem]{Lemma}
\newtheorem{proposition}[theorem]{Proposition}

\newtheorem{definition}[theorem]{Definition}

\usepackage[OT2,T1]{fontenc}
\DeclareSymbolFont{cyrletters}{OT2}{wncyr}{m}{n}
\DeclareMathSymbol{\Sha}{\mathalpha}{cyrletters}{"58}

\theoremstyle{definition}

\newcommand{\Pf}{\mathcal{P}_{\text{fin}}}
\newcommand{\X}{\mathcal{X}}
\newcommand{\Y}{\mathcal{Y}}
\newcommand{\N}{\mathbb{N}}
\newcommand{\Z}{\mathbb{Z}}
\newcommand{\Zbar}{\overline{\mathbb{Z}}}

\newcommand{\E}{\mathbb{E}}
\newcommand{\PPP}{\mathbb{P}}

\newcommand{\OO}{\mathcal{O}}
\newcommand{\p}{\mathfrak{p}}
\newcommand{\q}{\mathfrak{q}}
\newcommand{\PP}{\mathcal{P}}

\newcommand{\LL}{\mathcal{L}}
\newcommand{\Q}{\mathbb{Q}}

\newcommand{\Spec}{\text{Spec}}
\newcommand{\disc}{\text{Disc}}

\newcommand{\A}{\mathcal{A}}
\newcommand{\FF}{\mathcal{F}}

\newcommand{\CC}{\mathcal{C}}

\newcommand{\Cl}{\text{Cl}}
\newcommand{\Op}{O\Big(\frac{1}{p}\Big)}
\newcommand\restr[2]{{
  \left.\kern-\nulldelimiterspace 
  #1 
  \vphantom{\big|} 
  \right|_{#2} 
  }}

\newcommand{\legendre}[2]{\genfrac{(}{)}{}{}{#1}{#2}}

\newenvironment{customthm}[1]
  {\innercustomthm}
  {\endinnercustomthm}

\newenvironment{customprop}[1]
  {\innercustomprop}
  {\endinnercustomprop}

\begin{document}
\title{Denisty questions in rings of the form $\OO_K[\gamma]\cap K$.}
\author{Deepesh Singhal, Yuxin Lin}

\begin{abstract}
We fix a number field $K$ and study statistical properties of the ring $\OO_K[\gamma]\cap K$ as $\gamma$ varies over algebraic numbers of a fixed degree $n\geq 2$. Given $k\geq 1$, we explicitly compute the density of $\gamma$ for which $\OO_K[\gamma]\cap K =\OO_K[1/k]$ and show that this does not depend on the number field $K$. In particular, we show that the density of $\gamma$ for which $\OO_K[\gamma]\cap K=\OO_K$ is $\frac{\zeta(n+1)}{\zeta(n)}$.
In a recent paper \cite{Paper 1}, the authors define $X(K,\gamma)$ to be a certain finite subset of $\Spec(\OO_K)$ and show that $X(K,\gamma)$ determines the ring $\OO_K[\gamma]\cap K$. We show that if $\p_1,\p_2\in \Spec(\OO_K)$ satisfy $\p_1\cap \Z\neq\p_2\cap \Z$, then the events $\p_1\in X(K,\gamma)$ and $\p_2\in X(K,\gamma)$ are independent. As $t\to\infty$, we study the asymptotics of the density of $\gamma$ for which $|X(K,\gamma)|=t$.
\end{abstract}

\maketitle

\section{Introduction}

Let $\overline{\Q}$ be the algebraic closure of $\Q$ and $\overline{\Z}$ be the ring of algebraic integers.
For $\gamma\in\overline{\Q}$, we denote the minimal polynomial of $\gamma$ in $\Z[x]$ as $F_{\gamma}(x)=a_nx^n+a_{n-1}x^{n-1}+\dots+a_0$. So $F_{\gamma}(x)$ is irreducible in $\Z[x]$, $F_{\gamma}(\gamma)=0$, $a_n>0$ and $\gcd(a_0,a_1,\dots,a_n)=1$. We also denote $\deg(\gamma)=n$ and the height of $\gamma$ as $H(\gamma)=\max_{0\leq i\leq n}|a_i|$.
We order algebraic numbers by degree and height. Denote
$$\A_n(H)=\{\gamma\in\overline{\Q}\mid \deg(\gamma)=n,\;H(\gamma)\leq H\}.$$
Note that for any $n$, $H$ the set $\A_{n}(H)$ is finite. Let $\A_n=\bigcup_{H=1}^{\infty}\A_n(H)$. We define a probability distribution $\PPP_n$ on $\A_n$, so that given property $P$
$$\PPP_n[P]=\lim_{H\to\infty}\frac{\#\{\gamma\in\A_n(H)\mid \gamma\text{ satisfies }P\}}{\#A_n(H)}.$$
The following result was proved by van der Waerden \cite{Van Der Waerden}.
\begin{theorem}
[\cite{Van Der Waerden}]\label{Galois group Sn}
For any $n\geq 1$ we have
$$\PPP_n\big[\text{Galois closure of }\Q(\gamma)\text{ has Galois group }S_n\big]=1.$$
\end{theorem}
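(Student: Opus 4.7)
I would reduce the statement to van der Waerden's classical result on integer polynomials. The Galois closure of $\Q(\gamma)$ is the splitting field of $F_\gamma(x)$ over $\Q$, so the event depends only on the minimal polynomial. Since $F_\gamma$ is irreducible in $\Z[x]$ and the characteristic is zero, it has exactly $n$ distinct complex roots, each of height $H(\gamma)$ by definition. Letting $\PP_n(H) \subset \Z[x]$ denote the primitive irreducible polynomials of degree $n$ with positive leading coefficient and height at most $H$, we get $\#\A_n(H) = n \cdot \#\PP_n(H)$ and
\begin{equation*}
\PPP_n\bigl[\text{Gal closure has group} \neq S_n\bigr] = \lim_{H\to\infty} \frac{\#\{F \in \PP_n(H) : \text{Gal}(F)\neq S_n\}}{\#\PP_n(H)}.
\end{equation*}

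Next I would invoke van der Waerden's theorem in its form for non-monic integer polynomials: the number of $F(x) = a_n x^n + \dots + a_0 \in \Z[x]$ of degree $n$ with $\max_i |a_i| \leq H$ whose splitting field over $\Q$ has Galois group properly contained in $S_n$ is $o(H^{n+1})$ as $H \to \infty$. The numerator of our ratio is bounded above by this count, since imposing irreducibility and primitivity only shrinks the exceptional set. A standard lattice-point calculation gives $\#\PP_n(H) \asymp H^{n+1}$, because the conditions ``$a_n > 0$'', ``primitive'', and ``irreducible'' each hold on a positive-density subset of all polynomials in the $H$-box, contributing asymptotic factors $\tfrac12$, $1/\zeta(n+1)$, and $1$ respectively. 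The ratio therefore tends to $0$, which is the desired conclusion.

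The only genuine content here is van der Waerden's theorem itself, which we are citing; the main obstacle within the plan is the routine but necessary bookkeeping to confirm that the primitivity, irreducibility, and positive-leading-coefficient restrictions on the denominator preserve the $\asymp H^{n+1}$ order of magnitude, so that an $o(H^{n+1})$ numerator forces the ratio to vanish. No new input beyond van der Waerden is needed once this matching of orders is in hand.
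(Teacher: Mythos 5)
Your plan is sound, but note that the paper offers no proof of this statement at all: it is cited directly to van der Waerden as a known result, with the conventions on $\A_n(H)$ set up to match. Your reduction --- identifying the Galois closure with the splitting field of $F_\gamma$, passing to primitive irreducible polynomials with positive leading coefficient via $\#\A_n(H)=n\cdot\#\PP_n(H)$, invoking the $o(H^{n+1})$ van der Waerden bound for the exceptional set, and confirming $\#\PP_n(H)\asymp H^{n+1}$ --- is the correct bookkeeping one would carry out to transfer the classical polynomial-counting statement to the algebraic-number formulation, and it matches the asymptotics the paper records for $\#\A_n(H)$ in Section~\ref{Sec: Base ring Z}.
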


We show that given a number field $K$, $\Q(\gamma)$ will almost always intersect trivially with $K$.
\begin{theorem}\label{K cap Qgamma is Q}
Fix a number field field $K$ and positive integer $n$, then we have
$$\PPP_n[K\cap \Q(\gamma)=\Q]=1.$$
\end{theorem}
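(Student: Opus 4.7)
The plan is to combine Theorem \ref{Galois group Sn} with a Northcott-style count of algebraic numbers lying in a fixed number field. For $n=1$ the statement is trivial, so assume $n\geq 2$.

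\emph{Step 1 (Galois-theoretic reduction).} By Theorem \ref{Galois group Sn}, with $\PPP_n$-probability one the Galois closure of $\Q(\gamma)/\Q$ has Galois group $S_n$. Since the standard action of $S_n$ on $\{1,\dots,n\}$ is $2$-transitive, the point stabilizer $S_{n-1}$ is a maximal subgroup, so by the Galois correspondence $\Q(\gamma)$ admits no intermediate subfield strictly between $\Q$ and itself. Hence $K\cap\Q(\gamma)\in\{\Q,\,\Q(\gamma)\}$ almost surely, and the event $K\cap\Q(\gamma)\neq\Q$ coincides (up to a null set) with $\Q(\gamma)\subseteq K$.

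\emph{Step 2 (Reducing to a fixed number field).} Since $[K:\Q]<\infty$, $K$ contains only finitely many subfields of degree $n$ over $\Q$; call them $L_1,\dots,L_r$. Combined with $\deg\gamma=n$, the inclusion $\Q(\gamma)\subseteq K$ forces $\gamma\in\bigcup_i L_i$, so
$$\PPP_n[\Q(\gamma)\subseteq K]\;\leq\;\sum_{i=1}^{r}\PPP_n[\gamma\in L_i].$$
If $r=0$ we are done; otherwise it suffices to prove $\PPP_n[\gamma\in L]=0$ for each fixed number field $L$ with $[L:\Q]=n$.

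\emph{Step 3 (Thin-set counting in $L$).} Since $|\A_n(H)|\asymp H^{n+1}$, the claim reduces to $\#\{\gamma\in L:H(\gamma)\leq H\}=o(H^{n+1})$. Write $\gamma=\alpha/a_n$ with $\alpha=a_n\gamma\in\OO_L$ and $a_n$ the positive leading coefficient of $F_\gamma$. From $a_0=\pm a_n\Nm_{L/\Q}(\gamma)$ one gets $\Nm_{L/\Q}(\alpha)=\pm a_n^{n-1}a_0$, hence both $a_n^{n-1}\mid\Nm_{L/\Q}(\alpha)$ and $|\Nm_{L/\Q}(\alpha)|\leq Ha_n^{n-1}$ hold, while Cauchy's root bound gives $|\sigma_j(\alpha)|\leq a_n+H\leq 2H$ for every embedding $\sigma_j:L\hookrightarrow\C$. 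For each fixed $(a_n,m)\in[1,H]\times[-H,H]$, the number of $\alpha\in\OO_L$ of norm $\pm a_n^{n-1}m$ with all archimedean conjugates at most $2H$ is at most (number of integral ideals of $\OO_L$ of that norm) $\times$ (number of unit multiples of a fixed generator with conjugates at most $2H$), which by the divisor-style bound $\#\{I\subseteq\OO_L:\Nm(I)=N\}=O_\epsilon(N^\epsilon)$ and the standard logarithmic count on the unit lattice is $O_\epsilon(H^{n\epsilon}(\log H)^{r_1+r_2-1})$. Summing over the $O(H^2)$ pairs $(a_n,m)$ yields a total of $O_\epsilon(H^{2+n\epsilon}(\log H)^{r_1+r_2-1})$, which is $o(H^{n+1})$ for any $\epsilon<1-1/n$, as required.

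\emph{Main obstacle.} The substantive step is Step 3. The purely archimedean estimate for $\alpha$ (lattice points of $\OO_L$ in a box of side $2H$) gives $O(H^n)$ per $a_n$, summing to $O(H^{n+1})$ and matching $|\A_n(H)|$ exactly, so no density is gained by geometry alone. The saving must come from the arithmetic constraint $a_n^{n-1}\mid\Nm_{L/\Q}(\alpha)$: it collapses $\Nm(\alpha)$ to one of $O(H)$ prescribed values, at which point replacing the lattice-point count by a count of ideals of fixed norm drops the exponent strictly below $n+1$.
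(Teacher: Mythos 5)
Your proposal is correct, and it takes a genuinely different route from the paper. Both proofs begin with van der Waerden's theorem (Theorem \ref{Galois group Sn}) to conclude that, almost surely, $\Q(\gamma)$ has no proper intermediate subfields, so the event $K\cap\Q(\gamma)\neq\Q$ reduces to $\Q(\gamma)\subseteq K$, hence to $\gamma$ lying in one of the finitely many degree-$n$ subfields $L$ of $K$. The paper handles $\PPP_n[\Q(\gamma)=L]=0$ by a two-case induction: when the Galois closure of $L$ does not have group $S_n$ it appeals again to van der Waerden, and in the critical case it reduces to $\PPP_n[\exists y: \disc(F_\gamma)=my^2]=0$ (Lemma \ref{Lemma: Disc=my^2}), whose proof needs the Bombieri--Pila bound (Lemma \ref{Pila}), Cohen's quantitative Hilbert irreducibility (Lemma \ref{Hilbert Irr bound}), and a nontrivial combinatorial computation of a coefficient of the discriminant polynomial (Lemma \ref{An-1^n}). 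You instead prove $\PPP_n[\gamma\in L]=0$ directly by a Northcott-style count: writing $\gamma=\alpha/a_n$ with $\alpha\in\OO_L$, you observe $\Nm_{L/\Q}(\alpha)=\pm a_n^{n-1}a_0$ and $|\sigma_j(\alpha)|\leq 2H$, then bound the count by (ideals of a given norm $\leq H^n$, which is $O_\epsilon(H^{n\epsilon})$) times (generators of a fixed principal ideal in a box of side $2H$, which is $O_L((\log H)^{r_1+r_2-1})$ uniformly via the log-unit lattice), summed over $O(H^2)$ choices of $(a_n,a_0)$, giving $O_\epsilon(H^{2+n\epsilon}(\log H)^{r_1+r_2-1})=o(H^{n+1})$ once $\epsilon<1-1/n$. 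The self-contained steps deserve a word of care when fleshed out: the unit count must be shown to be uniform over all ideals of norm up to $H^n$ (which follows since the relevant region in the log-unit hyperplane, after translating by a generator, is a simplex with $\sum d_j c_j = n\log(2H)-\log N\geq 0$, giving volume $O((\log H)^{r_1+r_2-1})$ independent of the generator), and $a_0\neq 0$ since $\deg\gamma=n$. What the paper's route buys beyond the theorem is the separate statement that $\disc(F_\gamma)$ almost never lands in a fixed square class, which is of independent interest; what yours buys is avoiding any arithmetic-geometry input beyond van der Waerden and standard facts (divisor bounds and Dirichlet's unit theorem) about a fixed number field.
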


Drungilas, Dubickas and Jankauskas show in \cite{Z[gamma]cap Q}, that the ring $\Z[\gamma]\cap \Q$ is determined by the quantity $e(\gamma)=\gcd(a_1,a_2,\dots,a_n)$.
\begin{theorem}\cite[Theorem 5]{Z[gamma]cap Q}\label{Thm Z[gamma] cap Q}
Given an algebraic number $\gamma\in\overline{\Q}$, we have
$$\Z[\gamma]\cap\Q=\left\{\alpha\in\Q\mid \text{for all primes p, if }v_p(\alpha)<0 ,\text{ then } p\mid e(\gamma)\right\}.$$
\end{theorem}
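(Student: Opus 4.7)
The plan is to decompose the problem into two stages: first reduce to a primewise criterion, then establish the criterion directly. My starting observation is that $\Z[\gamma]\cap\Q$ is a subring of $\Q$ containing $\Z$, and any such ring has the form $\Z_S := \Z[1/p : p \in S]$ for some set $S$ of rational primes. Indeed, for $a/b\in R$ written in lowest terms, Bezout coefficients $ua+vb=1$ give $1/b = u(a/b) + v \in R$, from which $1/p\in R$ follows for every $p\mid b$. Noting that the set on the right-hand side of the theorem is precisely $\Z_{S_0}$ for $S_0 = \{p : p\mid e(\gamma)\}$, the theorem therefore reduces to establishing the equality $\{p : 1/p\in\Z[\gamma]\} = \{p : p\mid e(\gamma)\}$.

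For the forward direction of the primewise criterion, I would observe that $1/p\in\Z[\gamma]$ means $P(\gamma)=1/p$ for some $P\in\Z[x]$, so $\gamma$ is a root of $pP(x)-1\in\Z[x]$. Hence there exists $R(x)\in\Q[x]$ with $F_\gamma(x)R(x) = pP(x)-1$. Since $F_\gamma$ is primitive in $\Z[x]$ and the right-hand side lies in $\Z[x]$, Gauss's lemma upgrades this to $R\in\Z[x]$. Reducing modulo $p$ yields $\bar F_\gamma(x)\,\bar R(x) = -1$ in $\F_p[x]$, which forces $\bar F_\gamma$ to be a nonzero constant. This says exactly that $p\mid a_i$ for every $i\geq 1$ (with $\bar a_0\neq 0$ automatic from primitivity of $F_\gamma$), i.e., $p\mid e(\gamma)$.

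The converse is constructive: if $p\mid e(\gamma)$, then primitivity of $F_\gamma$ forces $p\nmid a_0$, so I can pick $c\in\Z$ with $ca_0\equiv 1\pmod p$. Then every coefficient of $cF_\gamma(x)-1$ is divisible by $p$, so $cF_\gamma(x)-1 = pQ(x)$ for some $Q\in\Z[x]$; evaluating at $\gamma$ gives $-1 = pQ(\gamma)$, exhibiting $1/p = -Q(\gamma)\in\Z[\gamma]$.

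The only mildly subtle point is the Gauss's lemma step, which needs to be invoked carefully because $F_\gamma$ is non-monic in general; however the standard content-multiplicativity formulation applies to any primitive polynomial, so this is not a genuine obstacle. Everything else is direct arithmetic in $\Z[x]$ and $\F_p[x]$.
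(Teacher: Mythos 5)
Your proof is correct, but note that the paper you are working from does not prove this statement at all: Theorem \ref{Thm Z[gamma] cap Q} is quoted from Drungilas--Dubickas--Jankauskas and used as a black box, so there is no ``paper's own proof'' to compare against. What you have produced is a self-contained derivation, and each step holds up under scrutiny. The reduction to a primewise criterion via the classification of intermediate rings $\Z\subseteq R\subseteq\Q$ is sound (and the Bezout argument for $1/b\in R$ is the right one). In the forward direction, the Gauss's lemma upgrade from $R\in\Q[x]$ to $R\in\Z[x]$ is valid precisely because $F_\gamma$ is primitive: writing $R=(a/b)R_0$ with $R_0$ primitive and $\gcd(a,b)=1$, multiplicativity of content forces $b\mid a$ and hence $b=1$; non-monicity of $F_\gamma$ is indeed irrelevant here, as you flag. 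Reduction mod $p$ then forces $\bar F_\gamma$ to be a unit in $\F_p[x]$, i.e.\ a nonzero constant, which is exactly $p\mid a_i$ for all $i\geq 1$ (and $p\nmid a_0$ for free from primitivity), matching the definition $e(\gamma)=\gcd(a_1,\dots,a_n)$. The converse construction $cF_\gamma(x)-1=pQ(x)$ with $ca_0\equiv 1\pmod p$ is the standard witness and is correct. The one tiny point you could make explicit is that $pP(x)-1$ is genuinely a nonzero, nonconstant polynomial (its reduction mod $p$ is $-1$, and a constant in $\Z[x]$ cannot evaluate to $1/p$), so the divisibility $F_\gamma\mid pP-1$ in $\Q[x]$ is meaningful; but this is immediate and not a gap.
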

\begin{corollary}
Given $\gamma\in\overline{\Q}$, we have $\Z[\gamma]\cap \Q=\Z$ if and only if $e(\gamma)=1$.
\end{corollary}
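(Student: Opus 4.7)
The plan is to deduce this directly from Theorem~\ref{Thm Z[gamma] cap Q}, so the proof is essentially a matter of unpacking the characterization for the two possible values of $e(\gamma)$. Recall that $\Z$ is precisely the set of $\alpha\in\Q$ with $v_p(\alpha)\geq 0$ for every prime $p$, so the equality $\Z[\gamma]\cap\Q=\Z$ is equivalent to saying that the set on the right-hand side of Theorem~\ref{Thm Z[gamma] cap Q} collapses to $\Z$.

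For the forward implication, I would assume $e(\gamma)=1$. Then no prime divides $e(\gamma)$, so the conditional ``if $v_p(\alpha)<0$ then $p\mid e(\gamma)$'' can only be satisfied vacuously, forcing $v_p(\alpha)\geq 0$ for every prime $p$. Hence the described set equals $\Z$, and by Theorem~\ref{Thm Z[gamma] cap Q} we obtain $\Z[\gamma]\cap\Q=\Z$.

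For the converse, I would argue by contrapositive. If $e(\gamma)>1$, pick any prime $p_0\mid e(\gamma)$ and consider $\alpha=1/p_0\in\Q$. Then $v_{p_0}(\alpha)=-1<0$ with $p_0\mid e(\gamma)$, while $v_q(\alpha)=0$ for every other prime $q$, so $\alpha$ lies in the set of Theorem~\ref{Thm Z[gamma] cap Q}. Thus $1/p_0\in\Z[\gamma]\cap\Q\setminus\Z$, showing $\Z[\gamma]\cap\Q\neq \Z$.

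There is no genuine obstacle here; the only thing to be careful about is to recognize that the characterization of $\Z$ in terms of nonnegative valuations at every prime is what bridges Theorem~\ref{Thm Z[gamma] cap Q} to the statement. Once that is in hand, both directions are a one-line check.
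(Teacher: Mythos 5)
Your proof is correct and is exactly the direct unpacking of Theorem~\ref{Thm Z[gamma] cap Q} that the paper intends (the paper itself gives no separate argument, treating the corollary as immediate). Both directions are handled just as the paper would: vacuous satisfaction of the condition when $e(\gamma)=1$, and the witness $1/p_0$ when some prime $p_0$ divides $e(\gamma)$.
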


It is clear that all algebraic integers satisfy $e(\gamma)=1$. However, it is possible to have $e(\gamma)=1$ even if $\gamma$ is not an algebraic integer (for example $\gamma=\frac{1}{2+i}$).
We study the distribution of $e(\gamma)$, as $\gamma$ varies among algebraic numbers of degree $n$.
\begin{proposition}\label{prob e=k}
Given $n\geq 2$ and $k\geq 1$, we have
$$\PPP_n[e(\gamma)=k]=\frac{\zeta(n+1)}{\zeta(n)}\frac{\phi(k)}{k^{n+1}}.$$
\end{proposition}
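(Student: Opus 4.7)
The plan is to reduce the question to counting integer polynomials and then compute the ratio via $p$-adic local densities. Via the correspondence $\gamma\mapsto F_\gamma$, each primitive irreducible degree-$n$ polynomial over $\Z$ with positive leading coefficient contributes exactly $n$ elements to $\A_n(H)$ (provided its height is at most $H$), and since $e(\gamma)$ depends only on $F_\gamma$, the factor of $n$ cancels:
\[
\PPP_n[e(\gamma)=k] = \lim_{H\to\infty}\frac{\#\{F \text{ primitive irreducible of degree } n,\ a_n>0,\ \text{height}\leq H,\ e(F)=k\}}{\#\{F \text{ primitive irreducible of degree } n,\ a_n>0,\ \text{height}\leq H\}}.
\]
A quantitative form of van der Waerden's theorem shows that the number of reducible primitive polynomials of degree $n$ and height at most $H$ is $o(H^{n+1})$, whereas the total count of primitive polynomials grows like a positive constant times $H^{n+1}$. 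I can therefore drop the word \emph{irreducible} from both numerator and denominator.

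Working in the box $B_H=\{(a_0,\ldots,a_n)\in\Z^{n+1}:|a_i|\leq H,\ a_n>0\}$, the condition ``$e=k$'' decouples over primes, so I compute a $p$-adic local density at each prime. If $p\nmid k$, the condition becomes $p\nmid\gcd(a_1,\ldots,a_n)$, which has density $1-p^{-n}$ and automatically forces $p\nmid\gcd(a_0,\ldots,a_n)$. If $p^m\|k$ with $m\geq 1$, the condition requires $p^m$ to exactly divide $\gcd(a_1,\ldots,a_n)$ and $p\nmid a_0$; since $a_0$ is independent of the remaining coefficients, the local density is $(1-1/p)(1-p^{-n})p^{-mn}$. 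Multiplying over all primes gives
\[
\prod_p(1-p^{-n})\cdot\prod_{p\mid k}(1-1/p)\,p^{-v_p(k)n} \;=\; \frac{1}{\zeta(n)}\cdot\frac{\phi(k)}{k^{n+1}}.
\]
The density of primitive tuples in $B_H$ is the classical $1/\zeta(n+1)$. Dividing the two densities yields $\frac{\zeta(n+1)}{\zeta(n)}\cdot\frac{\phi(k)}{k^{n+1}}$, as claimed.

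The main technical step is rigorously converting this product of local densities into a global count as $H\to\infty$. The standard argument truncates to primes $p\leq P$, uses inclusion-exclusion over squarefree divisors supported in $\{p\leq P\}$ to obtain the truncated product up to an error $O_P(H^n)$ that is negligible against $H^{n+1}$, and then sends $P\to\infty$; the error from the omitted primes is controlled by the tail $\sum_{p>P}p^{-n}$, which is summable precisely because $n\geq 2$. This is exactly where the hypothesis $n\geq 2$ of the proposition enters, and is the only delicate point: the rest of the argument is a direct local computation.
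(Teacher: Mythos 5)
Your argument is correct, and it takes a genuinely different route from the paper. You compute $\PPP_n[e(\gamma)=k]$ by decoupling the condition over primes and multiplying $p$-adic local densities, then passing to the limit via a truncation-and-tail argument; the paper instead writes down the bijection $(c_0,\ldots,c_n)\mapsto\big(c_0,c_1/k,\ldots,c_n/k\big)$ that maps $\{e=k,\ \gcd(c_0,\ldots,c_n)=1\}$ onto $\{c_0\in[-H,H]:\gcd(c_0,k)=1\}\times\CC_n(H/k)$, and then simply quotes Nymann's count of coprime $n$-tuples in a box to get $\#\CC_n(k,H)=\frac{\phi(k)}{k^{n+1}}\frac{(2H)^{n+1}}{\zeta(n)}+O_n(\cdot)$, summing it over $k$ in a set $Y$ in their Lemma~\ref{lem count e in Y}. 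The bijection/counting route buys the authors an explicit error term and a clean power-saving subtraction to handle the reducible polynomials; your local-density route is slicker conceptually and avoids the nested coprimality bookkeeping, but you must do the work of justifying the passage from local densities to a global count (the truncation you sketch is precisely where that burden falls, and you correctly identify $n\geq 2$ as what makes the tail $\sum_{p>P}p^{-n}$ summable). One small caveat: the fact that reducible primitive degree-$n$ polynomials with height at most $H$ are $o(H^{n+1})$ is better attributed to the Polya--Szego count $\#\LL_n(H)=\frac{(2H)^{n+1}}{\zeta(n+1)}+O_n(H^n\log^2H)$ than to van der Waerden's theorem (which addresses the finer question of the Galois group being $S_n$); your claim is correct in substance but the attribution is loose.
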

Note that for $n\geq 2$, we have
$\frac{\zeta(n+1)}{\zeta(n)}\sum_{k=1}^{\infty}\frac{\phi(k)}{k^{n+1}}=1$.
For a number field $K$, we denote the ring of algebraic integers in $K$ as $\OO_K$ and the set of non-zero prime ideals of $\OO_K$ as $\Spec(\OO_K)$. Denote the minimal polynomial of $\gamma$ over $K$ as $f_{K,\gamma}(x)=b_nx^n+\dots+b_0\in K[x]$. So $f_{K,\gamma}(x)$ is irreducible in $K[x]$, $f_{K,\gamma}(\gamma)=0$ and $b_n=1$.
Singhal and Lin in \cite{Paper 1} study rings of the form $\OO_K[\gamma]\cap K$. For this they define certain finite sets of prime ideals $X(K,\gamma)\subseteq \Spec(\OO_K)$.

\begin{theorem}\cite[Theorem 1.3]{Paper 1}\label{X cond equiv}
Given a number ring $\OO_K$, an algebraic number $\gamma$ and a prime $\p\in\Spec(\OO_K)$, the following are equivalent:
\begin{itemize}
    \item For every $i\in [1,n]$: $v_{\p}(b_i)>v_{\p}(b_0)$.
    \item $\exists \alpha\in \p:\; \frac{1}{\alpha}\in\OO_K[\gamma]$.
    \item For every $\q\in\Spec(\OO_{K[\gamma]})$: if $\q\cap \OO_K=\p$ then $v_{\q}(\gamma)<0$.
\end{itemize}
\end{theorem}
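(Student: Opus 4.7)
The plan is to prove the equivalence by the cyclic chain $(1)\Rightarrow (2)\Rightarrow (3)\Rightarrow (1)$.

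For $(1)\Rightarrow (2)$ I will construct an explicit element of $\OO_K[\gamma]\cap K$ with a pole at $\p$ and then extract the required inverse. Set $m=-v_\p(b_0)$, positive by (1) applied to $b_n=1$, and let
\[
 \mathfrak{D} = \{c\in\OO_K : cb_i\in\OO_K \text{ for } 1\le i\le n-1\}.
\]
This is an ideal of $\OO_K$, and condition (1) forces its $\p$-exponent to be at most $m-1$, so I can pick $c\in\mathfrak{D}$ with $v_\p(c)\le m-1$. Multiplying $f_{K,\gamma}(\gamma)=0$ by $c$ and rearranging gives
\[
 cb_0 \;=\; -\bigl(c\gamma^n+cb_{n-1}\gamma^{n-1}+\cdots+cb_1\gamma\bigr),
\]
where every coefficient on the right lies in $\OO_K$, so $cb_0\in\OO_K[\gamma]\cap K$ while $v_\p(cb_0)=v_\p(c)-m\le -1$. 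Since $\OO_K[\gamma]\cap K$ is an overring of the Dedekind domain $\OO_K$, it equals $S^{-1}\OO_K$ for some multiplicative set $S\subseteq\OO_K$; the $\p$-pole of $cb_0$ forces $S\cap\p\ne\emptyset$, and any $\alpha\in S\cap\p$ satisfies $1/\alpha\in\OO_K[\gamma]$.

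For $(2)\Rightarrow (3)$: writing $1/\alpha=\sum_i c_i\gamma^i$ with $c_i\in\OO_K$ and $\alpha\in\p$, and fixing $\q$ over $\p$, one has $v_\q(1/\alpha)=-v_\q(\alpha)<0$; but if $v_\q(\gamma)\ge 0$ then $v_\q(c_i\gamma^i)\ge 0$ for every $i$, so $v_\q(\sum c_i\gamma^i)\ge 0$ by the ultrametric inequality, a contradiction. For $(3)\Rightarrow (1)$, extend $v_\p$ to a valuation $w$ on an algebraic closure of $K_\p$. The conjugates $\gamma^{(1)},\ldots,\gamma^{(n)}$ of $\gamma$ over $K$ split into $g$ groups corresponding to the primes $\q_1,\ldots,\q_g$ of $\OO_{K[\gamma]}$ above $\p$, and within the group of $\q_j$ each conjugate has $w$-value $v_{\q_j}(\gamma)/e_j$, negative by (3). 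Vieta gives $b_{n-i}=(-1)^ie_i(\gamma^{(1)},\ldots,\gamma^{(n)})$ and $v_\p(b_0)=\sum_k w(\gamma^{(k)})$; for $1\le i<n$ the ultrametric inequality yields
\[
 v_\p(b_{n-i})\;\ge\;\min_{|S|=i}\sum_{k\in S}w(\gamma^{(k)})\;>\;\sum_{k=1}^n w(\gamma^{(k)})\;=\;v_\p(b_0),
\]
since omitting any nonempty subset of the negative $w(\gamma^{(k)})$'s strictly increases the sum; the case $i=n$ is $v_\p(b_n)=0>v_\p(b_0)$.

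The subtlest step is $(1)\Rightarrow (2)$: bridging from an element of $\OO_K[\gamma]\cap K$ with a $\p$-pole to the specific assertion that some $\alpha\in\p$ has $1/\alpha\in\OO_K[\gamma]$ uses the structure theorem that overrings of a Dedekind domain are localizations, or equivalently a B\'ezout identity after replacing $cb_0$ by a power whose denominator ideal is principal (invoking finiteness of the class group).
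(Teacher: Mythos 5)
This theorem is quoted from \cite{Paper 1} (Theorem~1.3 there) and is not proved anywhere in the present paper, so there is no in-paper proof to compare your argument against. That said, your cyclic chain $(1)\Rightarrow(2)\Rightarrow(3)\Rightarrow(1)$ is correct. In $(1)\Rightarrow(2)$, clearing only the denominators of $b_1,\dots,b_{n-1}$ at the cost of $v_\p(c)\le m-1$ indeed puts $cb_0 \in \OO_K[\gamma]\cap K$ with $v_\p(cb_0)\le -1$, and from there extracting an $\alpha\in\p$ with $1/\alpha\in\OO_K[\gamma]$ requires exactly what you flag: either the fact that overrings of a number ring are localizations, or the trick of passing to a power whose denominator ideal is a principal power of $\p$ --- both of which use finiteness (or at least torsion) of $\Cl(\OO_K)$, which is \emph{not} a general Dedekind-domain fact but does hold here. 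The $(2)\Rightarrow(3)$ ultrametric argument and the $(3)\Rightarrow(1)$ Newton-polygon estimate on $v_\p(b_{n-i})=w\big(e_i(\gamma^{(1)},\dots,\gamma^{(n)})\big)$ using a fixed extension $w$ of $v_\p$ to $\overline{K_\p}$ are both sound (since all $w(\gamma^{(k)})<0$, dropping any nonempty subset of them strictly increases the sum). The only blemish is a harmless indexing slip in your last sentence: the remaining case $v_\p(b_n)=0>v_\p(b_0)$ corresponds to $i=0$ in your $b_{n-i}$ notation, not $i=n$.
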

Singhal and Lin \cite{Paper 1} define $X(K,\gamma)$ to be the set of prime ideals of $\Spec(\OO_K)$ that satisfy any of the equivalent conditions of Theorem \ref{X cond equiv}. They also note that
$$X(\Q,\gamma)=\{p\in\Spec(\Z)\mid p\mid e(\gamma)\}.$$
They characterise the ring $\OO_K[\gamma]\cap K$ in terms of $X(K,\gamma)$ as follows, thereby generalizing Theorem \ref{Thm Z[gamma] cap Q}.


\begin{theorem}\cite[Proposition 1.6]{Paper 1}\label{Cond for Ok[gamma]}
For $\alpha\in K$, we have $\alpha\in \OO_K[\gamma]$ if and only if
$$\{\p\in\Spec(\OO_K)\mid v_{\p}(\alpha)<0\}\subseteq X(K,\gamma).$$    
\end{theorem}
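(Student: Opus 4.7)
My plan is to prove both implications separately, invoking different equivalent formulations of $\p \in X(K,\gamma)$ from Theorem~\ref{X cond equiv}. For the reverse direction ($\Leftarrow$), I would suppose $\{\p : v_\p(\alpha) < 0\} \subseteq X(K,\gamma)$. This set is finite; enumerate it as $\p_1, \dots, \p_s$. By the second equivalent condition of Theorem~\ref{X cond equiv}, for each $i$ I may choose $\beta_i \in \p_i$ with $1/\beta_i \in \OO_K[\gamma]$. Setting $a_i = -v_{\p_i}(\alpha) \geq 1$, I would form $c := \alpha \prod_{i=1}^{s}\beta_i^{a_i}$ and verify $c \in \OO_K$ prime-by-prime: at any $\p \notin \{\p_1,\dots,\p_s\}$ both $\alpha$ and each $\beta_i$ have non-negative valuation, while at $\p_j$ the estimate $v_{\p_j}(\beta_j) \geq 1$ yields $a_j v_{\p_j}(\beta_j) \geq a_j$, which compensates for $v_{\p_j}(\alpha) = -a_j$. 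The identity $\alpha = c \cdot \prod_{i=1}^{s}(1/\beta_i)^{a_i}$ then exhibits $\alpha$ as a product of elements of the ring $\OO_K[\gamma]$, so $\alpha \in \OO_K[\gamma]$.

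For the forward direction ($\Rightarrow$), I would argue by contradiction: suppose $\alpha \in \OO_K[\gamma]$ and there is a prime $\p$ with $v_\p(\alpha) < 0$ yet $\p \notin X(K,\gamma)$. Negating the third equivalent condition of Theorem~\ref{X cond equiv} produces some $\q \in \Spec(\OO_{K[\gamma]})$ lying over $\p$ with $v_\q(\gamma) \geq 0$. Inside the DVR $\OO_{K[\gamma],\q}$ we then have both $\OO_K \subseteq \OO_{K[\gamma]} \subseteq \OO_{K[\gamma],\q}$ and $\gamma$, so this ring contains all of $\OO_K[\gamma]$, and in particular $\alpha$. Therefore $v_\q(\alpha) \geq 0$; but $v_\q|_K$ is a positive integer multiple of $v_\p$, which forces $v_\p(\alpha) \geq 0$ and contradicts the hypothesis.

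The main subtlety lies in the forward direction: since $\gamma$ is an algebraic number but not necessarily an algebraic integer, it need not lie in $\OO_{K[\gamma]}$, so one cannot blindly treat $\OO_K[\gamma]$ as a subring of the ring of integers of $K[\gamma]$. What makes the argument go through is that the valuation $v_\q$ is defined on all of $K[\gamma]^{\times}$ and the DVR $\OO_{K[\gamma],\q}$ is precisely $\{x \in K[\gamma] : v_\q(x) \geq 0\}$; the condition $v_\q(\gamma) \geq 0$ coming from the third equivalent formulation is exactly what is needed to place $\gamma$ inside this local ring and derive the contradiction. No analogous local-global subtlety arises in the reverse direction, because the proof there explicitly constructs an element of $\OO_K[\gamma]$ equal to $\alpha$.
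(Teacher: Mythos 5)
The paper does not prove this statement itself---it is cited verbatim from \cite[Proposition 1.6]{Paper 1}, so there is no local proof to compare yours against. That said, your argument is correct and self-contained given Theorem~\ref{X cond equiv}. The reverse direction is sound: the element $c=\alpha\prod_i\beta_i^{a_i}$ has non-negative valuation at every prime of $\OO_K$ (at $\p_j$ the contribution $a_j v_{\p_j}(\beta_j)\geq a_j$ cancels $v_{\p_j}(\alpha)=-a_j$, and the cross terms $v_{\p_j}(\beta_i)\geq 0$ for $i\neq j$ only help), so $c\in\OO_K$ and $\alpha=c\prod_i(1/\beta_i)^{a_i}\in\OO_K[\gamma]$. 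The forward direction correctly locates the point that needs care---$\gamma$ need not lie in $\OO_{K(\gamma)}$---and resolves it by observing that the negation of the third condition hands you a $\q$ over $\p$ with $v_\q(\gamma)\geq 0$, placing $\gamma$ (hence all of $\OO_K[\gamma]$, hence $\alpha$) in the DVR $\OO_{K(\gamma),\q}$, from which $v_\p(\alpha)\geq 0$ follows via $v_\q|_K=e(\q/\p)\,v_\p$. No gaps.
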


We call a subset $X\subseteq\Spec(\OO_K)$ stable, if given any $\p,\p'\in \OO_K$ with $\p\cap\Z=\p'\cap \Z$, either both $\p,\p'\in X$ or neither of them is in $X$. For example if $K=\Q[i]$ and $\gamma=\frac{1}{2+i}$ then $X(K,\gamma)=\{(2+i)\}$, which is not stable.

\begin{proposition}\label{Stable prob 1}
Given a number field $K$ and integer $n\geq 2$, we have
$$\PPP_n[X(K,\gamma)\text{ is stable }]=1.$$
\end{proposition}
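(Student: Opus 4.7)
The plan is to apply Theorem~\ref{K cap Qgamma is Q} not to $K$ itself but to its Galois closure $\widetilde{K}$ over $\Q$. Since $\widetilde{K}$ is a number field, the theorem yields $\PPP_n[\widetilde{K}\cap\Q(\gamma)=\Q]=1$, so it is enough to establish stability on the full-measure event $\widetilde{K}\cap\Q(\gamma)=\Q$. This step is crucial: applying Theorem~\ref{K cap Qgamma is Q} directly to $K$ would not suffice (for instance, $K=\Q(\sqrt[3]{2})$ with $\gamma=\omega\sqrt[3]{2}$ satisfies $K\cap\Q(\gamma)=\Q$ yet $x^3-2$ is reducible over $K$, splitting off the factor $x-\sqrt[3]{2}$).

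Next I would use the fact that, because $\widetilde{K}/\Q$ is Galois, the condition $\widetilde{K}\cap\Q(\gamma)=\Q$ is equivalent to $\widetilde{K}$ and $\Q(\gamma)$ being linearly disjoint over $\Q$. In particular $[\widetilde{K}(\gamma):\widetilde{K}]=n$, so $F_\gamma$ is irreducible over $\widetilde{K}$ and \emph{a fortiori} over $K$. Consequently $f_{K,\gamma}(x)=F_\gamma(x)/a_n$, and every coefficient $b_i=a_i/a_n$ lies in~$\Q$.

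To finish, for any $\p\in\Spec(\OO_K)$ lying over a rational prime $p$ with ramification index $e(\p/p)$, and any $r\in\Q$, one has $v_\p(r)=e(\p/p)\,v_p(r)$. Hence the criterion $v_\p(b_i)>v_\p(b_0)$ for $i\in[1,n]$ from Theorem~\ref{X cond equiv} collapses to $v_p(b_i)>v_p(b_0)$, a condition depending only on $p=\p\cap\Z$; stability of $X(K,\gamma)$ follows immediately. There is no substantive obstacle in the argument beyond the Galois-closure trick in the first paragraph, which is the whole point: it is precisely the Galois property of $\widetilde{K}$ that promotes trivial intersection into linear disjointness, which is the property actually needed to force $f_{K,\gamma}\in\Q[x]$.
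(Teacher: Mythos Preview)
Your argument is correct, but it differs from the paper's route. The paper applies Theorem~\ref{K cap Qgamma is Q} to $K$ itself and then invokes Lemma~\ref{going up going down} (quoted from the companion paper~\cite{Paper 1}): on the event $K\cap\Q(\gamma)=\Q$ that lemma gives directly $X(K,\gamma)=\{\p\in\Spec(\OO_K)\mid \p\cap\Z\in X(\Q,\gamma)\}$, which is visibly stable. In particular your cautionary example $K=\Q(\sqrt[3]{2})$, $\gamma=\omega\sqrt[3]{2}$ is no obstruction to \emph{their} proof, because Lemma~\ref{going up going down} does not go through the coefficients of $f_{K,\gamma}$ at all; it only needs the hypothesis $K\cap\Q(\gamma)=\Q$.

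Your route trades that black box for a self-contained argument: by passing to the Galois closure $\widetilde K$ you force $F_\gamma$ to be irreducible over $K$ on a full-measure event, so $f_{K,\gamma}=F_\gamma/a_n\in\Q[x]$, and then stability is read off from the valuation criterion in Theorem~\ref{X cond equiv}. This is slightly longer but more elementary, and it actually yields a stronger intermediate statement (namely $\PPP_n[f_{K,\gamma}\in\Q[x]]=1$) that the paper's one-line proof does not make explicit. The Galois-closure trick is genuinely needed for your approach, exactly for the reason your example illustrates; the paper avoids it only because Lemma~\ref{going up going down} has already absorbed the relevant work.
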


We define constants $\alpha_{p,n}=\frac{1-\frac{1}{p^n}}{1-\frac{1}{p^{n+1}}}$ and $\beta_{p,n}=\frac{1}{p^n}\frac{1-\frac{1}{p}}{1-\frac{1}{p^{n}}}$. It should be noted that $0<\alpha_{p,n},\beta_{p,n}<1$, $\alpha_{p,n}(1+\beta_{p,n})=1$ and $\frac{\zeta(n+1)}{\zeta(n)}=\prod_{p}\alpha_{p,n}$.
For a given number field $K$, and finite stable subset $X\subseteq \Spec(\OO_K)$ we compute the probability that $X(K,\gamma)=X$.

\begin{proposition}\label{Prob of X(K,gamma)}
Given $n\geq 2$, a number field $K$ and a finite, stable subset $X\subseteq \Spec(\OO_K)$. Let $X_1=\{\p\cap\Z\mid \p\in X\}$, then we have
$$\PPP_n\big[X(K,\gamma)=X\big]
    =\frac{\zeta(n+1)}{\zeta(n)}\prod_{p\in X_1}\beta_{p,n}.$$
\end{proposition}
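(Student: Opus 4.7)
My plan is to reduce to the rational case $K=\Q$ and then invoke Proposition \ref{prob e=k}. Combining Theorems \ref{Galois group Sn} and \ref{K cap Qgamma is Q}, I would argue that it suffices to work on the almost-sure event $[K(\gamma):K]=n$; under this assumption, $f_{K,\gamma}(x)=F_\gamma(x)/a_n$, so $b_i=a_i/a_n$ for every $i$ (with $b_n=1=a_n/a_n$).

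Next, on this event I would show that the condition $\p\in X(K,\gamma)$ depends only on $p:=\p\cap\Z$ and is equivalent to $p\in X(\Q,\gamma)$. By the first bullet of Theorem \ref{X cond equiv} together with the identity $v_\p=e(\p/p)\,v_p$ on $\Q^\times$, the inequality $v_\p(b_i)>v_\p(b_0)$ for all $i\in[1,n]$ rewrites as $v_p(a_i)>v_p(a_0)$ for all $i\in[1,n]$; taking $i=n$ and invoking $\gcd(a_0,\dots,a_n)=1$ forces $v_p(a_0)=0$ and $p\mid a_i$ for each $i\in[1,n]$, i.e.\ $p\mid e(\gamma)$. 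Consequently $X(K,\gamma)=\{\p\in\Spec(\OO_K):\p\cap\Z\in X(\Q,\gamma)\}$ almost surely, and since $X$ is stable with $X=\{\p:\p\cap\Z\in X_1\}$, the events $\{X(K,\gamma)=X\}$ and $\{X(\Q,\gamma)=X_1\}$ coincide up to a null set.

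It then suffices to compute the probability that $e(\gamma)$ has prime support exactly $X_1$. Partitioning by the value of $e(\gamma)$ and applying Proposition \ref{prob e=k},
\begin{align*}
\PPP_n[X(\Q,\gamma)=X_1]
&=\frac{\zeta(n+1)}{\zeta(n)}\sum_{\substack{k\geq 1\\ \operatorname{supp}(k)=X_1}}\frac{\phi(k)}{k^{n+1}}\\
&=\frac{\zeta(n+1)}{\zeta(n)}\prod_{p\in X_1}\sum_{e\geq 1}\frac{\phi(p^e)}{p^{e(n+1)}},
\end{align*}
and a short geometric-series computation using $\phi(p^e)=p^{e-1}(p-1)$ identifies each inner sum with $\beta_{p,n}$.

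The main obstacle is the initial reduction to $[K(\gamma):K]=n$: Theorem \ref{K cap Qgamma is Q} alone is not enough (for instance $K=\Q(\sqrt[3]{2})$ with $\gamma=\omega\sqrt[3]{2}$ satisfies $K\cap\Q(\gamma)=\Q$ yet $[K(\gamma):K]=2$), and one must combine it with Theorem \ref{Galois group Sn} to argue that almost surely the Galois closure of $\Q(\gamma)$ is linearly disjoint from $K$ over $\Q$; after that, the remaining calculation is routine.
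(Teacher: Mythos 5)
Your translation of the first bullet of Theorem~\ref{X cond equiv} into the condition $p\mid e(\gamma)$, and the final geometric-series evaluation, are both correct, \emph{assuming} that $f_{K,\gamma}(x)=F_\gamma(x)/a_n$, i.e.\ that $[K(\gamma):K]=n$. The genuine gap is exactly where you flagged it: you need that reduction to hold almost surely, and the statement you would invoke for it --- that the Galois closure $L$ of $\Q(\gamma)$ is linearly disjoint from $K$ over $\Q$, $\PPP_n[L\cap K=\Q]=1$ --- is strictly stronger than Theorem~\ref{K cap Qgamma is Q} and is not proved in the paper (for $n\geq 3$, $L\supsetneq \Q(\gamma)$, so $L\cap K$ can be strictly larger than $\Q(\gamma)\cap K$). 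Establishing it would require controlling, for every nontrivial subfield $K'\subseteq K$, the probability that $K'$ embeds in $L$; the paper's proof of Theorem~\ref{K cap Qgamma is Q} (its induction, Lemma~\ref{n> deg+1}, and the discriminant Lemma~\ref{Lemma: Disc=my^2}) only handles subfields of $\Q(\gamma)$ itself, not arbitrary intermediate fields of $L$. Some of this can be patched for small or large index via the structure of subgroups of $S_n$, but the argument is not ``routine'' and you do not supply it.

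The paper sidesteps this entirely. It invokes Lemma~\ref{going up going down} (Lemma~3.1 of~\cite{Paper 1}) with base field $\Q$ and extension $K$: as soon as $K\cap \Q(\gamma)=\Q$ one has
$$X(K,\gamma)=\{\p\in\Spec(\OO_K)\mid \p\cap\Z\in X(\Q,\gamma)\},$$
with no assumption at all on $[K(\gamma):K]$. That lemma is proved internally using the third bullet of Theorem~\ref{X cond equiv} (the valuation condition on primes of $\OO_{K(\gamma)}$ above $\p$), which transitions between base fields cleanly; working with coefficients of minimal polynomials, as you do, is what forces you into the degree issue in the first place. So: import Lemma~\ref{going up going down}, condition on the event $K\cap\Q(\gamma)=\Q$ (probability $1$ by Theorem~\ref{K cap Qgamma is Q}), and then your computation of $\PPP_n[X(\Q,\gamma)=X_1]$ via Proposition~\ref{prob e=k} goes through and gives the result.
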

We also study the distribution of the rings $\OO_K[\gamma]\cap K$.
Note that if $k_1$ and $k_2$ are positive integers with the same prime factors then $\OO_K[\frac{1}{k_1}]=\OO_K[\frac{1}{k_2}]$.

\begin{theorem}\label{Ok[gamma] probability}
Given a number field $K$ and positive integers $n\geq 2$, $k\geq 1$, we have
$$\PPP_n\Big[\OO_K[\gamma]\cap K=\OO_K\Big[\frac{1}{k}\Big]\Big]
=\frac{\zeta(n+1)}{\zeta(n)}\prod_{p|k}\beta_{p,n}.$$
\end{theorem}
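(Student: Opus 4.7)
The plan is to reduce the statement to Proposition~\ref{Prob of X(K,gamma)} by identifying the unique set $X\subseteq\Spec(\OO_K)$ for which $X(K,\gamma)=X$ forces $\OO_K[\gamma]\cap K = \OO_K[1/k]$. Concretely, I would let
$$Y_k \;=\; \{\p\in\Spec(\OO_K)\mid \p\cap\Z \text{ divides }k\}.$$
This is a finite, stable subset of $\Spec(\OO_K)$, and the set of rational primes beneath its elements is $(Y_k)_1 = \{p\text{ prime}\mid p\mid k\}$.

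First I would show the equivalence $\OO_K[\gamma]\cap K = \OO_K[1/k] \iff X(K,\gamma)=Y_k$. For the ``$\Leftarrow$'' direction, I would apply Theorem~\ref{Cond for Ok[gamma]}: the ring $\OO_K[\gamma]\cap K$ consists of exactly those $\alpha\in K$ whose negative-valuation primes lie in $X(K,\gamma)=Y_k$, which is precisely the characterization of $\OO_K[1/k]$ as the subring of $K$ in which only the primes above $p\mid k$ are inverted. For the ``$\Rightarrow$'' direction, I would argue by contradiction: if some $\p\in X(K,\gamma)\setminus Y_k$, then Theorem~\ref{X cond equiv} produces $\alpha\in\p$ with $1/\alpha\in\OO_K[\gamma]\cap K = \OO_K[1/k]$, yet $v_\p(1/\alpha)<0$ with $\p\cap\Z\nmid k$, a contradiction; conversely, if some $\p\in Y_k\setminus X(K,\gamma)$, then the element $1/p$ (for $p=\p\cap\Z$) lies in $\OO_K[1/k]$ but fails the membership criterion in Theorem~\ref{Cond for Ok[gamma]} for $\OO_K[\gamma]$, again a contradiction.

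Having established this equivalence, the theorem reduces to computing $\PPP_n[X(K,\gamma)=Y_k]$. I would then invoke Proposition~\ref{Prob of X(K,gamma)} directly with $X=Y_k$, obtaining
$$\PPP_n\big[X(K,\gamma)=Y_k\big] \;=\; \frac{\zeta(n+1)}{\zeta(n)}\prod_{p\in (Y_k)_1}\beta_{p,n} \;=\; \frac{\zeta(n+1)}{\zeta(n)}\prod_{p\mid k}\beta_{p,n},$$
which matches the claimed density.

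There is no serious obstacle here beyond the set-theoretic bookkeeping in the equivalence above; all the probabilistic and ring-theoretic heavy lifting has already been packaged into Theorems~\ref{X cond equiv} and~\ref{Cond for Ok[gamma]} and Proposition~\ref{Prob of X(K,gamma)}. The only point that requires attention is verifying that $Y_k$ really is finite and stable (finiteness because $k$ has finitely many prime divisors, each with finitely many primes of $\OO_K$ above it; stability by construction), so that Proposition~\ref{Prob of X(K,gamma)} applies cleanly.
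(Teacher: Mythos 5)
Your proposal is correct and takes essentially the same route as the paper: reduce to Proposition~\ref{Prob of X(K,gamma)} with the stable set $X=\{\p\in\Spec(\OO_K)\mid \p\cap\Z \text{ divides } k\}$ by proving the equivalence $\OO_K[\gamma]\cap K = \OO_K[1/k] \iff X(K,\gamma)=X$. The paper organizes that equivalence a bit more economically: it sets $\gamma_1=1/k$, reads off $X(K,\gamma_1)=X$ immediately from Theorem~\ref{X cond equiv} applied to the minimal polynomial $x-\frac{1}{k}$, and then notes that Theorem~\ref{Cond for Ok[gamma]} makes $\OO_K[\gamma]\cap K$ a function of $X(K,\gamma)$ alone, so $\OO_K[\gamma]\cap K = \OO_K[\gamma_1]\cap K$ iff $X(K,\gamma)=X(K,\gamma_1)$. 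Your direct two-case contradiction argument (using condition two of Theorem~\ref{X cond equiv} in one direction and the membership criterion of Theorem~\ref{Cond for Ok[gamma]} in the other) establishes the same equivalence, just with slightly more bookkeeping.
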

It should be noted that the probability does not depend on the number field $K$.

\begin{corollary}
Given a number field $K$ and $n\geq 2$, we have
$$\PPP_n\big[ \OO_K[\gamma]\cap K=\OO_K\big]=\frac{\zeta(n+1)}{\zeta(n)}.$$
\end{corollary}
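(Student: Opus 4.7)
The plan is to derive this immediately from Theorem \ref{Ok[gamma] probability} by specializing $k=1$. Since $\OO_K[1/1]=\OO_K$, taking $k=1$ in that theorem gives
$$\PPP_n\big[\OO_K[\gamma]\cap K=\OO_K\big]=\frac{\zeta(n+1)}{\zeta(n)}\prod_{p\mid 1}\beta_{p,n},$$
and the product over primes dividing $1$ is empty, hence equal to $1$. So the whole argument is a one-line specialization.

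As a sanity check, I would also verify the statement through Proposition \ref{Prob of X(K,gamma)}: by Theorem \ref{Cond for Ok[gamma]}, the condition $\OO_K[\gamma]\cap K=\OO_K$ is equivalent to there being no $\alpha\in K$ with $\alpha\in\OO_K[\gamma]$ and negative $\p$-adic valuation at some $\p$, which (again by Theorem \ref{Cond for Ok[gamma]}) is equivalent to $X(K,\gamma)=\emptyset$. The empty set is trivially stable, so Proposition \ref{Prob of X(K,gamma)} with $X=\emptyset$ (hence $X_1=\emptyset$) yields $\PPP_n[X(K,\gamma)=\emptyset]=\frac{\zeta(n+1)}{\zeta(n)}$, matching the claim.

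Since the corollary is purely a substitution, there is no genuine obstacle; the only thing worth spelling out is the translation between $\OO_K[\gamma]\cap K=\OO_K$ and the case $k=1$ (equivalently $X(K,\gamma)=\emptyset$). A complete write-up can therefore be one or two sentences, pointing at Theorem \ref{Ok[gamma] probability} and noting the convention that the empty product equals $1$.
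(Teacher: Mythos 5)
Your proof is correct and is exactly the intended route: since the paper states this as an unproved corollary immediately after Theorem \ref{Ok[gamma] probability}, the specialization $k=1$ (with the empty product over $p\mid 1$ equal to $1$) is precisely what the authors have in mind. Your cross-check via Proposition \ref{Prob of X(K,gamma)} with $X=\emptyset$ is a harmless and correct double-verification.
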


Next, we show that given a rational prime $p$, the probability that all primes of $\OO_K$ above $p$ are in $X(K,\gamma)$ is $\alpha_{p.n}\beta_{p,n}$. Moreover, for distinct rational primes, these events are independent.
\begin{proposition}\label{Prop prob X subset X(K gamma)}
Given a number field $K$, a finite subset $X\subseteq \Spec(\OO_K)$ and $n\geq 2$.
Let $Y=\{\p\cap\Z\mid \p\in X\}$. Then
$$\PPP_n[X\subseteq X(K,\gamma)]=\prod_{p\in Y}\alpha_{p,n}\beta_{p,n}.$$
\end{proposition}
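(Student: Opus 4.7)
The plan is to reduce the problem to Proposition \ref{Prob of X(K,gamma)} by summing over all stable finite supersets of $X$, and then to collapse the resulting Euler product via the identity $\alpha_{p,n}(1+\beta_{p,n})=1$.

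First, by Proposition \ref{Stable prob 1}, the set $X(K,\gamma)$ is almost surely stable. Consequently, if $\hat{X}$ denotes the smallest stable subset containing $X$ (that is, the union of all primes of $\OO_K$ lying over rational primes in $Y$), then the events $\{X\subseteq X(K,\gamma)\}$ and $\{\hat{X}\subseteq X(K,\gamma)\}$ almost surely coincide and hence have the same density. So I may replace $X$ by $\hat{X}$ and assume $X$ itself is stable.

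Next, stable finite subsets of $\Spec(\OO_K)$ correspond bijectively to finite subsets of $\Spec(\Z)$ via $X'\mapsto\{\p\cap\Z\mid \p\in X'\}$. Partitioning $\{X\subseteq X(K,\gamma)\}$ according to the value of $X(K,\gamma)$ and applying Proposition \ref{Prob of X(K,gamma)}, I would write
$$\PPP_n[X\subseteq X(K,\gamma)]=\sum_{\substack{T\subseteq\Spec(\Z)\setminus Y\\T\text{ finite}}}\frac{\zeta(n+1)}{\zeta(n)}\prod_{p\in Y\cup T}\beta_{p,n}=\frac{\zeta(n+1)}{\zeta(n)}\prod_{p\in Y}\beta_{p,n}\cdot\prod_{p\notin Y}(1+\beta_{p,n}),$$
using the generating-function identity $\sum_{T}\prod_{p\in T}\beta_{p,n}=\prod_{p\notin Y}(1+\beta_{p,n})$, whose convergence is ensured by $\beta_{p,n}=O(p^{-n})$ with $n\geq 2$. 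Finally, substituting $\zeta(n+1)/\zeta(n)=\prod_p\alpha_{p,n}$ and $1+\beta_{p,n}=1/\alpha_{p,n}$ causes the $\alpha_{p,n}$ factors for $p\notin Y$ to cancel, leaving exactly $\prod_{p\in Y}\alpha_{p,n}\beta_{p,n}$.

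The main technical obstacle lies in justifying the interchange of the density $\PPP_n$, itself defined as a limit of ratios over $\A_n(H)$ as $H\to\infty$, with the infinite summation over $T$. I would handle this by a truncation argument: for each $N$, restrict $T$ to subsets of $\{p\leq N\}$ so that the identity becomes a finite sum valid by finite additivity of density; then let $N\to\infty$ and control the tail event that $X(K,\gamma)$ meets some prime of $\OO_K$ lying above a rational prime $p>N$. This tail probability is bounded by $\sum_{p>N}\PPP_n[p\in X(\Q,\gamma)]$, which tends to $0$ since the corresponding contributions are of order $\beta_{p,n}=O(p^{-n})$ and are summable for $n\geq 2$.
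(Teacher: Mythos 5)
Your proposal is correct in outline, but it takes a somewhat different (and more laborious) path than the paper. The paper proves a single general interchange result, Lemma~\ref{Lem X(K gamma) in fancy X}, which says that for \emph{any} collection $\X\subseteq\Pf(\Spec(\OO_K))$ one has $\PPP_n[X(K,\gamma)\in\X]=\frac{\zeta(n+1)}{\zeta(n)}\sum_{Y\in\Y}\prod_{p\in Y}\beta_{p,n}$, and the convergence issue is settled once and for all in Lemma~\ref{lem count e in Y} via a uniform-in-$Y$ error term $O_n(H^n\log^2H)$. Proposition~\ref{Prop prob X subset X(K gamma)} then drops out by taking $\X=\{Y: X\subseteq Y\}$ and citing the already-computed $\Q$-case (Proposition~\ref{prop p in X(Q gamma) independent}). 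Your approach instead sums $\PPP_n[X(K,\gamma)=X']$ over all stable finite supersets $X'\supseteq\hat X$ using Proposition~\ref{Prob of X(K,gamma)} and re-derives the interchange by a truncation argument, which buys nothing the paper doesn't already have in general form; you end up re-proving a special case of Lemma~\ref{Lem X(K gamma) in fancy X}.

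Two points deserve care in your truncation step. First, the bound on the tail should be carried out at the level of the counting functions $\#\{\gamma\in\A_n(H):\cdots\}$ \emph{before} dividing by $\#\A_n(H)$ and passing to the limit; writing ``$\leq\sum_{p>N}\PPP_n[p\in X(\Q,\gamma)]$'' is unavailable as stated because the density $\PPP_n$ is only finitely additive. Concretely: after discarding the density-zero set where $\Q(\gamma)\cap K\neq\Q$, the tail event forces some prime $p$ with $N<p\leq H$ to divide $e(\gamma)$; one bounds $\#\{\gamma\in\A_n(H):p\mid e(\gamma)\}$ by $O\big(H(H/p+1)^n\big)$, sums over $N<p\leq H$, and obtains an expression that is $o(H^{n+1})$ uniformly as $H\to\infty$ and that vanishes as $N\to\infty$. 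With that fix the argument is sound. Second, the stabilization step (replacing $X$ by $\hat X$) is valid via Proposition~\ref{Stable prob 1}, but note it is redundant: the desired formula already reads off $Y=\{\p\cap\Z:\p\in X\}$, so the paper simply works directly with $Y$ and never needs to mention $\hat X$.
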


For a number field $K$ and rational prime $p$, we denote the number of primes of $\OO_K$ above $p$ as $r_K(p)$.
We study the statistics of the size of the set $X(K,\gamma)$.

\begin{proposition}\label{Prop exp and var of size of X(K,gamma)}
Given a number field $K$ and $n\geq 2$, we have
$$\E_n[|X(K,\gamma)|]=\sum_{p}\alpha_{p,n}\beta_{p,n}r_K(p),\;\;\text{Var}_n\;[|X(K,\gamma)|]=\sum_{p}\alpha_{p,n}\beta_{p,n}(1-\alpha_{p,n}\beta_{p,n}) r_K(p)^2.$$
\end{proposition}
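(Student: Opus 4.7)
The plan is to write $|X(K,\gamma)|$ as a sum of indicator random variables indexed by the rational primes, apply Proposition \ref{Prop prob X subset X(K gamma)} to each term, and exploit the independence across distinct primes to compute the second moment.

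\textbf{Step 1 (Indicator decomposition).} By Proposition \ref{Stable prob 1}, $X(K,\gamma)$ is stable almost surely, so for each rational prime $p$ and each $\p\in\Spec(\OO_K)$ above $p$, the event $\{\p\in X(K,\gamma)\}$ coincides almost surely with the event $Y_p:=\{\p'\in X(K,\gamma) \text{ for every } \p'\mid p\}$. Consequently
\[
|X(K,\gamma)|=\sum_{p \text{ prime}} r_K(p)\,\mathbf{1}_{Y_p}
\qquad \text{almost surely.}
\]
Applying Proposition \ref{Prop prob X subset X(K gamma)} to $X=\{\p:\p\mid p\}$ (so $Y=\{p\}$) gives $\PPP_n[Y_p]=\alpha_{p,n}\beta_{p,n}$, which by linearity yields the claimed expectation
\[
\E_n[|X(K,\gamma)|]=\sum_{p}\alpha_{p,n}\beta_{p,n} r_K(p).
\]

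\textbf{Step 2 (Second moment via independence).} Squaring the decomposition and taking expectations,
\[
\E_n\big[|X(K,\gamma)|^{2}\big]=\sum_{p,q}r_K(p)r_K(q)\,\PPP_n[Y_p\cap Y_q].
\]
For $p=q$ we have $\PPP_n[Y_p]=\alpha_{p,n}\beta_{p,n}$. For $p\neq q$, another application of Proposition \ref{Prop prob X subset X(K gamma)} to $X=\{\p:\p\mid p\text{ or }\p\mid q\}$ shows that $\PPP_n[Y_p\cap Y_q]=\alpha_{p,n}\beta_{p,n}\alpha_{q,n}\beta_{q,n}$, so the events $Y_p$ for distinct rational primes are pairwise independent. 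Subtracting $\E_n[|X(K,\gamma)|]^{2}=\sum_{p,q}r_K(p)r_K(q)\alpha_{p,n}\beta_{p,n}\alpha_{q,n}\beta_{q,n}$ kills all cross terms and leaves only the diagonal, giving
\[
\text{Var}_n[|X(K,\gamma)|]=\sum_{p}\alpha_{p,n}\beta_{p,n}(1-\alpha_{p,n}\beta_{p,n})r_K(p)^{2}.
\]

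\textbf{Step 3 (Convergence / exchanging limits).} The main technical obstacle is that $\PPP_n$ is defined as a limit over $H\to\infty$, and the sums above are infinite, so I cannot blindly swap summation with the limit. To handle this I plan to truncate at a parameter $M$, bound $|X(K,\gamma)|$ and $|X(K,\gamma)|^{2}$ restricted to primes $p>M$ uniformly in $H$, and let $M\to\infty$. The uniform tail bound should follow from the identity $X(\Q,\gamma)=\{p\mid p\mid e(\gamma)\}$ together with $e(\gamma)\le H(\gamma)$: for any prime $p\leq H$, a direct count (essentially Proposition \ref{prob e=k} restricted to multiples of $p$, or the elementary estimate for the density of $\gamma\in\A_n(H)$ with $p\mid e(\gamma)$) yields $\PPP_n[p\mid e(\gamma)]\leq C p^{-n}$ with $C$ independent of $H$, and $r_K(p)\leq [K:\Q]$, so $\sum_{p>M}r_K(p)^{2}\PPP_n[p\in X(\Q,\gamma)]\to 0$ as $M\to\infty$ because $n\geq 2$. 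This dominated-convergence argument legitimizes the term-by-term computations in Steps 1 and 2 and completes the proof.
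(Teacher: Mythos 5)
Your decomposition $|X(K,\gamma)|=\sum_p r_K(p)\mathbf 1_{Y_p}$ together with the pairwise independence supplied by Proposition \ref{Prop prob X subset X(K gamma)} is a genuinely different route from the paper's. The paper packages all of this into the infinite product $f_{K,n}(z)=\frac{\zeta(n+1)}{\zeta(n)}\prod_p\big(1+z^{r_K(p)}\beta_{p,n}\big)$, proves $f_{K,n}(z)=\sum_t\PPP_n[|X(K,\gamma)|=t]\,z^t$ (Proposition \ref{Series expansion of fKn}), and then reads off every moment simultaneously from $f_{K,n}(e^x)$ (Corollary \ref{Cor: moments}); once Proposition \ref{Prop prob size X(K,gamma) is t} is in hand this is pure manipulation of an entire function, with no further $H\to\infty$ limits, and the independence you invoke is absorbed into the factorization of $f_{K,n}$ over $p$. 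Your route makes the shape of the variance (a sum of Bernoulli variances scaled by $r_K(p)^2$) immediately transparent, whereas the paper's generating-function framework is what it later reuses to get the growth-rate bounds of Theorem \ref{Thm bounds on prob size X(K, gamma) is t}.

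The part that needs tightening is Step 3. Your tail bound $\PPP_n[p\mid e(\gamma)]\ll p^{-n}$, uniform in $H$ for $p\le H$, is fine and does control the sum over large $p$. But the identity $|X(K,\gamma)|=\sum_p r_K(p)\mathbf 1_{Y_p}$ holds only on the set where $X(K,\gamma)$ is stable, and ``density zero'' by itself is not enough to discard the exceptional set inside $\lim_H\frac{1}{\#\A_n(H)}\sum_{\gamma}|X(K,\gamma)|$: on that set $|X(K,\gamma)|$ can be as large as $O(\log H)$, since every $\p\in X(K,\gamma)$ lies under a pole of $\gamma$ in $\OO_{K(\gamma)}$ and the norm of the denominator ideal is polynomially bounded in $H$. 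To close this you need the polynomial savings implicit in Section \ref{Sec: Prob galois theory} (the exceptional count is $O(H^{\,n+1-\delta})$ for some $\delta>0$), so that the exceptional contribution is $O(H^{-\delta}\log H)\to 0$. A cleaner repair is to avoid the $H$-limit altogether: define $\E_n$ and $\text{Var}_n$ via $\sum_t t\,\PPP_n[|X(K,\gamma)|=t]$, as the paper implicitly does, substitute Proposition \ref{Prop prob size X(K,gamma) is t}, and then your interchange of summations is simply a rearrangement of an absolutely convergent nonnegative double series over pairs $(Y,p)$ with $p\in Y\in\Pf(\Spec(\Z))$, needing no truncation or dominated-convergence argument.
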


\begin{theorem}\label{Thm bounds on prob size X(K, gamma) is t}
Given a number field $K$ and $n\geq 2$, let $\mu=\lfloor\frac{d}{n}\rfloor$. Given $\epsilon>0$, for sufficiently large $t$, we have
$$t^{-(1+\epsilon)nt} < \PPP_n[|X(K,\gamma)|=t]< t^{-\frac{1}{\mu+1+\epsilon}t}.$$
\end{theorem}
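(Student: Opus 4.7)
The plan is to first combine Propositions \ref{Stable prob 1} and \ref{Prob of X(K,gamma)} to pass from the probability to a combinatorial sum. Since $X(K,\gamma)$ is stable almost surely, and for stable $X$ with $X_1=\{\p\cap\Z:\p\in X\}$ one has $|X|=\sum_{p\in X_1}r_K(p)$, this yields
\[
\PPP_n\!\left[|X(K,\gamma)|=t\right] \;=\; \frac{\zeta(n+1)}{\zeta(n)}\sum_{\substack{X_1\subseteq\Spec(\Z)\\ \sum_{p\in X_1}r_K(p)=t}}\;\prod_{p\in X_1}\beta_{p,n}.
\]
Both inequalities then reduce to estimates of this restricted sum.

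\textbf{Lower bound.} I would apply the Chebotarev density theorem to the Galois closure of $K/\Q$ to produce an integer $r\in\{1,\dots,d\}$ and a positive density $\rho>0$ of rational primes $p$ with $r_K(p)=r$. Taking $X_1$ to consist of the $\lfloor t/r\rfloor$ smallest such primes, with one or two auxiliary primes (ramified or of a different Chebotarev class) added to enforce $\sum r_K(p)=t$ exactly, the PNT for Chebotarev classes gives $\sum_{p\in X_1}\log p\sim(t/r)\log(t/r)$. Together with $\beta_{p,n}\gg p^{-n}$, this one term alone contributes
\[
\prod_{p\in X_1}\beta_{p,n}\;\geq\;t^{-(1+\epsilon)nt/r}\;\geq\;t^{-(1+\epsilon)nt},
\]
using $r\geq 1$, which proves the lower bound.

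\textbf{Upper bound.} Using $\beta_{p,n}\leq p^{-n}$ and $|X_1|\geq\lceil t/d\rceil$ (from $r_K(p)\leq d$), the sum is bounded by
\[
\frac{\zeta(n+1)}{\zeta(n)}\sum_{m\geq \lceil t/d\rceil}e_m,\qquad e_m:=\sum_{|X_1|=m}\prod_{p\in X_1}p^{-n}.
\]
The key step is a sharp bound $e_m\ll (e/m^n)^m$ up to polynomial factors. I would obtain this via the generating function $F_n(z)=\prod_p(1+zp^{-n})$: a Mertens/PNT-type calculation gives $\log F_n(z)\sim nz^{1/n}$ as $z\to\infty$, and the standard saddle-point bound $e_m\leq F_n(z)z^{-m}$ at $z=m^n$ yields the claimed estimate. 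The resulting tail sum is dominated by $m=\lceil t/d\rceil$, producing $\PPP_n[|X(K,\gamma)|=t]\leq t^{-(nt/d)(1-\epsilon)}$. Because $\mu+1>d/n$ strictly, i.e.\ $n/d>1/(\mu+1)$, this is stronger than the desired bound $t^{-t/(\mu+1+\epsilon)}$ for all sufficiently large $t$.

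\textbf{Main obstacles.} The principal technical difficulty is obtaining the sharp saddle-point estimate for $e_m$: the elementary inequality $e_m\leq\zeta(n)^m/m!$ is far too weak and only produces an exponent of order $t/d$ instead of $nt/d$ in the upper bound, so one must exploit the full $p^{-n}$ decay through the generating function $F_n$. A secondary (genuine) subtlety on the lower-bound side is arithmetic: for some $K$ (e.g.\ $\Q(\sqrt 2,\sqrt 3)$, whose achievable $r_K(p)$ values lie in $\{2,4\}$) the probability $\PPP_n[|X(K,\gamma)|=t]$ vanishes for $t$ in an excluded arithmetic progression, so the theorem must be read for $t$ in the representable values, and the auxiliary-prime adjustment in the construction is what makes this possible.
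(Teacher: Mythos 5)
Your setup exactly reproduces the paper's Proposition~\ref{Prop prob size X(K,gamma) is t}, so both arguments run through the same combinatorial sum. The two routes diverge from there.

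\textbf{Upper bound.} The paper packages the coefficients $a_{K,n,t}=\PPP_n[|X(K,\gamma)|=t]$ as the Taylor coefficients of the entire function $f_{K,n}(z)=\frac{\zeta(n+1)}{\zeta(n)}\prod_p\bigl(1+z^{r_K(p)}\beta_{p,n}\bigr)$, shows this has genus $\mu=\lfloor d/n\rfloor$, and appeals to Boas's coefficient-decay formula $\lambda=\limsup_t \frac{t\log t}{\log|a_{K,n,t}|^{-1}}$ for the order $\lambda\in[\mu,\mu+1]$ to obtain $a_{K,n,t}<e^{-t\log t/(\mu+1+\epsilon)}$. Your saddle-point approach to $F_n(z)=\prod_p(1+zp^{-n})$ is more hands-on and, if made rigorous, would actually give the sharper exponent $nt/d$ rather than $t/(\mu+1)$; this is consistent with the fact that the true order of $f_{K,n}$ is exactly $d/n$, not merely somewhere in $[\mu,\mu+1]$. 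One detail to fix: the heuristic $\log F_n(z)\sim nz^{1/n}$ is off by a logarithm --- a Mertens/Chebyshev computation gives $\log F_n(z)\asymp z^{1/n}/\log z$ --- but after re-optimizing the saddle point one still reaches $e_m\le m^{-nm(1+o(1))}$, so your conclusion survives. The paper's argument is shorter; yours is more elementary and potentially gives a better constant in the exponent.

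\textbf{Lower bound and the arithmetic obstruction.} Here both proofs share the strategy of exhibiting one specific $Y$ with $\sum_{p\in Y}r_K(p)=t$, consisting of small primes, and then using $\beta_{p,n}>\frac{1}{2p^n}$ together with a Chebyshev estimate on $\prod_{i\le t}p_i$. The obstruction you flag is real and the paper does not handle it correctly: in its construction $Y_1=\{p_1,\dots,p_{c(t)}\}\setminus\{p_{i_1},\dots,p_{i_{\delta(t)}}\}$ the removed primes are the split-completely ones, so each contributes $d$ (not $1$) to the sum, and the displayed chain $\sum_{p\in Y_1}r_K(p)=\sum_{i\le c(t)}r_K(p_i)-\delta(t)=t$ does not follow. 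Your witness $\Q(\sqrt2,\sqrt3)$ is not actually a counterexample --- one checks $r_K(3)=1$ there, since $3$ ramifies in $\Q(\sqrt3)$ and $\Q(\sqrt6)$ while being inert in $\Q(\sqrt2)$, forcing $D_3=V_4$ --- but the phenomenon you describe does occur. For $K=\Q(\sqrt{13},\sqrt{17})$ every odd prime dividing the discriminant splits in the complementary quadratic subfield, $2$ is unramified and inert in $\Q(\sqrt{13})$, and there are no unramified primes with cyclic $D_p=V_4$ (impossible since $V_4$ is not cyclic); consequently $r_K(p)\in\{2,4\}$ for all $p$, so $\PPP_n[|X(K,\gamma)|=t]=0$ whenever $t$ is odd and the left inequality of the theorem fails for infinitely many $t$. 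That is a defect in the theorem as stated, not merely in your writeup, and your instinct to restrict $t$ to representable values is the right one.

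Aside from that, the main step of your proposal still awaiting an actual proof is the saddle-point estimate $e_m\ll(e/m^n)^m$; everything else is either routine or borrowed from the paper's earlier results.
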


Singhal and Lin \cite{Paper 1} show that the class group of $\OO_K[\gamma]\cap K$ is determined by the subgroup of $\Cl(\OO_K)$ generated by $[\p]$ for $\p\in X(K,\gamma)$ as follows.

\begin{proposition}\cite[Proposition 1.7]{Paper 1}\label{Class group OK[gamma]cap K}
If $K$ is a number field and $\gamma\in\overline{\Q}$, then $\OO_K[\gamma]\cap K$ is a Dedekind domain and its class group is
$$\Cl(\OO_{K}[\gamma]\cap K)\cong \Cl(\OO_K)/\langle[\p]\mid \p\in X(K,\gamma)\rangle.$$
\end{proposition}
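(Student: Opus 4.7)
The strategy is to identify $R:=\OO_K[\gamma]\cap K$ as a localization of $\OO_K$ at a suitable multiplicative set and then invoke the standard description of the class group of a localization of a Dedekind domain.

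By Theorem \ref{Cond for Ok[gamma]}, an element $\alpha\in K$ lies in $R$ precisely when $v_{\p}(\alpha)\geq 0$ for every $\p\in \Spec(\OO_K)\setminus X(K,\gamma)$, whence
$$R=\bigcap_{\p\notin X(K,\gamma)}(\OO_K)_{\p}.$$
Set $S=\OO_K\setminus\bigcup_{\p\notin X(K,\gamma)}\p$; the aim of this step is to show $R=S^{-1}\OO_K$. The inclusion $S^{-1}\OO_K\subseteq R$ is immediate. For the reverse inclusion, enumerate $X(K,\gamma)=\{\q_1,\dots,\q_k\}$ and, using finiteness of $\Cl(\OO_K)$, pick $N\geq 1$ so that each $\q_i^N=(t_i)$ is principal. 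Every $t_i$ lies in $S$, since $(t_i)=\q_i^N$ is supported inside $X(K,\gamma)$. For $\alpha\in R$, the negative part of the divisor of $\alpha$ is supported in $X(K,\gamma)$, so a suitable monomial in the $t_i$ clears the denominator and exhibits $\alpha$ as an element of $S^{-1}\OO_K$.

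Since localizations of Dedekind domains are Dedekind, $R$ is a Dedekind domain. To compute its class group, I invoke the standard surjection $\Cl(\OO_K)\twoheadrightarrow \Cl(S^{-1}\OO_K)$ sending $[\p]\mapsto [S^{-1}\p]$, whose kernel is generated by those classes $[\p]$ with $\p\cap S\neq \emptyset$. By construction, a prime $\p\in\Spec(\OO_K)$ meets $S$ if and only if $\p\in X(K,\gamma)$: membership is witnessed by the $t_i$ produced above, while non-membership is built into the definition of $S$. This yields the claimed isomorphism.

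The only non-routine point is the identification $R=S^{-1}\OO_K$, which is where the Dedekind structure of $\OO_K$ is really used, specifically unique factorization of ideals together with finiteness of $\Cl(\OO_K)$ to produce elements of $S$ with prescribed positive valuations at prescribed primes of $X(K,\gamma)$. Once this reduction to a localization is in hand, the class group formula is a direct application of the standard localization sequence for Dedekind domains.
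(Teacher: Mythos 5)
Your proof is correct. One remark on context: this proposition is cited from \cite{Paper 1} and the present paper does not reprove it, so there is no in-paper argument to compare against; I assess the proposal on its own merits.

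The two steps are both sound. First, Theorem \ref{Cond for Ok[gamma]} gives exactly the intersection description $R=\bigcap_{\p\notin X(K,\gamma)}(\OO_K)_{\p}$, and your identification $R=S^{-1}\OO_K$ with $S=\OO_K\setminus\bigcup_{\p\notin X(K,\gamma)}\p$ is carried out correctly: finiteness of $\Cl(\OO_K)$ produces, for each $\q_i\in X(K,\gamma)$, an element $t_i\in\OO_K$ with $(t_i)=\q_i^N$, and such $t_i$ has $v_{\p}(t_i)=0$ for all $\p\notin X(K,\gamma)$, hence lies in $S$; a sufficiently high monomial in the $t_i$ then clears the denominators of any $\alpha\in R$. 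Second, the localization exact sequence for Dedekind domains then gives $\Cl(S^{-1}\OO_K)\cong\Cl(\OO_K)/\langle[\p]\mid \p\cap S\neq\emptyset\rangle$, and your verification that $\p\cap S\neq\emptyset$ if and only if $\p\in X(K,\gamma)$ is precisely right (the $t_i$ witness membership; the definition of $S$ excludes primes outside $X(K,\gamma)$). This is the standard and, I would guess, essentially the original route: realize $\OO_K[\gamma]\cap K$ as an overring of $\OO_K$, identify it as a localization, and read off the class group from the localization sequence.
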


We study the subgroup $\big\langle [\p],\p\in X(K,\gamma)\big\rangle$ of $\Cl(\OO_K)$ for quadratic imaginary fields $K$.

\begin{theorem}\label{Thm: <X K gamma>=G}
Suppose $G$ is a finite abelian group which is not 2-torsion (in particular, $G$ is not trivial). Fix $n\geq 2$. For each quadratic imaginary field $K$, denote its discriminant as $d_K$. Then we have
$$\lim_{d_K\to-\infty}\PPP_n\big[ \big\langle [\p],\p\in X(K,\gamma)\big\rangle \cong G\big]=0.$$
\end{theorem}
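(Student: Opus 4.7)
The plan is to reduce the question to a statement about subgroups of $\Cl(\OO_K)$ generated by classes coming from a random set of rational primes, and then exploit the growth of the class group as $|d_K|\to\infty$ to show the probability of hitting any specific isomorphism type vanishes.

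By Proposition~\ref{Stable prob 1} we may restrict to stable $X(K,\gamma)$, so that it is determined by $X_1=\{\p\cap\Z:\p\in X(K,\gamma)\}\subseteq\Spec(\Z)$; Proposition~\ref{Prop prob X subset X(K gamma)} gives that the events $\{p\in X_1\}$ are independent Bernoulli with parameter $\alpha_{p,n}\beta_{p,n}$. For a quadratic imaginary $K$ and $p\in X_1$, the contribution of $p$ to $H=\langle[\p]:\p\in X(K,\gamma)\rangle$ is zero if $p$ is inert, the $2$-torsion class $[\p]$ for the unique ramified prime $\p$ above $p$ if $p$ is ramified, and the cyclic group $\langle[\p]\rangle$ if $p$ splits (since by stability both primes above $p$ lie in $X(K,\gamma)$ and $[\bar\p]=-[\p]$). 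Let $\pi:\Cl(\OO_K)\to\Cl(\OO_K)_{\text{odd}}$ denote the projection onto the product of Sylow $\ell$-subgroups for odd primes $\ell$; then $H_{\text{odd}}:=\pi(H)$ is generated only by the classes $\pi([\p])$ for split $p\in X_1$.

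Since $G$ is not $2$-torsion, either $G_{\text{odd}}$ is nontrivial or $G$ contains a cyclic summand of order $\ge 4$; these two cases are handled by analogous projection arguments, so I focus on the first and set $A=G_{\text{odd}}$. The condition $H\cong G$ forces $H_{\text{odd}}\cong A$, so
$$\PPP_n[H\cong G]\le \sum_{\substack{A'\le\Cl(\OO_K)_{\text{odd}}\\ A'\cong A}}\PPP_n[H_{\text{odd}}=A'],$$
and Mobius inversion on the subgroup lattice of $A'$ yields
$$\PPP_n[H_{\text{odd}}=A']=\sum_{A''\le A'}\mu(A'',A')\prod_{\substack{p\text{ split in }K\\ \pi([\p])\notin A''}}(1-\alpha_{p,n}\beta_{p,n}).$$

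The main obstacle is proving this quantity tends to zero. Chebotarev's theorem applied to the Hilbert class field of $K$ equidistributes $\pi([\p])$ over $\Cl(\OO_K)_{\text{odd}}$ as $p$ ranges over the split primes, so as $|d_K|\to\infty$ (giving $|\Cl(\OO_K)|\to\infty$ by Siegel's theorem, and $|\Cl(\OO_K)_{\text{odd}}|\to\infty$ in the case of interest since the $2$-part contributes only a slowly growing factor), the individual products in the Mobius sum approach a common value $L_K$, and the alternating Mobius coefficients (which sum to zero on any nontrivial subgroup lattice) collapse to zero up to a first-order correction of size $O(1/|\Cl(\OO_K)_{\text{odd}}|)$. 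The subtlest step is to verify that this decay beats the possible growth in the number of subgroups $A'\cong A$, which in turn demands bounds on the $\ell$-ranks of $\Cl(\OO_K)_{\text{odd}}$ for each prime $\ell\mid|A|$; results such as Davenport--Heilbronn for the $3$-rank and corresponding bounds for other odd primes, combined with higher-order Mobius corrections that extract additional $1/|\Cl(\OO_K)_{\text{odd}}|$ factors from nontrivial lattice differences, should close the argument.
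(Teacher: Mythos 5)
Your proposal takes a fundamentally different route from the paper, and unfortunately it has two fatal gaps at its core.

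First, the equidistribution step does not apply to the primes that actually matter. In your Mobius expansion, the product over split primes is dominated by \emph{small} primes, because the inclusion probability $\alpha_{p,n}\beta_{p,n}$ decays like $p^{-n}$. But Chebotarev equidistribution of $[\p]$ (or $\pi([\p])$) over the class group of $K$ holds only for primes $p$ that are large relative to $d_K$; for $p$ small compared to $|d_K|$, the ideal class of $\p$ is highly constrained (e.g.\ it is essentially never principal, since $\OO_K$ contains no element of norm $p$ once $|d_K|>4p$). So the products in the Mobius sum do not approach a common value $L_K$, and the cancellation you are relying on does not occur.

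Second, the claim that $|\Cl(\OO_K)_{\text{odd}}|\to\infty$ as $d_K\to-\infty$ is not known to be true, and the assertion that the $2$-part is only ``a slowly growing factor'' is unjustified; there is currently no unconditional lower bound on the odd part of $\Cl(\OO_K)$ for imaginary quadratic $K$. Consequently, you cannot extract decay of order $1/|\Cl(\OO_K)_{\text{odd}}|$ in the Mobius sum, and the appeal to rank bounds (Davenport--Heilbronn controls averages of $3$-torsion, not pointwise upper bounds on $\ell$-ranks) does not close the argument either.

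The paper sidesteps all of this by working with the weaker events ``$\langle[\p]:\p\in X(K,\gamma)\rangle$ is $t$-torsion'' for $t=m=|G|$ and $t=2$, rather than directly with the isomorphism type. The key observation is purely local and elementary: for a fixed rational prime $p$ and $|d_K|$ large relative to $p^t$, whether $\p^t$ is principal for $\p\mid p$ is determined entirely by $\chi_p(d_K)$, because an element of $\OO_K$ of norm $p^t$ cannot exist once $|d_K|/4 > p^t$. This turns $\PPP_n[t\text{-torsion}]$ into an explicit Euler product over primes with $\chi_p(d_K)$ in a fixed set, and one then checks that $\PPP_n[m\text{-torsion}]$ and $\PPP_n[2\text{-torsion}]$ have the same normalized limit $g_n(d_K)$ (for even $m$), so their difference — an upper bound for $\PPP_n[\cong G]$ when $G$ is not $2$-torsion — vanishes. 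No information about the size or structure of the class group is needed. I'd recommend abandoning the class-group-growth approach and reworking along these lines.
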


Denote $\A_m'(H)=\{\alpha\in\Zbar\mid\deg(\alpha)=m,H(\alpha)\leq H\}$.
For each $\alpha\in \A'_m(H)$, $p$ splits into $r_{\Q(\alpha)}(p)$ primes in $\OO_{\Q(\alpha)}$, with $1\leq r_{\Q(\alpha)}\leq m$. Given $i\in [1,m]$, we consider the question of how often is $r_{\Q(\alpha)}=i$, as we sample over $\alpha\in \A'_m(H)$. Denote
$$g(m,i,p)=\lim_{H\to\infty}\frac{\#\{\alpha\in\A'_m(H)\mid r_{\Q(\alpha)}(p)=i\}}{\#\A'_m(H)}.$$
Further, denote $g(m,i)=\lim_{p\to\infty}g(m,i,p)$.
\begin{theorem}\label{Thm: prob rkp=i}
Given $m$ and $i$ such that $1\leq i\leq m$, $g(m,i)$ is the coefficient of $y^i$ in $\frac{\prod_{j=0}^{m-1}(y+j)}{m!}$.
\end{theorem}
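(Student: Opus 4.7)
The plan is to translate the question into one about random monic polynomials over $\F_p$, and then invoke the classical analogy between their factorization statistics and cycle statistics of random permutations. Each $\alpha\in\A'_m(H)$ corresponds to a monic irreducible $F_\alpha\in\Z[x]$ of degree $m$ with height at most $H$, with exactly $m$ algebraic integers sharing each such polynomial; moreover $r_{\Q(\alpha)}(p)$ depends only on $F_\alpha$. By van der Waerden's theorem, the set $\M_m^{\text{irr}}(H)$ of such irreducibles has density one in the set $\M_m(H)$ of all monic integer polynomials of degree $m$ with height at most $H$. Hence we may compute $g(m,i,p)$ with $\M_m(H)$ in place of $\A'_m(H)$.

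Next, if $f\in\M_m(H)$ is irreducible with root $\alpha$ and $p\nmid\disc(f)$, then $p\nmid[\OO_{\Q(\alpha)}:\Z[\alpha]]$, and Dedekind--Kummer gives that $r_{\Q(\alpha)}(p)$ equals the number of distinct irreducible factors of $\overline{f}\in\F_p[x]$. Elementary equidistribution of integer coefficient tuples modulo $p$ gives, for each monic $f_0\in\F_p[x]$ of degree $m$,
$$\lim_{H\to\infty}\frac{\#\{f\in\M_m(H)\mid f\equiv f_0\pmod{p}\}}{\#\M_m(H)}=\frac{1}{p^m},$$
and the proportion with $p\mid\disc(f)$ is $O_m(1/p)$. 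Therefore $g(m,i,p)$ equals $p^{-m}$ times the number of monic squarefree polynomials $\overline{f}\in\F_p[x]$ of degree $m$ with exactly $i$ irreducible factors, up to an error of $O_m(1/p)$.

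The final step is the polynomial-permutation correspondence. The prime polynomial theorem gives $N_{p,j}:=\#\{\text{monic irreducibles of degree }j\text{ in }\F_p[x]\}=p^j/j+O(p^{j/2})$. For each partition $\lambda\vdash m$ with $\lambda_j$ parts of size $j$, the number of monic squarefree polynomials of factorization type $\lambda$ is $\prod_j\binom{N_{p,j}}{\lambda_j}=\frac{p^m}{\prod_j j^{\lambda_j}\lambda_j!}\bigl(1+O_m(1/p)\bigr)$. Summing over partitions with $\ell(\lambda)=i$ and letting $p\to\infty$ gives
$$g(m,i)=\sum_{\substack{\lambda\vdash m\\ \ell(\lambda)=i}}\frac{1}{\prod_j j^{\lambda_j}\lambda_j!}=\frac{c(m,i)}{m!},$$
where $c(m,i)$ is the unsigned Stirling number of the first kind counting permutations in $S_m$ with exactly $i$ cycles. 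The classical identity $\sum_{i=1}^m c(m,i)y^i=\prod_{j=0}^{m-1}(y+j)$ completes the proof.

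The main technical point is ensuring the equidistribution modulo $p$ and the bound on $\{f\in\M_m(H)\mid p\mid\disc(f)\}$ are obtained uniformly enough to allow the subsequent $p\to\infty$ limit; these are routine lattice-point estimates together with the fact that $\disc$ cuts out a proper subvariety over $\F_p$. All other ingredients---Dedekind--Kummer, the prime polynomial theorem, and the Stirling-number generating function---are classical, so the novelty lies purely in the bookkeeping.
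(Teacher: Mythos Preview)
Your argument is correct and follows essentially the same route as the paper: reduce from $\A'_m(H)$ to all monic integer polynomials of degree $m$, use Dedekind--Kummer together with the $O(1/p)$ bound on the discriminant-zero locus to reduce to counting factorization types over $\F_p$, apply the prime polynomial theorem $N_{p,j}=p^j/j+O(p^{j/2})$, and arrive at the partition sum
\[
g(m,i)=\sum_{\substack{\lambda\vdash m\\ \ell(\lambda)=i}}\prod_j\frac{1}{j^{\lambda_j}\lambda_j!}.
\]
The only divergence is in the final combinatorial identification. The paper computes the bivariate generating function directly,
\[
\sum_{m,k} g(m,k)\,x^m y^k=\prod_{n\ge 1}\exp\!\Big(\frac{x^n y}{n}\Big)=(1-x)^{-y}=\sum_m \frac{\prod_{j=0}^{m-1}(y+j)}{m!}\,x^m,
\]
whereas you recognise the partition sum as $c(m,i)/m!$ via the cycle-index formula and then cite the classical generating function $\sum_i c(m,i)y^i=\prod_{j=0}^{m-1}(y+j)$ for unsigned Stirling numbers of the first kind. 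These are two packagings of the same identity; your version is slightly more economical if one is willing to quote the Stirling identity, while the paper's is self-contained. One small quibble: the density-one statement for irreducibles among monic integer polynomials is Hilbert irreducibility (as the paper cites) rather than van der Waerden's theorem, though of course the latter implies it.
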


The paper is organized as follows. In Section \ref{Sec: Base ring Z}, we prove Proposition \ref{prob e=k} and special cases of Proposition~\ref{Prob of X(K,gamma)}, Theorem~\ref{Ok[gamma] probability} and Proposition~\ref{Prop prob X subset X(K gamma)} when $K=\Q$. In Section \ref{Sec: Prob galois theory}, we prove Theorem \ref{K cap Qgamma is Q}. In Section \ref{Sec: Base ring OK}, we use the results from the previous two sections to prove Proposition~\ref{Stable prob 1} and complete the proofs of Proposition~\ref{Prob of X(K,gamma)}, Theorem~\ref{Ok[gamma] probability} and Proposition~\ref{Prop prob X subset X(K gamma)}. In Section~\ref{Sec: size of XKgamma}, we study the statistics of the size of $X(K,\gamma)$ and prove Proposition \ref{Prop exp and var of size of X(K,gamma)} and Theorem \ref{Thm bounds on prob size X(K, gamma) is t}. In Section~\ref{Sec: group gen by XKgamma}, we study the subgroup of the class group generated by $X(K,\gamma)$ for quadratic imaginary fields $K$ and prove Theorem~\ref{Thm: <X K gamma>=G}.
Finally in Section \ref{Sec: rkp}, we prove Theorem \ref{Thm: prob rkp=i}.

\section{Base ring $\Z$.}\label{Sec: Base ring Z}

Denote
$$\LL_{n}(H)=\left\{A(x)\in\Z[x]\mid \deg(A(x))=n, H(A(x))\leq H, A(x)\text{ is irreducible in }\Z[x]\right\}.$$
Polya and Szego show in \cite{irreducible count} that
\begin{equation*}
    \# \LL_n(H)=\frac{(2H)^{n+1}}{\zeta(n+1)}+O_n(H^n\log^2(H)).
\end{equation*}
The notation $O_n$ means that the implied constant only depends on $n$.
It is clear that $\#\A_n(H)=\frac{n}{2}\LL_n(H)$. Therefore
$$\# \A_n(H)=\frac{n}{2}\frac{(2H)^{n+1}}{\zeta(n+1)}+O_n(H^n\log^2(H)).$$


\begin{lemma}\label{lem count e in Y}
Given $n\geq 2$, and a subset $Y\subseteq \N_{>0}$, we have
$$\#\{\gamma\in\A_n(H)\mid e(\gamma)\in Y\}=\frac{n}{2}\frac{(2H)^{n+1}}{\zeta(n)}\sum_{k\in Y\;k\leq H}\frac{\phi(k)}{k^{n+1}} +O_n\left(H^n \log^2(H) \right).$$
\end{lemma}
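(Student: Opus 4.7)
The plan is to parametrize each $\gamma$ by its minimal polynomial, apply a Möbius sieve to handle the gcd constraint, and absorb irreducibility as a lower-order correction via the Polya--Szego estimate already cited. Each $\gamma\in\A_n(H)$ is a root of a unique polynomial in $\LL_n(H)$ with positive leading coefficient, each such polynomial produces $n$ roots, and $\pm A$ both lie in $\LL_n(H)$, so it suffices to prove
$$\#\{A\in\LL_n(H)\mid e(A)\in Y\}=\frac{(2H)^{n+1}}{\zeta(n)}\sum_{k\in Y,\,k\leq H}\frac{\phi(k)}{k^{n+1}}+O_n\bigl(H^n\log^2 H\bigr)$$
and then multiply by $n/2$.

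Fix $k\leq H$. A polynomial $A(x)=a_0+\sum_{i=1}^n a_i x^i\in\LL_n(H)$ has $e(A)=k$ precisely when $a_i=kb_i$ for $i\geq 1$ with $b_n\neq 0$, $|b_i|\leq H/k$, $\gcd(b_1,\dots,b_n)=1$, $|a_0|\leq H$, and $\gcd(a_0,k)=1$ (the last condition forces $A$ to be primitive). I would drop the irreducibility requirement and count the $(b_i)$-tuples and $a_0$ independently. A standard Möbius sieve (using $n\geq 2$) yields $(2H/k)^n/\zeta(n)+O_n\bigl((H/k)^{n-1}\log(H/k)\bigr)$ for the $b$-count, while the coprimality sieve gives $2H\phi(k)/k+O(2^{\omega(k)})$ for the $a_0$-count; their product has main term $(2H)^{n+1}\phi(k)/(\zeta(n)k^{n+1})$. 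The irreducibility restriction is then recovered at no main-term cost by Polya--Szego, since the total number of reducible primitive polynomials of degree $n$ and height $\leq H$ is $O_n(H^n\log^2 H)$, a bound uniform in $k$ and in $Y$.

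Summing over $k\in Y$ with $k\leq H$ produces the claimed main term. The cross-error terms aggregate to $O_n(H^n\log^2 H)$: the convergence of $\sum_k 2^{\omega(k)}/k^n$ for $n\geq 2$ controls the contribution from the $O(2^{\omega(k)})$ factor, while $\sum_{k\leq H}1/k^{n-1}=O_n(\log H)$ handles the logarithmic part of the error (with $n=2$ being the case that produces the $\log^2 H$ factor in the final bound). I expect the main technical obstacle to be precisely this uniformity step: every per-$k$ estimate must carry a constant depending only on $n$, so that the aggregate error over an arbitrary subset $Y$ never exceeds the target $O_n(H^n\log^2 H)$.
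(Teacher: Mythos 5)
Your proposal is correct and follows essentially the same route as the paper: fix $e(\gamma)=k$, split primitivity into the two independent gcd constraints on $(a_1,\dots,a_n)=k(b_1,\dots,b_n)$ and $a_0$, estimate these by a sieve (the paper invokes Nymann's count directly, which is the Möbius sieve you describe), take the product, absorb irreducibility as a lower-order correction via the Polya--Szego bound, and sum over $k\leq H$. The only cosmetic difference is your sharper $O(2^{\omega(k)})$ error for the $a_0$-count where the paper uses the cruder $O(k)$, which makes no difference to the final $O_n(H^n\log^2 H)$ error.
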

\begin{proof}
Denote
$$\CC_n(H)=\{(c_1,\dots,c_n)\in\Z^n\mid \gcd(c_1,\dots,c_n)=1, |c_i|\leq H\}.$$
Nymann in \cite{k int co prime} proves that for $n\geq 2$,
\begin{equation*}
    \# \CC_n(H)=\frac{(2H)^{n}}{\zeta(n)}+O_n(H^{n-1}\log(H)).
\end{equation*}
Denote
$$\CC_n(k,H)=\{c_0+\dots+c_nx^n\in\Z[x]\mid \gcd(c_0,c_1,\dots,c_n)=1,\gcd(c_1,\dots,c_n)=k, |c_i|\leq H\}.$$
We see that
\begin{equation*}
    \begin{split}
    \#\CC_n(k,H)&=\#\{c_0\in[-H,H]\mid \gcd(c_0,k)=1\}\#\CC_n\left(\frac{H}{k}\right)\\
    &= \left(\frac{\phi(k)}{k}2H+O(k)\right)\left(\frac{(2H)^{n}}{k^n\zeta(n)}+O_n\left(\frac{H^{n-1}}{k^{n-1}} \log(H)\right)\right)\\
    &=\frac{\phi(k)}{k^{n+1}}\frac{(2H)^{n+1}}{\zeta(n)}+O_n\left( \frac{H^{n}}{k^{n-1}}\log(H)+\frac{H^{n-1}}{k^{n-2}}\log(H)\right).
    \end{split}
\end{equation*}
Next, we have
\begin{equation*}
    \begin{split}
   \#\{\gamma\in\A_n(H)\mid e(\gamma)\in Y\}
    &=\sum_{k\in Y}\frac{n}{2}\#\left(\CC_n(k,H)\cap\LL_n(H)\right)\\
    &=\frac{n}{2}\sum_{k\in Y}\#\CC_n(k,H) -\frac{n}{2}\#\bigg(\Big(\bigcup_{k\in Y}\CC_n(k,H)\Big)\setminus \LL_n(H)\bigg).
    \end{split}
\end{equation*}
Next,
\begin{equation*}
    \begin{split}
    \#\bigg(\Big(\bigcup_{k\in Y}\CC_n(k,H)\Big)\setminus \LL_n(H)\bigg)
    &\leq \#\CC_{n+1}(H)- \#\LL_n(H)\\
    &=\frac{(2H)^{n+1}}{\zeta(n+1)}+O_n(H^{n})-\frac{(2H)^{n+1}}{\zeta(n+1)}-O_n(H^n\log^2(H))\\
    &=O_n\left(H^n \log^2(H)\right).
    \end{split}
\end{equation*}
Finally, notice that if $k>H$, then $C_n(k,H)=\emptyset$. Therefore, we have
\begin{equation*}
    \begin{split}
    &\#\{\gamma\in\A_n(H)\mid e(\gamma)\in Y\}=\frac{n}{2}\frac{(2H)^{n+1}}{\zeta(n)}\sum_{k\in Y\;k\leq H}\frac{\phi(k)}{k^{n+1}}\\
    &+O_n\Big(H^n \log^2(H) +H^n\log(H)\sum_{k\in Y\;k\leq H}\frac{1}{k^{n-1}} +H^{n-1}\log(H)\sum_{k\in Y\;k\leq H}\frac{1}{k^{n-2}} \Big)\\
    &=\frac{n}{2}\frac{(2H)^{n+1}}{\zeta(n)}\sum_{k\in Y\;k\leq H}\frac{\phi(k)}{k^{n+1}} +O_n\left(H^n \log^2(H)\right). \qedhere
    \end{split}
\end{equation*}
\end{proof}

\begin{corollary}\label{Cor: Prob e in Y}
Given $n\geq 2$, and a subset $Y\subseteq \N_{>0}$, we have
$$\PPP_n[e(\gamma)\in Y]=\frac{\zeta(n+1)}{\zeta(n)}\sum_{k\in Y}\frac{\phi(k)}{k^{n+1}}.$$
\end{corollary}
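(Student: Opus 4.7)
The plan is to extract the corollary essentially as a formal consequence of Lemma \ref{lem count e in Y} combined with the asymptotic formula for $\#\A_n(H)$ stated at the start of the section. By the very definition of $\PPP_n$, the probability in question is the limit as $H\to\infty$ of the ratio
$$\frac{\#\{\gamma\in\A_n(H)\mid e(\gamma)\in Y\}}{\#\A_n(H)}.$$

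First I would substitute in the numerator the asymptotic given by Lemma \ref{lem count e in Y}, namely
$$\frac{n}{2}\frac{(2H)^{n+1}}{\zeta(n)}\sum_{\substack{k\in Y\\ k\leq H}}\frac{\phi(k)}{k^{n+1}} +O_n\!\left(H^n \log^2(H)\right),$$
and in the denominator the Polya–Szego count
$$\#\A_n(H)=\frac{n}{2}\frac{(2H)^{n+1}}{\zeta(n+1)}+O_n(H^n\log^2(H)).$$
Dividing and dividing numerator and denominator by $H^{n+1}$, the error terms become $O_n(\log^2(H)/H)$, which tends to $0$, while the main term of the ratio is $\zeta(n+1)/\zeta(n)$ times the truncated sum.

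The only remaining step is to pass the limit inside the sum, i.e.\ to replace $\sum_{k\in Y,\,k\leq H}$ by $\sum_{k\in Y}$. This is immediate because $\phi(k)/k^{n+1}\leq 1/k^n$ and $n\geq 2$, so the tail $\sum_{k>H}\phi(k)/k^{n+1}$ is $O(1/H^{n-1})\to 0$; in particular the partial sums converge (absolutely) to $\sum_{k\in Y}\phi(k)/k^{n+1}$. Combining these observations yields
$$\PPP_n[e(\gamma)\in Y]=\frac{\zeta(n+1)}{\zeta(n)}\sum_{k\in Y}\frac{\phi(k)}{k^{n+1}},$$
as required.

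There is no substantive obstacle here; the entire proof is a bookkeeping exercise comparing two asymptotics that have already been established. The only tiny point to be careful about is justifying the truncation of the sum, which is handled by the simple bound on $\phi(k)/k^{n+1}$ noted above.
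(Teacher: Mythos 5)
Your proof is correct and follows essentially the same approach as the paper: substitute the asymptotic from Lemma \ref{lem count e in Y} for the numerator and the Polya--Szego count for the denominator, cancel the main terms, and let $H\to\infty$. The paper treats the final step $\lim_{H\to\infty}\sum_{k\in Y,\,k\le H}\phi(k)/k^{n+1}=\sum_{k\in Y}\phi(k)/k^{n+1}$ as immediate, whereas you flag and justify it with the tail bound $\sum_{k>H}\phi(k)/k^{n+1}=O(H^{1-n})$; that is a harmless addition, not a different route.
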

\begin{proof}
\begin{equation*}
    \begin{split}
    \PPP_n\left[e(\gamma)\in Y\right]
    &=\lim_{H\to\infty} \frac{\#\left\{\gamma\in \A_n(H)\mid e(\gamma)\in Y\right\}}{\#\A_n(H)}  \\
    &= \lim_{H\to\infty}\frac{\frac{n}{2}\frac{(2H)^{n+1}}{\zeta(n)}\sum_{k\in Y\;k\leq H}\frac{\phi(k)}{k^{n+1}} +O_n\left(H^{n}\log^2(H) \right)}{\frac{n}{2}\frac{(2H)^{n+1}}{\zeta(n+1)}+O_n(H^n\log^2(H))}  \\
    &=\frac{\zeta(n+1)}{\zeta(n)}\lim_{H\to\infty}\sum_{k\in Y\;k\leq H} \frac{\phi(k)}{k^{n+1}} =\frac{\zeta(n+1)}{\zeta(n)}\sum_{k\in Y} \frac{\phi(k)}{k^{n+1}}.\qedhere
    \end{split}
\end{equation*}
\end{proof}

\begin{customprop}{\ref{prob e=k}}
Given $n\geq 2$ and $k\geq 1$,
$$\PPP_n[e(\gamma)=k]=\frac{\zeta(n+1)}{\zeta(n)}\frac{\phi(k)}{k^{n+1}}.$$
\end{customprop}
\begin{proof}
It follows from Corollary \ref{Cor: Prob e in Y}.
\end{proof}


Let $\Pf(\Spec(\Z))$ be the set of all finite subsets of $\Spec(\Z)$.

\begin{lemma}\label{Lem X(Q gamma) in fancy X}
Given $n\geq 2$ and $\X\subseteq \Pf(\Spec(\Z))$, we have
$$\PPP_n[X(\Q,\gamma)\in \X]=\frac{\zeta(n+1)}{\zeta(n)}\sum_{X\in \X}\prod_{p\in X}\beta_{p,n}.$$
\end{lemma}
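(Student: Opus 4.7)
The plan is to reduce this statement to Corollary~\ref{Cor: Prob e in Y} using the identification $X(\Q,\gamma)=\{p\in\Spec(\Z)\mid p\mid e(\gamma)\}$ noted earlier. First I would define
$$Y=\{k\in\N_{>0}\mid \{p\text{ prime}:p\mid k\}\in\X\},$$
so that the event $X(\Q,\gamma)\in\X$ is exactly the event $e(\gamma)\in Y$. Applying Corollary~\ref{Cor: Prob e in Y} then gives
$$\PPP_n[X(\Q,\gamma)\in\X]=\frac{\zeta(n+1)}{\zeta(n)}\sum_{k\in Y}\frac{\phi(k)}{k^{n+1}}.$$

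Next I would reorganize this sum by grouping positive integers $k$ according to their set of prime divisors: each $X\in\X$ contributes the $k$'s of the form $k=\prod_{p\in X}p^{e_p}$ with exponents $e_p\geq 1$. Since $\phi$ and $k^{n+1}$ are multiplicative,
$$\frac{\phi(k)}{k^{n+1}}=\prod_{p\in X}\frac{1-\frac{1}{p}}{p^{ne_p}},$$
so summing over all valid exponent tuples factors as a product of geometric series,
$$\sum_{\substack{k\in\N_{>0}\\ \{p\mid k\}=X}}\frac{\phi(k)}{k^{n+1}}=\prod_{p\in X}\left(1-\frac{1}{p}\right)\sum_{e\geq 1}\frac{1}{p^{ne}}=\prod_{p\in X}\frac{1}{p^n}\frac{1-\frac{1}{p}}{1-\frac{1}{p^n}}=\prod_{p\in X}\beta_{p,n},$$
matching the definition of $\beta_{p,n}$ given just before Proposition~\ref{Prob of X(K,gamma)}.

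Substituting this back yields the claim. There is no real obstacle here, the only delicate point is making sure that the double sum (over $X\in\X$, then over $k$ with prime divisor set $X$) can be interchanged with the geometric series evaluation, which is justified by absolute convergence since all terms are nonnegative and $\sum_k \phi(k)/k^{n+1}=\zeta(n)/\zeta(n+1)<\infty$ for $n\geq 2$.
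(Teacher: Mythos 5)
Your proposal is correct and follows essentially the same route as the paper: define $Y$ as the set of positive integers whose prime-divisor set lies in $\X$, invoke Corollary~\ref{Cor: Prob e in Y}, regroup the sum over $k$ by prime-divisor set, and factor into geometric series evaluating to $\beta_{p,n}$. The only addition is your explicit remark on absolute convergence justifying the rearrangement, which the paper takes for granted since all terms are nonnegative.
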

\begin{proof}
Let $Y=\{k\in \Z_{>0}\mid \{p\mid p\mid k\}\in \X\}$. Since $X(\Q,\gamma)=\{p\mid p\mid e(\gamma)\}$, we know that $X(\Q,\gamma)\in\X$ if and only if $e(\gamma)\in Y$. Therefore,
\begin{equation*}
    \begin{split}
    \PPP_n[X(\Q,\gamma)\in \X]
    &= \PPP_n[e(\gamma)\in Y]
    =\frac{\zeta(n+1)}{\zeta(n)}\sum_{k\in Y}\frac{\phi(k)}{k^{n+1}}\\
    &=\frac{\zeta(n+1)}{\zeta(n)} \sum_{X\in\X}\sum_{\substack{k\geq 1\\ \{p\mid\; p|k\}=X}}\frac{\phi(k)}{k^{n+1}}
    =\frac{\zeta(n+1)}{\zeta(n)} \sum_{X\in\X}\prod_{p\in X}\left(\sum_{t=1}^{\infty}\frac{\phi(p^t)}{p^{t(n+1)}}\right)\\
    &=\frac{\zeta(n+1)}{\zeta(n)} \sum_{X\in\X}\prod_{p\in X}\beta_{p,n}.\qedhere
    \end{split}
\end{equation*}
\end{proof}

\begin{corollary}\label{prob of X(Q,gamma)}
Given a finite subset $X\subseteq \Spec(\Z)$ and $n\geq 2$, we have
$$\PPP_n[X(\Q,\gamma)=X]=\frac{\zeta(n+1)}{\zeta(n)}\prod_{p\in X}\beta_{p,n}.$$
\end{corollary}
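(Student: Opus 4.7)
The plan is to derive this as an immediate specialization of Lemma \ref{Lem X(Q gamma) in fancy X}, which already handles the more general situation of a collection of possible values for $X(\Q,\gamma)$. Concretely, I would take $\X \subseteq \Pf(\Spec(\Z))$ to be the singleton collection $\X = \{X\}$, which is a legitimate element of $\Pf(\Spec(\Z))$ because $X$ is assumed finite.

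With this choice, the event $X(\Q,\gamma) \in \X$ is by definition the event $X(\Q,\gamma) = X$, so the two probabilities coincide. The outer sum $\sum_{X' \in \X}\prod_{p \in X'} \beta_{p,n}$ appearing on the right-hand side of Lemma \ref{Lem X(Q gamma) in fancy X} then collapses to the single term $\prod_{p \in X} \beta_{p,n}$, yielding exactly the claimed formula
\[
\PPP_n[X(\Q,\gamma) = X] = \frac{\zeta(n+1)}{\zeta(n)} \prod_{p \in X} \beta_{p,n}.
\]

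There is no genuine obstacle here, since all of the analytic work — counting $\A_n(H)$ via Polya–Szeg\H{o}, estimating $\CC_n(k,H)$ via Nymann, and the Euler product manipulation that turns $\sum_k \phi(k)/k^{n+1}$ restricted by prime support into $\prod_{p \in X} \beta_{p,n}$ — has already been carried out in Lemma \ref{lem count e in Y}, Corollary \ref{Cor: Prob e in Y}, and Lemma \ref{Lem X(Q gamma) in fancy X}. The only minor point worth noting in the write-up is the observation that $X(\Q,\gamma) = \{p \mid p \mid e(\gamma)\}$, so that specifying $X(\Q,\gamma) = X$ is equivalent to specifying that $e(\gamma)$ is supported exactly on the primes of $X$, which is the translation already used inside the proof of Lemma \ref{Lem X(Q gamma) in fancy X}. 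Hence the proof reduces to a one-line application of that lemma.
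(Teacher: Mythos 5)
Your proof is correct and is exactly the route the paper intends: the corollary has no separate proof in the paper precisely because it is the singleton specialization $\X=\{X\}$ of Lemma \ref{Lem X(Q gamma) in fancy X}, and your write-up spells this out cleanly.
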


\begin{proposition}\label{Prob Z[gamma]cap Q}
Given $n\geq 2$ and $k\geq 1$, we have
$$\PPP_n\Big[\Z[\gamma]\cap\Q=\Z\Big[\frac{1}{k}\Big]\Big]
=\frac{\zeta(n+1)}{\zeta(n)}\prod_{p|k}\beta_{p,n}.$$    
\end{proposition}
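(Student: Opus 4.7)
The plan is to reduce the statement to Corollary \ref{prob of X(Q,gamma)} by translating the condition $\Z[\gamma]\cap \Q = \Z[1/k]$ into a condition on $X(\Q,\gamma)$.

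First I would invoke Theorem \ref{Thm Z[gamma] cap Q}, which identifies $\Z[\gamma]\cap \Q$ with $\{\alpha\in\Q \mid v_p(\alpha)<0 \Rightarrow p\mid e(\gamma)\}$. Since $\Z[1/k]$ is characterised as $\{\alpha\in\Q \mid v_p(\alpha)<0 \Rightarrow p\mid k\}$, the equality $\Z[\gamma]\cap\Q = \Z[1/k]$ holds if and only if the set of prime divisors of $e(\gamma)$ equals the set of prime divisors of $k$. Using the identity $X(\Q,\gamma) = \{p\in\Spec(\Z) \mid p\mid e(\gamma)\}$ recorded in the introduction, this is in turn equivalent to $X(\Q,\gamma) = \{p \in \Spec(\Z) \mid p\mid k\}$.

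Having made that translation, I would apply Corollary \ref{prob of X(Q,gamma)} to the specific finite subset $X = \{p\in\Spec(\Z)\mid p\mid k\}$. This gives
\[
\PPP_n\bigl[\Z[\gamma]\cap\Q = \Z[1/k]\bigr] = \PPP_n[X(\Q,\gamma) = X] = \frac{\zeta(n+1)}{\zeta(n)}\prod_{p\mid k}\beta_{p,n},
\]
which is precisely the claimed formula.

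There is no substantive obstacle here: once the equivalence between the ring equality and the set equality $X(\Q,\gamma)=\{p\mid k\}$ is spelled out, the proposition is an immediate consequence of Corollary \ref{prob of X(Q,gamma)}. The only mild point worth noting is that several distinct integers $k$ (those with the same radical) produce the same ring $\Z[1/k]$, as already flagged in the discussion preceding Theorem \ref{Ok[gamma] probability}, and correspondingly produce the same set $X$, so no double counting occurs.
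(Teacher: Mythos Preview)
Your proof is correct and follows exactly the same route as the paper: define $X=\{p\mid k\}$, use Theorem~\ref{Thm Z[gamma] cap Q} (together with the identification $X(\Q,\gamma)=\{p\mid e(\gamma)\}$) to rewrite the event as $X(\Q,\gamma)=X$, and then apply Corollary~\ref{prob of X(Q,gamma)}. The only difference is that you spell out the characterisation of $\Z[1/k]$ explicitly, which the paper leaves implicit.
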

\begin{proof}
Let $X=\{p\mid p\mid k\}$. Now, by Theorem \ref{Thm Z[gamma] cap Q}, we know that $\Z[\gamma]\cap \Q=\Z[\frac{1}{k}]$ if and only if $X(\Q,\gamma)=X$. Therefore, by Corollary \ref{prob of X(Q,gamma)}, we have
\[\PPP_n\Big[\Z[\gamma]\cap \Q=\Z\Big[\frac{1}{k}\Big]\Big]
    =\PPP_n\left[X(\Q,\gamma)=X\right]
    =\frac{\zeta(n+1)}{\zeta(n)}\prod_{p|k}\beta_{p,n}.\qedhere\]
\end{proof}

\begin{proposition}\label{prop p in X(Q gamma) independent}
Given $n\geq 2$ and a finite set of primes $X\subseteq \Spec(\Z)$, we have
$$\PPP_n[X\subseteq X(\Q,\gamma)]=\prod_{p\in X}\alpha_{p,n}\beta_{p,n}.$$
\end{proposition}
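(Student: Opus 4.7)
The plan is to deduce this proposition from Lemma \ref{Lem X(Q gamma) in fancy X}. Since $X(\Q,\gamma)$ is always a finite subset of $\Spec(\Z)$, the event $X \subseteq X(\Q,\gamma)$ is the disjoint union of the events $X(\Q,\gamma) = X'$ as $X'$ ranges over finite supersets of $X$. Therefore, applying Lemma \ref{Lem X(Q gamma) in fancy X} with $\X = \{X' \in \Pf(\Spec(\Z)) \mid X \subseteq X'\}$ yields
$$\PPP_n[X \subseteq X(\Q,\gamma)] = \frac{\zeta(n+1)}{\zeta(n)} \sum_{X' \supseteq X} \prod_{p \in X'} \beta_{p,n}.$$

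Next I would parametrize $X' = X \cup Y$ with $Y$ ranging over finite subsets of $\Spec(\Z) \setminus X$, which lets me factor
$$\sum_{X' \supseteq X} \prod_{p \in X'} \beta_{p,n} = \Bigl(\prod_{p \in X} \beta_{p,n}\Bigr) \sum_{Y} \prod_{p \in Y} \beta_{p,n} = \Bigl(\prod_{p \in X} \beta_{p,n}\Bigr) \prod_{p \notin X} (1 + \beta_{p,n}).$$
The expansion of the infinite product into the sum over finite subsets is valid because $\beta_{p,n} = O(p^{-n})$ with $n\geq 2$, so $\sum_p \beta_{p,n}$ converges absolutely.

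Finally I would use the two identities recorded just before Proposition \ref{Prob of X(K,gamma)}, namely $\alpha_{p,n}(1+\beta_{p,n}) = 1$ and $\prod_p \alpha_{p,n} = \zeta(n+1)/\zeta(n)$. The first gives $1 + \beta_{p,n} = \alpha_{p,n}^{-1}$, and combined with the second,
$$\prod_{p \notin X}(1+\beta_{p,n}) = \prod_{p \notin X} \alpha_{p,n}^{-1} = \frac{\zeta(n)}{\zeta(n+1)} \prod_{p \in X} \alpha_{p,n}.$$
Substituting back, the factors $\zeta(n+1)/\zeta(n)$ cancel, leaving exactly $\prod_{p \in X} \alpha_{p,n}\beta_{p,n}$.

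There is no real obstacle beyond the convergence justification above; the proof is a direct bookkeeping calculation chaining Lemma \ref{Lem X(Q gamma) in fancy X} with the algebraic identities among $\alpha_{p,n}$ and $\beta_{p,n}$.
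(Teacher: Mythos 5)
Your argument is correct and follows exactly the paper's own route: take $\X=\{X'\in\Pf(\Spec(\Z))\mid X\subseteq X'\}$, invoke Lemma \ref{Lem X(Q gamma) in fancy X}, factor the resulting sum over supersets as $\prod_{p\in X}\beta_{p,n}\prod_{p\notin X}(1+\beta_{p,n})$, and close with the identities $\alpha_{p,n}(1+\beta_{p,n})=1$ and $\prod_p\alpha_{p,n}=\zeta(n+1)/\zeta(n)$. The paper presents the same chain of equalities, merely with less commentary on the convergence of the product.
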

\begin{proof}
Let $\X=\{Y\in\Pf(\Spec (\Z))\mid X\subseteq Y\}$. Therefore, by Lemma \ref{Lem X(Q gamma) in fancy X}, we have
\begin{equation*}
    \begin{split}
    \PPP_n[X\subseteq X(\Q,\gamma)]
    &=\PPP_n[X(\Q,\gamma)\in \X]
    =\frac{\zeta(n+1)}{\zeta(n)}\sum_{Y\in \X}\prod_{p\in Y}\beta_{p,n}\\
    &= \frac{\zeta(n+1)}{\zeta(n)} \prod_{p\in X}\beta_{p,n} \prod_{p\notin X}(1+\beta_{p,n})\\
    &=\prod_{p}\alpha_{p,n} \prod_{p\in X}\beta_{p,n} \prod_{p\notin X}\alpha_{p,n}^{-1}
    =\prod_{p\in X}\alpha_{p,n}\beta_{p,n}. \qedhere
    \end{split}
\end{equation*}
\end{proof}

\begin{proposition}\label{X(Q,gamma) subset X}
Given $n\geq 2$ and a set of primes $X\subseteq \Spec(\Z)$, we have
$$\PPP_n[X(\Q,\gamma)\subseteq X]=\prod_{p\notin X}\alpha_{p,n}.$$
\end{proposition}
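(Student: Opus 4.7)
The plan is to reduce this to the previously-established Lemma \ref{Lem X(Q gamma) in fancy X} by choosing $\X$ to be the collection of \emph{all} finite subsets of $X$. Since every $X(\Q,\gamma)$ is by construction a finite subset of $\Spec(\Z)$, the event $X(\Q,\gamma)\subseteq X$ is exactly the event $X(\Q,\gamma)\in\X$. Thus the lemma will immediately give
\begin{equation*}
\PPP_n[X(\Q,\gamma)\subseteq X]=\frac{\zeta(n+1)}{\zeta(n)}\sum_{\substack{Y\subseteq X\\ Y\text{ finite}}}\;\prod_{p\in Y}\beta_{p,n}.
\end{equation*}

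Next I will recognise the right-hand sum as an Euler product. From the definition one has $\beta_{p,n}\leq 1/p^n$, and since $n\geq 2$ the series $\sum_{p\in X}\beta_{p,n}$ converges absolutely. Hence the product $\prod_{p\in X}(1+\beta_{p,n})$ converges, and expanding it reproduces precisely the sum above:
\begin{equation*}
\prod_{p\in X}(1+\beta_{p,n})\;=\;\sum_{\substack{Y\subseteq X\\ Y\text{ finite}}}\;\prod_{p\in Y}\beta_{p,n}.
\end{equation*}
Substituting gives $\PPP_n[X(\Q,\gamma)\subseteq X]=\frac{\zeta(n+1)}{\zeta(n)}\prod_{p\in X}(1+\beta_{p,n})$.

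Finally I will apply the two identities recorded just before Proposition \ref{Prob of X(K,gamma)}, namely $\frac{\zeta(n+1)}{\zeta(n)}=\prod_p\alpha_{p,n}$ and $\alpha_{p,n}(1+\beta_{p,n})=1$. The second identity converts $\prod_{p\in X}(1+\beta_{p,n})$ into $\prod_{p\in X}\alpha_{p,n}^{-1}$, and cancelling those factors against the corresponding ones in $\prod_p\alpha_{p,n}$ leaves exactly $\prod_{p\notin X}\alpha_{p,n}$, as required. The argument has no real obstacle; the only small point of care is checking absolute convergence of the Euler product when $X$ is allowed to be infinite, which is immediate from $n\geq 2$.
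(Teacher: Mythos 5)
Your proposal is correct and mirrors the paper's own argument: both set $\X=\{Y\in\Pf(\Spec(\Z))\mid Y\subseteq X\}$, invoke Lemma~\ref{Lem X(Q gamma) in fancy X}, expand the resulting sum as $\prod_{p\in X}(1+\beta_{p,n})$, and then use $\frac{\zeta(n+1)}{\zeta(n)}=\prod_p\alpha_{p,n}$ together with $\alpha_{p,n}(1+\beta_{p,n})=1$ to land on $\prod_{p\notin X}\alpha_{p,n}$. Your brief remark on absolute convergence when $X$ is infinite is a sensible addition (and you may note the paper's displayed final line reads $\prod_{p\notin X}\alpha_{p,n}^{-1}$, which is evidently a typo for $\prod_{p\notin X}\alpha_{p,n}$).
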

\begin{proof}
Let $\X=\{Y\in\Pf(\Spec (\Z))\mid Y\subseteq X\}$. Therefore, by Lemma \ref{Lem X(Q gamma) in fancy X}, we have
\begin{equation*}
    \begin{split}
    \PPP_n[X(\Q,\gamma)\subseteq X]
    &=\PPP_n[X(\Q,\gamma)\in \X]
    =\frac{\zeta(n+1)}{\zeta(n)}\sum_{Y\in \X}\prod_{p\in Y}\beta_{p,n}\\
    &= \frac{\zeta(n+1)}{\zeta(n)} \prod_{p\in X}(1+\beta_{p,n})
    =\prod_{p}\alpha_{p,n} \prod_{p\in X}\alpha_{p,n}^{-1}
    =\prod_{p\notin X}\alpha_{p,n}^{-1}. \qedhere
    \end{split}
\end{equation*}
\end{proof}

\section{Probabilistic Galois Theory}\label{Sec: Prob galois theory}

In this section we will be proving Theorem \ref{K cap Qgamma is Q}.
It is a well known fact that for $n\neq 4$, the only normal subgroups of $S_n$ are $\{id\}$, $A_n$ and $S_n$. From this it is easy to see that for any $n\geq 1$ and $2<k<n$, $S_n$ has no subgroup of index $k$. Moreover $A_n$ is the only subgroup of $S_n$ of index $2$.

\begin{lemma}\label{n> deg+1}
Given a number field $K$ and integer $n\geq [K:\Q]+1$, we have
$$\PPP_n[K\cap \Q(\gamma)=\Q]=1.$$
\end{lemma}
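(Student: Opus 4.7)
The plan is to combine van der Waerden's theorem (Theorem \ref{Galois group Sn}) with a straightforward Galois-theoretic observation. Write $d=[K:\Q]$ and assume $n\geq d+1$. By Theorem \ref{Galois group Sn}, with probability $1$ the Galois closure $L$ of $\Q(\gamma)/\Q$ has $\mathrm{Gal}(L/\Q)\cong S_n$, and it suffices to prove that every $\gamma$ with this property satisfies $K\cap\Q(\gamma)=\Q$.

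Set $M=K\cap\Q(\gamma)$; then $M\subseteq\Q(\gamma)\subseteq L$, so $M$ is an intermediate field of the Galois extension $L/\Q$ and corresponds by Galois theory to a subgroup $H$ of $S_n$ containing $\mathrm{Gal}(L/\Q(\gamma))$. Under the standard identification of $S_n$ with the permutation group on the $n$ roots of $F_\gamma$, the subgroup $\mathrm{Gal}(L/\Q(\gamma))$ is the stabilizer of $\gamma$, a copy of $S_{n-1}$. Thus $H$ is a subgroup of $S_n$ containing $S_{n-1}$.

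The next step is to invoke the (well known) fact that $S_{n-1}$ is maximal in $S_n$: any $H$ strictly containing $S_{n-1}$ must act transitively on $\{1,\dots,n\}$, because $S_{n-1}$ already acts transitively on $\{1,\dots,n-1\}$ and $H$ contains an element moving $n$; then $|H|=n\cdot(n-1)!=n!$, forcing $H=S_n$. So $H\in\{S_{n-1},S_n\}$, giving $M=\Q(\gamma)$ or $M=\Q$.

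Finally, to rule out $M=\Q(\gamma)$, observe that $M=\Q(\gamma)$ would mean $\Q(\gamma)\subseteq K$, hence $n=[\Q(\gamma):\Q]\leq [K:\Q]=d$, contradicting the hypothesis $n\geq d+1$. Therefore $M=\Q$, and the lemma follows upon taking probability. I expect no real obstacle here; the only thing worth being careful about is that $K$ need not lie in $L$, but this is harmless because the Galois correspondence is applied only to $M\subseteq L$, not to $K$ itself.
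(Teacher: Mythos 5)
Your proof is correct and follows essentially the same route as the paper: invoke van der Waerden's theorem to reduce to the case $\mathrm{Gal}(L/\Q)\cong S_n$, then study the subgroup $H\supseteq S_{n-1}$ fixing $K\cap\Q(\gamma)$, and finally rule out $H=S_{n-1}$ via the degree bound $n>[K:\Q]$. The one place where you diverge is the group-theoretic step identifying the possible $H$: you prove directly that $S_{n-1}$ is a maximal subgroup of $S_n$ (transitivity plus orbit--stabilizer), whereas the paper bounds the index of $H$ by $n$, invokes a classification of small-index subgroups, and then rules out index $2$ (since $S_{n-1}\not\subseteq A_n$) and index $n$ as two separate cases. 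Your variant is slightly cleaner: it leaves only one case to exclude, and it sidesteps the paper's preamble claim that $S_n$ has no subgroup of index strictly between $2$ and $n$ --- a claim which, as stated, overlooks the index-$3$ Sylow $2$-subgroups of $S_4$ (harmless for the paper's argument, since Lagrange already forbids an order-$8$ subgroup containing $S_3$, but your maximality argument avoids the issue outright).
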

\begin{proof}
Consider $\gamma\in \A_n$, such that Galois closure of $\Q(\gamma)$ has Galois group $S_n$. Let $\gamma_2,\dots,\gamma_n$ be the conjugates of $\gamma$ over $\Q$, so $L=\Q(\gamma,\gamma_2,\dots,\gamma_n)$ is the Galois closure of $\Q(\gamma)$. Let $H$ be the Galois group of $L$ over $K\cap \Q(\gamma)$, so $H\subseteq S_n$. Now, $K\cap \Q(\gamma)\subseteq \Q(\gamma)$, so $S_{n-1}\subseteq H$. This means that the index of $H$ is at most $n$. Therefore, index of $H$ is in $\{1,2,n\}$.
\begin{itemize}
    \item If the index of $H$ is $n$, then $H=S_{n-1}$, so $K\cap \Q(\gamma)=\Q(\gamma)$. This means that $Q(\gamma)\subseteq K$. But this is impossible since $[\Q(\gamma):\Q]=n>[K:\Q]$.
    \item If the index of $H$ is $2$, then $H=A_n$, but this is impossible since $S_{n-1}\subseteq H$ and $S_{n-1}\not\subseteq A_{n}$.
\end{itemize}
Therefore, $H$ has index $1$, which means $H=S_n$ and $K\cap \Q[\gamma]=\Q$.
Now, we have
\[1\geq \PPP_n[K\cap\Q(\gamma)=\Q]\geq \PPP_n\big[\text{Galois closure of }\Q(\gamma)\text{ has Galois group }S_n\big]=1.\qedhere\]
\end{proof}

We have proven the special case of Theorem \ref{K cap Qgamma is Q} when $[K:\Q]<n$. We will need the following lemma to complete the proof of Theorem \ref{K cap Qgamma is Q}.
\begin{lemma}\label{Lemma: Disc=my^2}
Given $n\geq 2$ and a square free integer $m$ ($m$ can be negative), we have
$$\PPP_n[\exists y\in\Z: \disc(F_{\gamma})=my^2]=0.$$
\end{lemma}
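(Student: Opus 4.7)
The plan is to recast the condition as a squarefulness problem for a specific polynomial in the coefficients, and then apply a quadratic-residue sieve. Since $m$ is squarefree, the condition $\disc(F_\gamma) = my^2$ for some $y\in\Z$ is equivalent to $m\cdot \disc(F_\gamma)$ being a perfect square in $\Z$: if $m\cdot \disc(F_\gamma) = z^2$, then $m\mid z^2$ forces $m\mid z$ (as $m$ is squarefree), so $y = z/m \in \Z$ works. Using $\#\A_n(H) = \tfrac{n}{2}\#\LL_n(H) \asymp H^{n+1}$ from Polya--Szego, it therefore suffices to show
\[
\#\{(a_0,\ldots,a_n) \in [-H,H]^{n+1} \cap \Z^{n+1}: m\cdot D(a_0,\ldots,a_n) \text{ is a perfect square in } \Z\} = o(H^{n+1}),
\]
where $D \in \Z[a_0,\ldots,a_n]$ is the generic discriminant polynomial of a degree-$n$ polynomial.

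To sieve, I would fix primes $p_1,\ldots,p_k$ coprime to $m$ and large enough that $D \bmod p_i$ is neither zero nor a square in $\F_{p_i}[a_0,\ldots,a_n]$; this is possible for all but finitely many $p$ because $D$ is an irreducible polynomial in $\Z[a_0,\ldots,a_n]$, a classical fact. For each such $p_i$, if $m\cdot \disc(F)$ is a square and $p_i \nmid \disc(F)$, then the Legendre symbol $\chi_{p_i}(D(F))$ equals $\chi_{p_i}(m)$. The Weil bound for the character sum $\sum_{F\in\F_{p_i}^{n+1}}\chi_{p_i}(D(F))$ gives that the density of $F\in\F_{p_i}^{n+1}$ with $\chi_{p_i}(D(F)) = \chi_{p_i}(m)$ is $\tfrac12 + O(p_i^{-1/2})$. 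The Chinese Remainder Theorem together with standard equidistribution of lattice points modulo a fixed integer implies, as $H\to\infty$, that the density of $F\in[-H,H]^{n+1}$ satisfying the $k$ local conditions $\chi_{p_i}(D(F))\in\{0,\chi_{p_i}(m)\}$ simultaneously is at most $\prod_{i=1}^k\bigl(\tfrac12 + O(p_i^{-1/2})\bigr) + o_H(1)$. Letting $H\to\infty$ first, then $k\to\infty$ with all $p_i$ large, drives this bound to $0$.

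The main technical obstacle is the Weil-type input. To apply it, one must verify that the discriminant polynomial $D$ is irreducible (hence not a square) in $\F_p[a_0,\ldots,a_n]$ for all but finitely many primes $p$; this follows from the irreducibility of $D$ over $\Z$, but the reduction requires a clean justification. The remaining ingredients (CRT, equidistribution of integer points in a box modulo a fixed integer, and control of the contribution from $F$ with $p_i\mid D(F)$, which is $O(p_i^{-1})$ by a codimension-one count) are straightforward lattice-counting manipulations in the same spirit as those appearing earlier in Section~\ref{Sec: Base ring Z}.
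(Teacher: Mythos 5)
Your plan is a genuine alternative to the paper's argument, and I believe it can be made to work, but the two proofs pull from quite different toolboxes. The paper avoids any Weil- or Deligne-type estimate altogether: it fixes the coefficients $a_0,\dots,a_{n-2},a_n$, uses Cohen's quantitative Hilbert irreducibility bound (Lemma~\ref{Hilbert Irr bound}) to discard the $O(H^{n-\frac12}\log H)$ tuples for which $\disc(f)(\dots,A_{n-1},\dots)$ is reducible in $\Z[A_{n-1}]$, and then for the good tuples applies Bombieri--Pila (Lemma~\ref{Pila}) to the plane curve $my^2=\disc(f)(\dots,A_{n-1},\dots)$ to count integer points with $|A_{n-1}|,|y|\ll H^{n-1}$. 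A separate ingredient is Lemma~\ref{An-1^n}, which guarantees $\deg_{A_{n-1}}\disc(f)\geq n$ so that the Bombieri--Pila exponent $\frac1d+\epsilon$ is small enough; this produces the power-saving bound $O(H^{n+1-\frac{1}{2n}})$. Your sieve, by contrast, only shows the density is $\leq\prod_i(\tfrac12+O(p_i^{-1/2}))\to 0$ with no explicit rate; that is enough for the lemma as stated, but strictly weaker in quantitative content.

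The one place where your write-up is under-justified rather than merely less precise is the character-sum estimate. The classical Weil bound is a statement about one variable; $\sum_{\mathbf a\in\F_p^{n+1}}\chi_p(D(\mathbf a))=O(p^{n+\frac12})$ is a genuinely multivariate estimate and needs either Deligne's work or, more elementarily, the Lang--Weil bound applied to the affine double cover $Z=\{z^2=mD(\mathbf a)\}\subset\mathbb A^{n+2}$ (writing $\#\{\mathbf a: mD(\mathbf a)\text{ is a square}\}=\tfrac12\#Z(\F_p)+\tfrac12\#\{D=0\}$ and noting $Z$ is geometrically irreducible over $\F_p$ for large $p$ precisely because $D$ is geometrically irreducible of degree $\geq 2$ over $\Q$). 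You should say this explicitly and cite Lang--Weil, since your ``sum over all of $\F_{p_i}^{n+1}$'' reading of ``the Weil bound'' is not automatic. Once that is in place, the CRT/equidistribution step you sketch is routine: fix $N=p_1\cdots p_k$, count lattice points in $[-H,H]^{n+1}$ in each admissible residue class mod $N$, take $H\to\infty$ first and then $k\to\infty$ with all $p_i$ chosen large enough that $\tfrac12+cp_i^{-1/2}<\tfrac34$. Finally, your reduction from the counting problem on $\A_n(H)$ to the counting problem on all coefficient tuples in $[-H,H]^{n+1}$ is harmless since you are proving an upper bound of the form $o(H^{n+1})$ and $\#\A_n(H)\asymp H^{n+1}$, as the paper records at the start of Section~\ref{Sec: Base ring Z}.
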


We will be using the following results from \cite{Pila} and \cite{Hilbert Irr} to prove Lemma \ref{Lemma: Disc=my^2}.

\begin{lemma}[\cite{Pila}]\label{Pila}
Given $\epsilon>0$ and $d\in\N_{\geq 1}$, there is a constant $c(d,\epsilon)$ such that
if $F$ is a subset of an irreducible, plane algebraic curve of degree $d$ inside a square of side
$H$, then the number of lattice points on $F$ is bounded by $c(d,\epsilon)H^{\frac{1}{d}+\epsilon}$.
\end{lemma}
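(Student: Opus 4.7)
The plan is to prove this via the determinant method of Bombieri--Pila, specialized by Pila to polynomial plane curves. The argument has three stages: reduce the curve to a controlled union of real analytic arcs; on each short sub-arc, force the lattice points to lie on an auxiliary polynomial of small degree via a determinant estimate; finish with B\'ezout's theorem.

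First I would cover the real locus of $F=0$ inside $[-H,H]^2$ by $O_d(1)$ monotone real analytic arcs. Away from the $O(d^2)$ singular points and the finitely many points where $\partial F/\partial y$ vanishes, the implicit function theorem lets me parametrize the curve locally as $y=f(x)$ with $f$ real analytic; a symmetric statement handles the remaining branches. I would then subdivide each arc into sub-arcs of diameter $\Delta := H^{1 - 1/d - \epsilon/2}$, producing $O_d(H^{1/d + \epsilon/2})$ sub-arcs in total.

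On a fixed sub-arc, choose $D = D(d,\epsilon)$ (to be fixed later) and set $N := \binom{D+2}{2}$, the number of monomials of degree at most $D$. Suppose the sub-arc contains at least $N$ distinct lattice points $P_1, \dots, P_N$. Form the $N\times N$ matrix $M$ with entries $M_{ij}=m_j(P_i)$, where $m_1,\dots,m_N$ is an enumeration of those monomials. The entries are integers, so either $\det(M)=0$ or $|\det(M)|\geq 1$. Substituting $y_i=f(x_i)$ and Taylor-expanding each column around the midpoint of the sub-arc, then performing row operations to extract a Vandermonde-like factor in the $x_i$, yields a bound of the form $|\det(M)|\leq C(d,D)\,\Delta^{s(D)}$ where $s(D)$ grows like $D^3$. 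Fixing $D$ large in terms of $\epsilon$ forces $\Delta^{s(D)}<1/C(d,D)$, hence $\det(M)=0$. A vanishing determinant produces a nonzero polynomial $G$ of degree at most $D$ vanishing on all lattice points of the sub-arc. After reducing $G$ modulo $F$ in the monomial basis I may assume $F\nmid G$, and B\'ezout's theorem bounds the lattice points on the sub-arc by $dD = O_{d,\epsilon}(1)$.

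Combining the two ingredients, the total count is $O_d(H^{1/d + \epsilon/2})\cdot O_{d,\epsilon}(1)$, which is absorbed into $c(d,\epsilon)H^{1/d+\epsilon}$ for a suitable constant. The main obstacle is the determinant estimate in the third stage: one has to choose a monomial ordering so that the Taylor expansion produces a block-triangular structure whose off-diagonal entries are of controlled order in $\Delta$, and then verify that the resulting cumulative exponent $s(D)$ really does outrun the constant $C(d,D)$ coming from derivatives of $f$ and the lengths of the Taylor tails. Bounding these auxiliary analytic quantities on the sub-arc uniformly in $H$, $d$, $D$ is a secondary nuisance but follows from standard estimates once the global parametrization of $F$ is in place; the genuine combinatorial heart of the argument is the Vandermonde-type reduction of $\det(M)$.
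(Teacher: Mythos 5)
The paper does not prove this lemma; it simply cites it from Bombieri and Pila's 1989 Duke paper, so there is no local proof to compare against. What you have attempted is a reconstruction of the Bombieri--Pila determinant method, and the high-level skeleton (cover by $O_d(1)$ analytic arcs, subdivide into short sub-arcs, use an integer determinant to force an auxiliary curve of bounded degree through the lattice points of each sub-arc, finish with B\'ezout) is indeed the correct strategy. However, there is a genuine gap in the middle stage, and it is not cosmetic.

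You build the $N\times N$ matrix from \emph{all} monomials of degree at most $D$, with $N=\binom{D+2}{2}$, and then plan to discard multiples of $F$ from the auxiliary polynomial $G$ afterward by ``reducing $G$ modulo $F$.'' This cannot work as stated. Once $\binom{D+2}{2}$ exceeds the Hilbert function of $\Z[x,y]/(F)$ in degree $\le D$ (which is roughly $dD$ for $D\ge d$), the monomials restricted to the curve are linearly dependent, so the determinant $\det(M)$ vanishes \emph{identically} for any $N$ points on the curve --- you learn nothing, and the kernel vector you extract may well be a multiple of $F$. Reducing modulo $F$ then gives $G\equiv 0$, and B\'ezout never enters. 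The actual Bombieri--Pila argument avoids this by taking only a \emph{basis of the quotient}: after a linear change of coordinates making $F$ monic of degree $d$ in $y$, one restricts to monomials $x^ay^b$ with $b\le d-1$. Then $G$ automatically has $y$-degree $<d$, so $F\nmid G$ for free, and the matrix size drops to $\approx dD$. This restriction is also precisely what produces the exponent $1/d$: with $\approx dD$ monomials whose total degree sum is $\approx dD^2/2$ and a Vandermonde exponent $\approx (dD)^2/2$, the balance condition on the sub-arc diameter $\delta=\Delta/H$ comes out to $\delta\lesssim H^{-1/d}$, hence $\approx H^{1/d}$ sub-arcs; with your full monomial set the bookkeeping does not yield $1/d$. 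Relatedly, your stated determinant bound $|\det(M)|\le C(d,D)\Delta^{s(D)}$ cannot be correct, since with your choice $\Delta=H^{1-1/d-\epsilon/2}\to\infty$ the right-hand side blows up rather than going below $1$; the correct estimate arises only after rescaling the box to $[-1,1]^2$, so it involves both $H^{\sum\deg m_j}$ and the rescaled spacing $\delta$, and one must then check that the two compete in the right way. The fix is to adopt the reduced monomial basis from the start and redo the exponent count there.
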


\begin{lemma}[\cite{Hilbert Irr}]\label{Hilbert Irr bound}
Let $f(X,t_1,\dots,t_s)\in \Z[X,t_1,\dots,t_s]$ be an irreducible polynomial. Then
\begin{equation*}
    \begin{split}
    &\#\big\{(\alpha_1,\dots,\alpha_s)\in \Z^s\mid |\alpha_i|\leq H, f(X,\alpha_1,\dots,\alpha_s)\text{ is not irreducible in } \Z[X]\big\}\\
    &=O(H^{s-\frac{1}{2}}\log(H)).    
    \end{split}
\end{equation*}
\end{lemma}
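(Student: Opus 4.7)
The plan is to bound
$$\#\{\gamma\in\A_n(H)\mid \exists y\in\Z,\ \disc(F_\gamma)=my^2\}$$
by $o(H^{n+1})$, which suffices since $\#\A_n(H)=\Theta(H^{n+1})$. The approach is to package the condition as a single irreducibility question and invoke Lemma \ref{Hilbert Irr bound} with the $n+1$ coefficients of $F_\gamma$ as parameters.

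Write $D(a_0,\ldots,a_n)\in\Z[a_0,\ldots,a_n]$ for the universal discriminant polynomial, so that $\disc(F_\gamma)=D(a_0,\ldots,a_n)$, and set
$$P(y,a_0,\ldots,a_n) \;=\; D(a_0,\ldots,a_n)-my^2 \;\in\; \Z[y,a_0,\ldots,a_n].$$
First I would verify that $P$ is irreducible. The crucial classical input is that the universal discriminant $D$ is itself irreducible: the discriminant locus $\{D=0\}$ is the image of the irreducible incidence variety of pairs (polynomial, repeated root), hence an irreducible hypersurface, and a partial-derivative check ensures $D$ is reduced. Granting this, $D/m$ is not a square in the rational function field $\Q(a_0,\ldots,a_n)$, because $D$ is non-constant and irreducible while $m$ is a fixed nonzero squarefree integer; so the quadratic $-my^2+D$ does not split over the fraction field, and since $P$ has $\Z$-content $1$ it is irreducible in $\Z[y,a_0,\ldots,a_n]$.

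Applying Lemma \ref{Hilbert Irr bound} to $P$ with main variable $X=y$ and $s=n+1$ parameters $(a_0,\ldots,a_n)$ yields
$$\#\{(a_0,\ldots,a_n)\in\Z^{n+1} : |a_i|\leq H,\ P(y,a_0,\ldots,a_n)\text{ reducible in }\Z[y]\} \;=\; O(H^{n+1/2}\log H).$$
For $\gamma\in\A_n(H)$, irreducibility of $F_\gamma$ forces $\disc(F_\gamma)\neq 0$, and a brief argument (using that $m$ is squarefree and hence $\disc(F_\gamma)/m$ is a rational square iff it is an integer square) shows that $-my^2+\disc(F_\gamma)$ admits a nontrivial factorization into positive-degree factors in $\Z[y]$ if and only if $\disc(F_\gamma)=mk^2$ for some $k\in\Z$. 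Since each coefficient tuple corresponds to a bounded (depending only on $n$) number of elements of $\A_n(H)$, this yields $\#\{\gamma\in\A_n(H):\exists y\in\Z,\ \disc(F_\gamma)=my^2\}=O(H^{n+1/2}\log H)=o(H^{n+1})$, completing the proof.

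The principal obstacle is establishing the irreducibility of the universal discriminant $D$: I would cite it as a classical fact (e.g., from Gelfand--Kapranov--Zelevinsky) or prove it via the incidence-variety argument above combined with the reducedness check. A complementary route making essential use of Lemma \ref{Pila} proceeds as follows: fix $(a_1,\ldots,a_n)$ and apply Pila to the plane curve $D(a_0)-my^2=0$ in the $(a_0,y)$-plane of total degree $\max(n-1,2)$, bounding its integer points by $O(H^{1/\max(n-1,2)+\epsilon})$ whenever the curve is irreducible, and handle the exceptional parameters (where the curve becomes reducible) via a Hilbert-type bound on the bivariate polynomial; summation then yields $O(H^{n+1/\max(n-1,2)+\epsilon})=o(H^{n+1})$ for every $n\geq 2$.
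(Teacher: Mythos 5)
The statement you were asked about is Lemma \ref{Hilbert Irr bound} itself, Cohen's quantitative Hilbert irreducibility bound. In the paper this is cited from \cite{Hilbert Irr} and is not proved. What you have written is not a proof of Lemma \ref{Hilbert Irr bound} at all: you take it (together with Lemma \ref{Pila}) as an input and use it to prove a different statement, namely Lemma \ref{Lemma: Disc=my^2}, i.e.\ $\PPP_n[\exists y\in\Z:\disc(F_\gamma)=my^2]=0$. So you have addressed the wrong statement; there is no paper proof of Lemma \ref{Hilbert Irr bound} to compare against, because it is an external citation.

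For what it is worth, as a proof of Lemma \ref{Lemma: Disc=my^2} your main route is genuinely different from the paper's. You take $X=y$ and $s=n+1$ parameters in $P=D-my^2$ and apply Lemma \ref{Hilbert Irr bound} once, avoiding Lemma \ref{Pila} entirely; the paper instead takes $A_{n-1}$ as the specialization variable (using Lemma \ref{An-1^n} to guarantee $\deg_{A_{n-1}}\disc\geq n$) and then invokes Lemma \ref{Pila} for the plane curve $my^2-\disc(f)(\alpha,A_{n-1})$. Your irreducibility check for $P$ (primitive, and $D/m$ not a square in $\Q(a_0,\dots,a_n)$ since $D$ is irreducible and non-constant) is sound. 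But there is a subtlety you only half-address: when $|m|>1$, the specialization $P(y,\alpha)=-my^2+D(\alpha)$ has $\Z[y]$-content equal to $\gcd(m,D(\alpha))$, which exceeds $1$ for a positive proportion of $\alpha$; so ``not irreducible in $\Z[y]$'' in the literal ring-theoretic sense is not a density-zero condition, and the conclusion of Lemma \ref{Hilbert Irr bound} must be read as reducibility over $\Q$ (loss of degree or a nontrivial positive-degree factorization). With that reading your reduction ``$P(y,\alpha)$ has a positive-degree factorization iff $D(\alpha)=mk^2$'' is exactly what you need and the route goes through, even giving a somewhat cleaner bound $O(H^{n+1/2}\log H)$ than the paper's $O(H^{n+1-\frac{1}{2n}})$; but you should make the intended notion of irreducibility explicit rather than implicitly toggling between $\Z[y]$ and $\Q[y]$. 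Your ``complementary route'' is essentially the paper's argument with $a_0$ in place of $A_{n-1}$; to make it precise you would need a lower bound on $\deg_{a_0}D$ playing the role of Lemma \ref{An-1^n}.
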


Let $A_0,A_1,\dots,A_n$ be indeterminates and $f(x)=A_0x^n+A_{1}x^{n-1}+\dots+A_n$.
It is a well known fact that $\disc(f)(A_0,A_1,\dots,A_n)$ is an irreducible, homogeneous polynomial of degree $2n-2$ in $\Z[A_0,\dots,A_n]$.
\begin{lemma}\label{An-1^n}
Express $\disc(f)(A_0,\dots, A_n)$ as a linear combination of monomials in $A_0,\ldots, A_n$. Consider $t_0,\dots,t_{n-2},t_{n}\in \Z_{\geq 0}$ with $t_0+\dots+t_{n-2}+t_n=n-2$. If $t_0\neq n-2$, then the coefficient of $A_{0}^{t_0}\dots A_{n-2}^{t_{n-2}}A_n^{t_n}A_{n-1}^{n}$ is zero. Moreover, the coefficient of $A_{n-1}^n A_0^{n-2}$ is $\pm (n-1)^{n-1}$.
\end{lemma}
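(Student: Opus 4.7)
The plan is to exploit two homogeneities of $\disc(f)$ viewed as a polynomial in $A_0,\dots,A_n$: the ordinary total-degree homogeneity of degree $2n-2$, and a weighted-homogeneity of degree $n(n-1)$ obtained by assigning weight $i$ to the variable $A_i$. I would derive the weighted-homogeneity from the substitution $f(x)\mapsto \lambda^{n}f(x/\lambda)$, which preserves the leading coefficient, multiplies each root $\alpha_i$ by $\lambda$, and sends $A_i\mapsto\lambda^{i}A_i$; applied to the root formula $\disc(f)=A_0^{2n-2}\prod_{i<j}(\alpha_i-\alpha_j)^2$, it multiplies $\disc(f)$ by $\lambda^{n(n-1)}$.

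With these two homogeneities in hand, the first assertion falls out immediately. For a monomial $A_0^{t_0}A_1^{t_1}\cdots A_{n-2}^{t_{n-2}}A_{n-1}^{n}A_n^{t_n}$ to appear in $\disc(f)$, the weight identity reads
$$t_1+2t_2+\cdots+(n-2)t_{n-2}+(n-1)n+n\,t_n \;=\; n(n-1),$$
which simplifies to $t_1+2t_2+\cdots+(n-2)t_{n-2}+n\,t_n=0$. Non-negativity forces $t_1=\cdots=t_{n-2}=t_n=0$, and then the total-degree relation $t_0+n=2n-2$ forces $t_0=n-2$.

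To extract the coefficient of $A_0^{n-2}A_{n-1}^{n}$, I would specialize $A_1=\cdots=A_{n-2}=A_n=0$ and compute $\disc(A_0 x^n+A_{n-1}x)$ directly. Factoring as $x(A_0 x^{n-1}+A_{n-1})$ isolates a root at $0$ and reduces matters to the roots $\beta_1,\dots,\beta_{n-1}$ of $g(x)=A_0 x^{n-1}+A_{n-1}$. Splitting off the zero root from the product formula gives
$$\disc(f) \;=\; A_0^{2n-2}\Bigl(\prod_{j=1}^{n-1}\beta_j\Bigr)^{2}\!\!\prod_{1\le i<j\le n-1}\!\!(\beta_i-\beta_j)^2.$$
By Vieta applied to $g$, $\prod_j\beta_j=(-1)^{n-1}A_{n-1}/A_0$, while $\prod_{i<j}(\beta_i-\beta_j)^2=\disc(g)/A_0^{2n-4}$. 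Since $g$ is $A_0$ times the monic polynomial $x^{n-1}-(-A_{n-1}/A_0)$, the classical identity $\disc(x^m-c)=\pm\, m^m c^{m-1}$ yields $\disc(g)=\pm(n-1)^{n-1}A_0^{n-2}A_{n-1}^{n-2}$. Assembling the three factors, the powers of $A_0$ telescope to $A_0^{n-2}$ and give $\disc(f)=\pm(n-1)^{n-1}A_0^{n-2}A_{n-1}^{n}$, so the desired coefficient is $\pm(n-1)^{n-1}$.

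There is no substantial obstacle here: once the weighted-scaling law is proved, the first part is a one-line weight computation, and the second part is a clean specialization to a polynomial with only two non-zero coefficients. Because the statement only pins down the leading coefficient up to sign, the various sign conventions in Vieta's formula and in the discriminant of $x^m-c$ need not be tracked.
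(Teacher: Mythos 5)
Your proof is correct, and it takes a genuinely different route from the paper. The paper expands the determinant of the $(2n-1)\times(2n-1)$ Sylvester matrix for $\operatorname{Res}(f,f')$ and runs a backward induction over columns to show that, once $A_{n-1}^n$ is forced, the remaining choices are rigid, landing on $A_0^{n-2}$ with the factor $(n-1)^{n-1}$. You instead observe that $\disc(f)$ carries a second, weighted homogeneity of degree $n(n-1)$ (weight $i$ on $A_i$), obtained from the substitution $f(x)\mapsto\lambda^n f(x/\lambda)$ together with $\disc(f)=A_0^{2n-2}\prod_{i<j}(\alpha_i-\alpha_j)^2$; the first claim then drops out from $t_1+2t_2+\cdots+(n-2)t_{n-2}+n\,t_n=0$, and the second is computed by specializing to $f=A_0x^n+A_{n-1}x$, peeling off the root at $0$, and invoking $\disc(x^{m}-c)=\pm m^m c^{m-1}$ with $m=n-1$. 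Both arguments are complete; yours replaces a delicate combinatorial analysis of the Sylvester determinant with two standard structural facts about discriminants, which makes the vanishing statement essentially automatic, while the paper's proof has the minor advantage of being purely matrix-algebraic and self-contained (it does not need the root-product formula or the classical discriminant of a binomial). One small remark: the weighted homogeneity alone already forces $t_1=\cdots=t_{n-2}=t_n=0$, and then the hypothesis $\sum t_i=n-2$ given in the statement pins $t_0=n-2$, so you do not separately need to appeal to the ordinary degree-$(2n-2)$ homogeneity for the first claim.
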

\begin{proof}
It is known that $\disc(f)(A_0,\dots,A_n)$ can be expressed in terms of the Sylvester matrix as follows:
$$\disc(f)(A_0,\dots,A_n)=\frac{(-1)^{\frac{n(n-1)}{2}}}{A_n}\det(S(f(x),f'(x))).$$
The Sylvester matrix is a $2n-1\times 2n-1$ matrix. Its non-zero entries are as follows:
\begin{equation*}
    a_{ij}=\begin{cases}
        A_{n+i-j} &\text{if } 1\leq i\leq n-1, i\leq j\leq i+n,\\
        (i+1-j)A_{i+1-j} &\text{if } n\leq i\leq 2n-1, i-(n-1)\leq j\leq i.
    \end{cases}
\end{equation*}
Below we write the Sylvester matrix for $n=5$.
\[S(f(x),f'(x))=
\begin{bmatrix}
A_5 &A_{4}& A_{3} &A_{2} &A_{1} &A_{0} & 0&0 &0\\
0 &A_5 &A_{4}&A_{3} &A_{2} & A_{1}& A_{0}& 0  &0\\
0&0 &A_5 &A_{4}&A_{3} &A_{2} & A_{1}& A_{0}& 0\\
0&0&0 &A_5 &A_{4}&A_{3} &A_{2} & A_{1}& A_{0}\\
5A_5 &4A_{4}& 3A_{3} &2A_{2} &1A_{1} &0 & 0&0 &0\\
0 &5A_5 &4A_{4}&3A_{3} &2A_{2} & 1A_{1}& 0& 0  &0\\
0&0 &5A_5 &4A_{4}&3A_{3} &2A_{2} & 1A_{1}& 0& 0\\
0&0&0 &5A_5 &4A_{4}&3A_{3} &2A_{2} & 1A_{1}& 0\\
0&0&0&0 &5A_5 &4A_{4}&3A_{3} &2A_{2} & 1A_{1}\end{bmatrix}.\]
Note that there are $n-1$ rows in the top part of the matrix and $n$ rows in the bottom part.
We want to compute the coefficient of $A_0^{t_0}\dots A_{n-2}^{t_{n-2}}A_n^{1+t_{n}}A_{n-1}^{n}$ in this determinant. The determinant is expanded by picking $2n-1$ entries at a time such that we get an entry from each row and each column. We have $A_{n-1}$ in columns $2$ to $n+1$, and in no other columns. Therefore, in order to obtain $A_0^{t_0}\dots A_{n-2}^{t_{n-2}}A_n^{1+t_{n}}A_{n-1}^{n}$, we must pick a $A_{n-1}$ in the $j^{th}$ column for each $2\leq j\leq n+1$.

We claim that in order to obtain $A_0^{t_0}\dots A_{n-2}^{t_{n-2}}A_n^{1+t_{n}}A_{n-1}^{n}$, for each $j$ in $[3,n+1]$, in the $j^{th}$ column we must pick the $(n-1)A_{n-1}$ in the bottom part of the matrix and for each $j$ in $[3,n]$, in the $(j+n-1)^{th}$ column we must pick the $A_0$ in the top part of the matrix. We prove this by backward induction on $j$. For $j=n+1$, note that the $(n+1)^{th}$ column only has one $A_{n-1}$ in it, $a_{2n-1,n+1}=(n-1)A_{n-1}$. So in the $(n+1)^{th}$ column, we must pick the $A_{n-1}$ in the bottom part.

Next, assume the induction hypothesis that for some $j\in [3,n]$, we know that for each $k\in [j+1,n+1]$, in the $k^{th}$ column we must pick the $A_{n-1}$ in the bottom part of the matrix and for each $k\in [j+1,n]$, in the $(k+n-1)^{th}$ column we must pick the $A_0$ in the top part of the matrix. Now, the non-zero entries in the $(j+n-1)^{th}$ column are $a_{t,j+n-1}$ for $j-1\leq t\leq n-1$ and $j+n-1\leq t\leq 2n-1$. However, for $j\leq t\leq n-1$ we have already chosen $a_{t,t+1+(n-1)}=A_0$. For $j+n-1\leq t\leq 2n-1$, we have already chosen $a_{t,t+2-n}=(n-1)A_{n-1}$. Therefore, in the $(j+n-1)^{th}$ column we must choose $a_{j-1,j+n-1}=A_0$ (it is in the top part).
Next, in the $j^{th}$ column, there are two $A_{n-1}$: $a_{j-1,j},a_{n+j-2,j}$. However, we have already chosen $A_0=a_{j-1,j+n-1}$. Therefore, we must choose $a_{n+j-2,j}=(n-1)A_{n-1}$ (it is in the bottom part). This completes the induction step.

We have shown that for $3\leq j\leq n+1$, we must pick $a_{n+2-j,j}=(n-1)A_{n-1}$ and for $3\leq j\leq n$, we must choose $a_{j-1,j-1+n}=A_0$. This means that columns $3$ to $2n-1$ contribute $(n-1)^{n-1}A_0^{n-2}A_{n-1}^{n-1}$. Now, columns $1,2$ and rows $1,n$ are left. The entries are $a_{1,1}=A_n$, $a_{1,2}=A_{n-1}$, $a_{n,1}=nA_n$ and $a_{n,2}=(n-1)A_{n-1}$. The contribution from here is $-A_{n}A_{n-1}$.

Therefore, if $t_0\neq n-2$, then the coefficient of $A_0^{t_0}\dots A_{n-2}^{t_{n-2}}A_n^{1+t_{n}}A_{n-1}^{n}$ in this determinant is $0$. Whereas, the coefficient of $A_0^{n-2}A_n^{1}A_{n-1}^{n}$ is $-(n-1)^{n-1}$. The result follows.
\end{proof}

\begin{proof}[Proof of Lemma \ref{Lemma: Disc=my^2}]
We know that $\disc(f)(A_0,\dots,A_n)$ is a homogeneous polynomial of degree $2n-2$. Therefore there is a constant $C$ (which only depends on $n$) such that for all $\alpha_0,\dots,\alpha_n\in \Z$ with $|\alpha_i|\leq H$, we have $|\disc(f)(\alpha_0,\dots,\alpha_n)|\leq C H^{2n-2}$. Thus if $\gamma\in A_n(H)$ and $\disc(F_{\gamma})=my^2$ then $|y|\leq \sqrt{C} H^{n-1}$.

Let $d=\deg_{A_{n-1}}(\disc(f))$.
We know from Lemma \ref{An-1^n} that $d\geq n$. Let $I(\Z[A_{n-1}])$ be the set of irreducible polynomials in $\Z[A_{n-1}]$. By Lemma \ref{Hilbert Irr bound}, we know that
\begin{equation*}
    \begin{split}
    \#&\big\{(\alpha_1,\dots,\alpha_n)\in \Z^n\mid |\alpha_i|\leq H, \disc(f)(\alpha_1,\dots,\alpha_{n-1},A_{n-1},\alpha_{n})\notin I(\Z[A_{n-1}])\big\}\\
    &=O(H^{n-\frac{1}{2}}\log(H)).
    \end{split}
\end{equation*}
Given $(\alpha_1,\dots,\alpha_n)\in[-H,H]^n$ such that $\disc(f)(\alpha_1,\dots,\alpha_{n-1},A_{n-1},\alpha_{n})\in I(\Z[A_{n-1}])$, we see that $my^2-\disc(f)(\alpha_1,\dots,\alpha_{n-1},A_{n-1},\alpha_{n})$ is an irreducible curve in variables $y,A_{n-1}$. Pick $0<\epsilon<\frac{1}{2n(n-1)}$. The number of $\alpha\in \Z$ with $|\alpha|\leq H$, such that $\disc(f)(\alpha_1,\dots,\alpha_{n-1},\alpha,\alpha_n)=my^2$ has a solution (for $y$) is at most the number of solutions to $my^2-\disc(f)(\alpha_1,\dots,\alpha_{n-1},A_{n-1},\alpha_{n})$ with $|A_{n-1}|,|y|\leq \sqrt{C}H^{n-1}$. By Lemma \ref{Pila}, this is at most
$$c(d,\epsilon)\left(\sqrt{C}H^{n-1}\right)^{\frac{1}{d}+\epsilon} =O(H^{\frac{n-1}{d}+\epsilon(n-1)})=O(H^{1-\frac{1}{2n}}).$$
Now,
\begin{equation*}
    \begin{split}
    \#&\{\gamma\in A_{n}(H)\mid \exists y\in\Z:\disc(F_{\gamma}(x))=my^2\}\\
    &\leq n\#\{g(x)=a_nx^n+\dots+a_0\in\Z[x]\mid |a_i|\leq H,\exists y\in\Z:\disc(g(x))=my^2\}\\
    &\leq n\#\{(\alpha_1,\dots,\alpha_n)\in\Z^{n}\mid |\alpha_i|\leq H, \disc(f)(\alpha_1,\dots,\alpha_{n-1},A_{n-1},\alpha_{n})\notin I(\Z[A_{n-1}])\}(2H+1)\\
    &+n\sum_{\substack{(\alpha_1,\dots,\alpha_n)\in\Z^{n} \\|\alpha_i|\leq H \\\disc(f)(\alpha_1,\dots,A_{n-1},\alpha_{n})\in I(\Z[A_{n-1}])}}\#\{\alpha\in\Z\mid |\alpha|\leq H, \exists y\in \Z: \disc(f)(\alpha_1,\dots,\alpha_{n-1},\alpha,\alpha_n)=my^2\}\\
    &\leq n O\left(H^{n-\frac{1}{2}}\log(H)\right)(2H+1) +n (2H+1)^n O(H^{1-\frac{1}{2n}})\\
    &=O(H^{n+1-\frac{1}{2n}}).
    \end{split}
\end{equation*}
Therefore,
\begin{align*} 
    \PPP_n[\exists y\in\Z: \disc(F_{\gamma})=my^2]
    &=\lim_{H\to\infty} \frac{\#\{\gamma\in A_{n}(H)\mid \exists y\in\Z:\disc(F_{\gamma}(x))=my^2\}}{\#\A_n(H)}\\
    &\leq \lim_{H\to\infty} \frac{O(H^{n+1-\frac{1}{2n}})}{\frac{n}{2}\frac{(2H)^{n+1}}{\zeta(n+1)}+O_n(nH^n\log^2(H))}=0.  \qedhere
    \end{align*}
\end{proof}

\begin{customthm}{\ref{K cap Qgamma is Q}}
Given a fixed field $K$ and positive integer $n$, we have
$$\PPP_n[K\cap \Q(\gamma)=\Q]=1.$$
\end{customthm}
\begin{proof}
Fix $n$. We induct on $[K:\Q]$.
The base cases $[K:\Q]\leq n-1$ are provided by Lemma \ref{n> deg+1}.

Now consider some $K$ with $[K:\Q]\geq n$. From the induction hypothesis we know that for all proper subfields $K'\subsetneq K$, we have $\PPP_n[K'\cap \Q(\gamma)=\Q]=1$.
This implies that
$$\PPP_n[K\cap \Q(\gamma)=\Q]=1-\PPP_n[K\cap \Q(\gamma)=K].$$
Note that if $K\cap \Q(\gamma)=K$, then $K\subseteq \Q(\gamma)$. However $[\Q(\gamma):\Q]=n$ and $[K:\Q]\geq n$. This forces $K=\Q(\gamma)$.
This means that
$$\PPP_n[K\cap \Q(\gamma)=\Q]=1-\PPP_n[K=\Q(\gamma)].$$

Now if $[K:\Q]\geq n+1$, then $\PPP_n[K=\Q(\gamma)]=0$ and we are done. Therefore suppose $[K:\Q]=n$.

\begin{itemize}
    \item Case 1: The Galois closure of $K$ does not have Galois group $S_n$. We know by Proposition \ref{Galois group Sn}, that
    $$\PPP_n\big[\text{Galois closure of }\Q(\gamma)\text{ has Galois group }S_n\big]=1.$$
    Therefore $\PPP_n[K=\Q(\gamma)]=0$.
    \item Case 2: The Galois closure of $K$ does have Galois group $S_n$. Say Galois closure is $L$. The unique subgroup of index $2$ in $S_n$ is $A_n$ and hence $L$ has a unique quadratic subfield $K_1$. Say $K_1=\Q(\sqrt{m})$, for some square-free $m\in\Z$. Now if $K=\Q(\gamma)$ for some $\gamma\in \A_n$. Then $K_1=\Q(\sqrt{\disc(F_{\gamma})})$. Therefore
    $$\PPP_n[K=\Q(\gamma)]\leq \PPP_n[\exists y\in\Z: \disc(F_{\gamma})=my^2]=0,$$
    where we applied Lemma \ref{Lemma: Disc=my^2} in the last step.
\end{itemize}
In both cases we see that $\PPP_n[K=\Q(\gamma)]=0$ and hence $\PPP_n[K\cap \Q(\gamma)=\Q]=1$. This completes the induction step and hence the proof.
\end{proof}

\section{Base ring $\OO_K$}\label{Sec: Base ring OK}
In this section we will prove Proposition \ref{Stable prob 1}, Proposition \ref{Prob of X(K,gamma)}, Theorem \ref{Ok[gamma] probability} and Proposition \ref{Prop prob X subset X(K gamma)}. We will need the following Lemma from \cite{Paper 1}.

\begin{lemma}\cite[Lemma 3,1]{Paper 1}\label{going up going down}
If $K\subseteq L$ are number fields, then
$$X(K,\gamma)=\{\p\in\Spec(\OO_K)\mid \text{for every }\q\in\Spec(\OO_L), \text{ if } \q\cap\OO_K=\p \text{ then } \q\in X(L,\gamma)\},$$
$$X(L,\gamma)\supseteq\{\q\in\Spec(\OO_L)\mid \q\cap\OO_K\in X(K,\gamma)\}.$$
Moreover if $L\cap K(\gamma)=K$, then we have
$$X(L,\gamma)=\{\q\in\Spec(\OO_L)\mid \q\cap\OO_K\in X(K,\gamma)\}.$$
\end{lemma}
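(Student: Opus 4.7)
The plan is to drive everything through the third equivalent condition in Theorem~\ref{X cond equiv}, which says $\p\in X(K,\gamma)$ if and only if $v_{\q^*}(\gamma)<0$ for every $\q^*\in\Spec(\OO_{K[\gamma]})$ with $\q^*\cap\OO_K=\p$. The only elementary transfer tool I need is that for any prime $\tilde{\q}\in\Spec(\OO_{L[\gamma]})$ contracting to $\q^*\in\Spec(\OO_{K[\gamma]})$, one has $v_{\tilde{\q}}(\gamma)=e(\tilde{\q}\mid\q^*)\,v_{\q^*}(\gamma)$, so the two valuations share the same sign.

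First I would prove the opening equality. Fix $\p\in\Spec(\OO_K)$ and set
$$S_K=\{\q^*\in\Spec(\OO_{K[\gamma]}):\q^*\cap\OO_K=\p\},\quad S_L=\{\tilde{\q}\in\Spec(\OO_{L[\gamma]}):\tilde{\q}\cap\OO_K=\p\}.$$
Contraction sends $S_L$ into $S_K$, and going-up in the extension $\OO_{K[\gamma]}\subseteq\OO_{L[\gamma]}$ shows that every $\q^*\in S_K$ sits under some $\tilde{\q}\in S_L$. Combined with the transfer identity above, the condition ``$v_{\q^*}(\gamma)<0$ for all $\q^*\in S_K$'' is equivalent to ``$v_{\tilde{\q}}(\gamma)<0$ for all $\tilde{\q}\in S_L$''. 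The former is $\p\in X(K,\gamma)$; the latter, after grouping $S_L$ by contraction to $\Spec(\OO_L)$ and invoking condition~(3) of Theorem~\ref{X cond equiv} applied over $L$, is exactly the right-hand side of the first displayed equality. The displayed inclusion then follows at once: if $\q\cap\OO_K=\p\in X(K,\gamma)$ and $\tilde{\q}\in\Spec(\OO_{L[\gamma]})$ lies over $\q$, then $\tilde{\q}$ contracts to some $\q^*\in S_K$ with $v_{\q^*}(\gamma)<0$, forcing $v_{\tilde{\q}}(\gamma)<0$ and hence $\q\in X(L,\gamma)$.

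The main obstacle, and the only place the hypothesis $L\cap K(\gamma)=K$ actually enters, is the reverse inclusion in the moreover statement. I need to show: if $\p=\q\cap\OO_K\notin X(K,\gamma)$, then $\q\notin X(L,\gamma)$. Picking $\q^*\in S_K$ with $v_{\q^*}(\gamma)\geq 0$, it suffices to exhibit a single prime $\tilde{\q}\in\Spec(\OO_{L[\gamma]})$ lying over both $\q\subset\OO_L$ and $\q^*\subset\OO_{K[\gamma]}$, since then $v_{\tilde{\q}}(\gamma)=e(\tilde{\q}\mid\q^*)\,v_{\q^*}(\gamma)\geq 0$ rules out $\q\in X(L,\gamma)$. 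The hypothesis $L\cap K(\gamma)=K$ forces $L\otimes_K K(\gamma)=L(\gamma)$, so $\OO_L\otimes_{\OO_K}\OO_{K[\gamma]}$ embeds in $\OO_{L[\gamma]}$ as an integral subring; by going-up it is enough to find a prime of this tensor product lying above the pair $(\q,\q^*)$. Reducing modulo $\p$ identifies such primes with primes of $k(\q)\otimes_{k(\p)}k(\q^*)$, which is a nonzero finite-dimensional $k(\p)$-algebra (a tensor product of field extensions over a common subfield) and so has a maximal ideal. Lifting via going-up produces the required $\tilde{\q}$ and completes the argument.
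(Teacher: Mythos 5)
Your treatment of the first displayed equality and the unconditional inclusion is sound: they are straightforward consequences of condition (iii) of Theorem~\ref{X cond equiv}, the valuation transfer $v_{\tilde{\q}}(\gamma)=e(\tilde{\q}\mid\q^*)\,v_{\q^*}(\gamma)$, and going--up in $\OO_{K[\gamma]}\subseteq\OO_{L[\gamma]}$.

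The ``moreover'' direction, however, has a genuine gap. You assert that $L\cap K(\gamma)=K$ forces $L\otimes_K K(\gamma)\cong L(\gamma)$, i.e.\ that $L$ and $K(\gamma)$ are linearly disjoint over $K$. That implication is false in general: trivial intersection only yields linear disjointness when one of the two extensions is Galois over $K$. A standard counterexample is $K=\Q$, $L=\Q(\sqrt[3]{2})$, $K(\gamma)=\Q(\omega\sqrt[3]{2})$ (with $\omega$ a primitive cube root of unity); here $L\cap K(\gamma)=\Q$, yet $L(\gamma)=\Q(\sqrt[3]{2},\omega)$ has degree $6$ rather than $9$, so the multiplication map $L\otimes_\Q K(\gamma)\to L(\gamma)$ has a nontrivial kernel. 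Since this is the only place in your argument where the hypothesis $L\cap K(\gamma)=K$ enters, the step it is meant to justify does not go through.

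The consequence is not cosmetic. Your strategy picks a single $\q^*\in S_K$ with $v_{\q^*}(\gamma)\geq 0$ and demands a prime $\tilde{\q}$ of $\OO_{L[\gamma]}$ lying over both that $\q^*$ and the given $\q$; this is exactly the statement that the map $\Spec(\OO_{L[\gamma]})\to\Spec(\OO_L)\times_{\Spec(\OO_K)}\Spec(\OO_{K[\gamma]})$ hits the point $(\q,\q^*)$. In the Galois picture, if $G=\mathrm{Gal}(M/K)$, $H_L$ and $H_\gamma$ are the subgroups fixing $L$ and $K(\gamma)$, and $D$ is a decomposition group, the pairs of primes realized by a common prime upstairs are exactly those of the form $(H_L gD,\;H_\gamma gD)$, and one needs $H_L H_\gamma g_1 D=G$; the hypothesis only gives $\langle H_L,H_\gamma\rangle=G$, and in general $H_L H_\gamma$ is a proper subset (as in the $S_3$ example above). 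So without linear disjointness the lifting you need may simply not exist, and what you actually must show — that among all $\tilde{\q}$ lying over $\q$, at least one contracts to \emph{some} bad $\q^*$ — is a different and harder statement. As written, the argument does not address it.
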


\begin{customprop}{\ref{Stable prob 1}}
Given a number field $K$ and integer $n\geq 2$, we have
$$\PPP_n[X(K,\gamma)\text{ is stable }]=1.$$
\end{customprop}
\begin{proof}
From Lemma \ref{going up going down}, it follows that if $\Q(\gamma)\cap K=\Q$, then $X(K,\gamma)$ is stable. Therefore, we are done by Theorem \ref{K cap Qgamma is Q}.
\end{proof}

\begin{lemma}\label{Lem X(K gamma) in fancy X}
Given $\X\subseteq \Pf(\Spec(\OO_K))$, let
$$\Y=\Big\{Y\in \Pf(\Spec(\Z))\mid \{\p\in \Spec(\OO_K)\mid \p\cap\Z\in Y\}\in \X\Big\}.$$
Then, we have
$$\PPP_n[X(K,\gamma)\in \X]=\PPP_n[X(\Q,\gamma)\in\Y]=\frac{\zeta(n+1)}{\zeta(n)}\sum_{Y\in \Y}\prod_{p\in Y}\beta_{p,n}.$$
\end{lemma}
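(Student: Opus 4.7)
The plan is to reduce this statement to the already-established rational case (Lemma \ref{Lem X(Q gamma) in fancy X}) by exploiting the almost-sure condition $\Q(\gamma)\cap K=\Q$ furnished by Theorem \ref{K cap Qgamma is Q}. Indeed, the definition of $\Y$ is tailored so that the map $Y\mapsto\{\p\in\Spec(\OO_K)\mid \p\cap\Z\in Y\}$ is exactly the correspondence between $\Y$ and $\X$; what we need is that the analogous correspondence sends $X(\Q,\gamma)$ to $X(K,\gamma)$ with probability one.

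First I would invoke Theorem \ref{K cap Qgamma is Q} to restrict attention to the set of full $\PPP_n$-measure on which $\Q(\gamma)\cap K=\Q$. For such $\gamma$, Lemma \ref{going up going down} applied to the inclusion $\Q\subseteq K$ yields the explicit identification
$$X(K,\gamma)=\{\p\in\Spec(\OO_K)\mid \p\cap\Z\in X(\Q,\gamma)\}.$$
The right-hand side is genuinely a \emph{finite} subset of $\Spec(\OO_K)$, since $X(\Q,\gamma)$ is finite and each rational prime has only finitely many primes of $\OO_K$ above it; so it is a well-defined element of $\Pf(\Spec(\OO_K))$ to test membership in $\X$.

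Next, reading off the definition of $\Y$, this identification gives the pointwise equivalence
$$X(K,\gamma)\in \X\iff X(\Q,\gamma)\in \Y,$$
valid outside a measure-zero set. Consequently $\PPP_n[X(K,\gamma)\in\X]=\PPP_n[X(\Q,\gamma)\in\Y]$, and the second equality of the lemma then follows by plugging into Lemma \ref{Lem X(Q gamma) in fancy X}.

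I do not expect any serious obstacle: the bridge between the base rings $\Z$ and $\OO_K$ is already built into Lemma \ref{going up going down}, and the hypothesis needed to upgrade its containment to an equality — namely $L\cap K(\gamma)=K$ in the notation there, which here specializes to $\Q(\gamma)\cap K=\Q$ — is precisely the generic condition granted by Theorem \ref{K cap Qgamma is Q}. The only minor point that deserves an explicit sentence is the finiteness check above; everything else is bookkeeping.
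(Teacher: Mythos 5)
Your proposal is correct and follows essentially the same route as the paper: condition on the full-measure event $\Q(\gamma)\cap K=\Q$ (Theorem \ref{K cap Qgamma is Q}), use the equality case of Lemma \ref{going up going down} to identify $X(K,\gamma)$ with the pullback of $X(\Q,\gamma)$, and then apply Lemma \ref{Lem X(Q gamma) in fancy X}. The explicit finiteness remark is a harmless extra sentence that the paper leaves implicit.
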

\begin{proof}
By Theorem \ref{K cap Qgamma is Q}, Lemma \ref{Lem X(Q gamma) in fancy X} and Lemma \ref{going up going down}, we know that
\begin{equation*}
    \begin{split}
    \PPP_n[X(K,\gamma)\in\X]
    &= \PPP_n[X(K,\gamma)\in\X \text{ and } \Q(\gamma)\cap K=\Q]\\
    &= \PPP_n[X(\Q,\gamma)\in\Y \text{ and } \Q(\gamma)\cap K=\Q]\\
    &=\PPP_n[X(\Q,\gamma)\in\Y]
    =\frac{\zeta(n+1)}{\zeta(n)}\sum_{Y\in \Y}\prod_{p\in Y}\beta_{p,n}.\qedhere
    \end{split}
\end{equation*}
\end{proof}

\begin{customprop}{\ref{Prob of X(K,gamma)}}
Given $n\geq 2$, a number field $K$ and a finite, stable subset $X\subseteq \Spec(\OO_K)$. Let $X_1=\{\p\cap\Z\mid \p\in X\}$, then we have
$$\PPP_n\big[X(K,\gamma)=X\big]
    =\frac{\zeta(n+1)}{\zeta(n)}\prod_{p\in X_1}\beta_{p,n}.$$
\end{customprop}
\begin{proof}
This follows from Lemma \ref{Lem X(K gamma) in fancy X}.
\end{proof}

\begin{proposition}\label{Prop prob X(K gamma) subset X}
Given a number field $K$, a subset $X\subseteq \Spec(\OO_K)$ and $n\geq 2$.
Let $Y=\{p\in \Spec(\Z)\mid \text{all primes of }\OO_K \text{ above }p \text{ are in } X\}$. Then
$$\PPP_n[X(K,\gamma)\subseteq X]=\prod_{p\notin Y}\alpha_{p,n}.$$
\end{proposition}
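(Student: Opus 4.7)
The plan is to apply Lemma \ref{Lem X(K gamma) in fancy X} with the family $\X=\{W\in\Pf(\Spec(\OO_K))\mid W\subseteq X\}$, which is essentially the number-field version of the argument used to prove Proposition \ref{X(Q,gamma) subset X}. Since $X(K,\gamma)$ is always a finite set, the event $\{X(K,\gamma)\subseteq X\}$ coincides with $\{X(K,\gamma)\in\X\}$, so the lemma gives
$$\PPP_n[X(K,\gamma)\subseteq X]=\frac{\zeta(n+1)}{\zeta(n)}\sum_{Y'\in\Y}\prod_{p\in Y'}\beta_{p,n},$$
where $\Y=\{Y'\in\Pf(\Spec(\Z))\mid \{\p\in\Spec(\OO_K):\p\cap\Z\in Y'\}\subseteq X\}$.

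The next step is to identify $\Y$ explicitly. For a finite $Y'\subseteq\Spec(\Z)$, the condition $\{\p\in\Spec(\OO_K):\p\cap\Z\in Y'\}\subseteq X$ unpacks as: for every rational prime $p\in Y'$, every prime of $\OO_K$ lying above $p$ is in $X$. By the definition of $Y$, this is exactly the condition $Y'\subseteq Y$. Hence $\Y$ is the collection of all finite subsets of $Y$.

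The remainder is an Euler-product manipulation identical in spirit to the proof of Proposition \ref{X(Q,gamma) subset X}. Using $\beta_{p,n}=O(p^{-n})$ with $n\geq 2$ to justify absolute convergence, we factor
$$\sum_{\substack{Y'\subseteq Y\\ Y'\text{ finite}}}\prod_{p\in Y'}\beta_{p,n}=\prod_{p\in Y}(1+\beta_{p,n})=\prod_{p\in Y}\alpha_{p,n}^{-1},$$
where the last equality uses the identity $\alpha_{p,n}(1+\beta_{p,n})=1$. Combining this with $\tfrac{\zeta(n+1)}{\zeta(n)}=\prod_p\alpha_{p,n}$ yields
$$\PPP_n[X(K,\gamma)\subseteq X]=\prod_p\alpha_{p,n}\cdot\prod_{p\in Y}\alpha_{p,n}^{-1}=\prod_{p\notin Y}\alpha_{p,n},$$
as desired.

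There is really no serious obstacle here: Lemma \ref{Lem X(K gamma) in fancy X} (which in turn rests on Theorem \ref{K cap Qgamma is Q}) does all the heavy lifting by reducing the $\OO_K$ question to the $\Z$ question. The only mildly delicate point is the bookkeeping in the translation from $\X$ to $\Y$, where one must be careful that the correct set of rational primes is $Y$ (those with \emph{all} primes above them in $X$) rather than, say, the set of primes with \emph{some} prime above them in $X$; this is exactly what forces the final product to range over $p\notin Y$.
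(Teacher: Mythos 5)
Your proof is correct and follows essentially the same route as the paper: it invokes Lemma \ref{Lem X(K gamma) in fancy X} with $\X$ the collection of finite subsets of $X$, translates the condition to $Y'\subseteq Y$ on the rational side, and closes with the same Euler-product identity. The paper compresses this to a one-line appeal to Lemma \ref{Lem X(K gamma) in fancy X} together with Proposition \ref{X(Q,gamma) subset X}; you have simply written out the bookkeeping that the paper leaves implicit (and you avoid the typo in the paper's displayed proof of Proposition \ref{X(Q,gamma) subset X}, where the final expression is written as $\prod_{p\notin X}\alpha_{p,n}^{-1}$ rather than $\prod_{p\notin X}\alpha_{p,n}$).
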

\begin{proof}
This follows from Lemma \ref{Lem X(K gamma) in fancy X} and Proposition \ref{X(Q,gamma) subset X}.
\end{proof}

\begin{customthm}{\ref{Ok[gamma] probability}}
Given a number field $K$ and integers $n\geq 2$, $k\geq 1$, we have
$$\PPP_n\big[\OO_K[\gamma]\cap K=\OO_K[1/k]\big]
=\frac{\zeta(n+1)}{\zeta(n)}\prod_{p|k}\beta_{p,n}.$$
\end{customthm}
\begin{proof}
Let $X=\{\p\in\Spec(\OO_K)\mid k\in\p\}$. Notice that $X$ is stable and hence,
$$\PPP_n[X(K,\gamma)=X]=\frac{\zeta(n+1)}{\zeta(n)}\prod_{p|k}\beta_{p,n}.$$
We will show that $X(K,\gamma)=X$ if and only if $\OO_K[\gamma]\cap K=\OO_K[\frac{1}{k}]$. Let $\gamma_1=\frac{1}{k}$, so $\OO_K[\gamma_1]\cap K=\OO_K[\frac{1}{k}]$. Since $f_{K,\gamma_1}(x)=x-\frac{1}{k}$, we see by Theorem \ref{X cond equiv} that $X(K,\gamma_1)=X$. Theorem \ref{Cond for Ok[gamma]} tells us that $\OO_{K}[\gamma]\cap K=\OO_K[\gamma_1]\cap K$ if and only if $X(K,\gamma)=X(K,\gamma_1)$. This means that $\OO_K[\gamma]\cap K=\OO_K[\frac{1}{k}]$ if and only if $X(K,\gamma)=X$.
\end{proof}

\begin{customprop}{\ref{Prop prob X subset X(K gamma)}}
Given a number field $K$, a finite subset $X\subseteq \Spec(\OO_K)$ and $n\geq 2$.
Let $Y=\{\p\cap\Z\mid \p\in X\}$. Then
$$\PPP_n[X\subseteq X(K,\gamma)]=\prod_{p\in Y}\alpha_{p,n}\beta_{p,n}.$$
\end{customprop}
\begin{proof}
This follows from Lemma \ref{Lem X(K gamma) in fancy X} and Proposition \ref{prop p in X(Q gamma) independent}.
\end{proof}

\section{Statistics of the size of $X(K,\gamma)$}\label{Sec: size of XKgamma}

Denote $a_{K,n,t}=\PPP_n\left[ |X(K,\gamma)|=t\right]$.
Recall that $r_K(p)$ is the number of primes of $\OO_K$ above $p$.

\begin{proposition}\label{Prop prob size X(K,gamma) is t}
Given a number field $K$, integers $n\geq 2$ and $t\geq 0$, we have
$$\PPP_n[|X(K,\gamma)|=t]=\frac{\zeta(n+1)}{\zeta(n)}\sum_{\substack{Y\in\Pf(\Spec(\Z))\\ \sum_{p\in Y}r_K(p)=t}}\;\;\prod_{p\in Y}\beta_{p,n}.$$
\end{proposition}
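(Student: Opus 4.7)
The plan is to deduce this directly from Lemma \ref{Lem X(K gamma) in fancy X}, which is the main workhorse of the paper and already converts probabilities about $X(K,\gamma)$ into sums over finite subsets of $\Spec(\Z)$ weighted by the $\beta_{p,n}$'s. The only extra ingredient is a size comparison between a stable subset of $\Spec(\OO_K)$ and the corresponding subset of $\Spec(\Z)$.

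Concretely, I would set $\X=\{X\in\Pf(\Spec(\OO_K))\mid |X|=t\}$, so that
\[
\PPP_n[|X(K,\gamma)|=t]=\PPP_n[X(K,\gamma)\in\X].
\]
Then I would apply Lemma \ref{Lem X(K gamma) in fancy X} with this $\X$. The lemma constructs the auxiliary family
\[
\Y=\bigl\{Y\in\Pf(\Spec(\Z))\mid \{\p\in\Spec(\OO_K)\mid \p\cap\Z\in Y\}\in \X\bigr\},
\]
and asserts $\PPP_n[X(K,\gamma)\in\X]=\tfrac{\zeta(n+1)}{\zeta(n)}\sum_{Y\in\Y}\prod_{p\in Y}\beta_{p,n}$.

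The key observation needed to finish is that for any $Y\in\Pf(\Spec(\Z))$, the set $\{\p\in\Spec(\OO_K)\mid \p\cap\Z\in Y\}$ has size exactly $\sum_{p\in Y} r_K(p)$, since by definition each rational prime $p\in Y$ contributes $r_K(p)$ primes of $\OO_K$ lying above it. Hence membership of $Y$ in $\Y$ is equivalent to the single numerical condition $\sum_{p\in Y} r_K(p)=t$, and substituting this description of $\Y$ into the formula from Lemma \ref{Lem X(K gamma) in fancy X} yields exactly the claimed identity.

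There is no genuine obstacle here: the nontrivial content (the stability of $X(K,\gamma)$ almost surely, and the translation to a sum over subsets of $\Spec(\Z)$) is already encapsulated in Proposition \ref{Stable prob 1} and Lemma \ref{Lem X(K gamma) in fancy X}. The proof is essentially a one-line bookkeeping argument: partition $\X$ by rewriting the size constraint on $X\subseteq\Spec(\OO_K)$ as a constraint on the corresponding $Y\subseteq\Spec(\Z)$ via the splitting numbers $r_K(p)$.
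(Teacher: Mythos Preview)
Your proposal is correct and matches the paper's own proof essentially line for line: the paper also sets $\X=\{X\in\Pf(\Spec(\OO_K))\mid |X|=t\}$, applies Lemma~\ref{Lem X(K gamma) in fancy X}, and then identifies $\Y$ with $\{Y\in\Pf(\Spec(\Z))\mid \sum_{p\in Y}r_K(p)=t\}$ via the observation that the fibre over $Y$ has size $\sum_{p\in Y}r_K(p)$. There is nothing to add.
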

\begin{proof}
Let $\X=\{X\in\Pf(\Spec(\OO_K))\mid |X|=t\}$ and $$\Y=\Big\{Y\in \Pf(\Spec(\Z))\mid \{\p\in \Spec(\OO_K)\mid \p\cap\Z\in Y\}\in \X\Big\}.$$
Therefore,
$\Y=\big\{Y\in \Pf(\Spec(\Z))\mid \sum_{p\in Y}r_K(p)=t\big\}$.
The result follows from Lemma~\ref{Lem X(K gamma) in fancy X}.
\end{proof}

Define
$$f_{K,n}(z)=\frac{\zeta(n+1)}{\zeta(n)}\prod_{p\in\Spec(\Z)} (1+z^{r_K(p)}\beta_{p,n}).$$
Recall that
$\beta_{p,n}=\frac{1}{p^{n}}\frac{1-\frac{1}{p}}{1-\frac{1}{p^n}}$. We immediately see the following.
\begin{lemma}
For any prime $p$ and $n\geq 2$, we have
$\frac{1}{2p^n} <\beta_{p,n}<\frac{1}{p^n}$.
\end{lemma}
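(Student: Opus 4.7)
The plan is purely algebraic: substitute the definition of $\beta_{p,n}$ and reduce both inequalities to elementary statements about $p$ and $n$. Writing $\beta_{p,n}=\frac{1}{p^n}\cdot\frac{1-1/p}{1-1/p^n}$, it suffices to show that $\frac{1}{2}<\frac{1-1/p}{1-1/p^n}<1$ for every prime $p$ and every $n\ge 2$.

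For the upper bound, I would observe that $1-1/p<1-1/p^n$ as soon as $1/p^n<1/p$, which holds whenever $n\ge 2$; dividing by the positive quantity $1-1/p^n$ yields $\frac{1-1/p}{1-1/p^n}<1$, hence $\beta_{p,n}<1/p^n$.

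For the lower bound, I would clear denominators: $\frac{1-1/p}{1-1/p^n}>\frac{1}{2}$ is equivalent to $2(1-1/p)>1-1/p^n$, i.e.\ $1-2/p+1/p^n>0$. Multiplying through by $p^n>0$ reduces this to $p^n-2p^{n-1}+1>0$, which factors as $p^{n-1}(p-2)+1>0$. For $p=2$ this equals $1>0$, and for $p\ge 3$ the first term is already nonnegative, so the inequality holds in every case. Dividing by $p^n$ gives $\beta_{p,n}>\frac{1}{2p^n}$.

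There is no real obstacle here; the only thing to double-check is the borderline case $p=2$, where the factor $p-2$ vanishes and the strict inequality survives only because of the $+1$. Both halves together yield $\frac{1}{2p^n}<\beta_{p,n}<\frac{1}{p^n}$, completing the proof.
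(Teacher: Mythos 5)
Your proof is correct. The paper states this lemma without argument (it is introduced with "We immediately see the following"), so there is no proof to compare against; your elementary verification---reducing both bounds to $\frac{1}{2}<\frac{1-1/p}{1-1/p^n}<1$ and handling the lower bound via the factorization $p^{n-1}(p-2)+1>0$, with the borderline case $p=2$ noted---is exactly the computation the authors are implicitly asking the reader to perform.
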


\begin{lemma}
Given $n\geq 2$ and a number field $K$ with $[K:\Q]=d$, $f_{K,n}$ is an entire function of genus $\lfloor \frac{d}{n}\rfloor$.
\end{lemma}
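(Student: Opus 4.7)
The plan is to prove entirety by direct convergence, compute the convergence exponent of the zero set, and then promote this to a statement about the genus via Hadamard's factorization theorem. Set $d=[K:\Q]$. First I would verify that the product defining $f_{K,n}$ converges uniformly on compact sets: for $|z|\leq R$ the bounds $\beta_{p,n}<1/p^n$ and $r_K(p)\leq d$ give $|z^{r_K(p)}\beta_{p,n}|\leq R^d/p^n$, and $\sum_p R^d/p^n<\infty$ because $n\geq 2$. The zeros of $f_{K,n}$ are the $r_K(p)$-th roots of $-\beta_{p,n}^{-1}$ as $p$ varies over rational primes, so for each $p$ we obtain $r_K(p)$ zeros, all of modulus $\beta_{p,n}^{-1/r_K(p)}\asymp p^{n/r_K(p)}$.

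Next I would determine the exponent of convergence $\rho_1$ of this zero set. The sum $\sum_k |z_k|^{-\alpha}$ is comparable to $\sum_p r_K(p)\,p^{-n\alpha/r_K(p)}$. The uniform bound $r_K(p)\leq d$ yields convergence for every $\alpha>d/n$. For divergence at $\alpha\leq d/n$, I would apply the Chebotarev density theorem to exhibit a positive-density set of primes $p$ that split completely in the Galois closure of $K$; for each such prime $r_K(p)=d$, so the sum dominates $d\sum_{p\,\text{split}}1/p$, which diverges. Hence $\rho_1=d/n$, and the least non-negative integer $g$ with $\sum|z_k|^{-(g+1)}<\infty$ is $\lfloor d/n\rfloor$.

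To conclude, I would pin down the order $\rho$ of $f_{K,n}$. The general inequality $\rho\geq\rho_1$ yields $\rho\geq d/n$. For the reverse bound, on $|z|=r$ I would split $\log|f_{K,n}(z)|\leq\sum_p\log(1+r^{r_K(p)}\beta_{p,n})$ at the threshold $p_0=r^{d/n}$: for $p>p_0$ the tail contributes at most $\sum_{p>p_0}r^d/p^n=O(r^{d/n})$ via $\log(1+x)\leq x$ and integration, while for $p\leq p_0$ each term is $O(\log r)$ by $\log(1+x)\leq\log(2x)$ for $x\geq 1$, yielding a head contribution of $O(r^{d/n}\log r)$. Thus $\rho=d/n$, and Hadamard's factorization theorem identifies the genus as $\max(g_1,\deg P)$ with $g_1=\lfloor d/n\rfloor$ and $\deg P\leq\lfloor\rho\rfloor=\lfloor d/n\rfloor$, giving the claim. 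The main obstacle is the divergence half of the convergence exponent calculation, where Chebotarev (or the Frobenius density theorem) is invoked to produce infinitely many completely split primes; everything else amounts to routine tail estimation against $\sum p^{-n}$.
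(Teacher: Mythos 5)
Your proof is correct and follows essentially the same route as the paper: you establish entirety via the uniformly convergent product using $\beta_{p,n}<p^{-n}$ with $n\geq 2$, identify the zeros as $r_K(p)$-th roots of $-\beta_{p,n}^{-1}$ of modulus $\asymp p^{n/r_K(p)}$, and compute the convergence exponent of the zero set to be $d/n$ using $r_K(p)\leq d$ for the upper tail and the Chebotarev density theorem (completely split primes have positive density) for divergence at exponent $\mu=\lfloor d/n\rfloor$, so that the rank is $\mu$.

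The one place you go beyond what is written in the paper is the last step: the paper, after obtaining the rank, simply asserts ``has rank $\mu$ and hence it has genus $\mu$,'' whereas you supply the missing justification by estimating $\log M(r)$ directly --- splitting $\sum_p\log\big(1+r^{r_K(p)}\beta_{p,n}\big)$ at $p\asymp r^{d/n}$ to get $\log M(r)=O(r^{d/n}\log r)$, hence order exactly $d/n$ --- and then invoking Hadamard's factorization to bound the degree of the exponential factor by $\lfloor d/n\rfloor$. That detour is worth having, since for a non-canonical grouped product $\prod_p(1+\beta_{p,n}z^{r_K(p)})$ the equality of genus and rank is not automatic without some control of the order; your version makes the logical chain airtight. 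Everything else matches the paper's argument.
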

\begin{proof}
First note that since $n\geq 2$ and $\beta_{p,n}<\frac{1}{p^n}$, the sum $\sum_{p}\beta_{p,n}$ converges. Next, for $R> 1$, consider some complex number with $|z|<R$. Then we have
$$\sum_{p}|z^{r_K(p)}\beta_{p,n}|< R^d\sum_{p}\beta_{p,n}<\infty.$$
This means the product that defines $f_{K,n}$ converges uniformly on compact sets, and hence $f_{K,n}$ is an entire function.

Next, note that the zeros of $f_{K,n}$ are $-\beta_{p,n}^{-\frac{1}{r_K(p)}} e^{\frac{2\pi i a}{r_K(p)}}$ for rational primes $p$ and integers $0\leq a<r_K(p)$. We first compute the rank of $f_{K,n}$. Let $\mu=\lfloor\frac{d}{n} \rfloor$, so $(\mu+1) \frac{n}{d}>1$ and $\mu\frac{n}{d}\leq 1$. Now
$$\sum_{p}\sum_{a=0}^{r_K(p)-1}\left|\beta_{p,n}^{\frac{1}{r_K(p)}}\right| ^{\mu+1}
\leq d\sum_{p} \left|\beta_{p,n}^{\frac{1}{r_K(p)}}\right|^{\mu+1}  
\leq d\sum_{p} \left|\beta_{p,n}^{\frac{1}{d}}\right|^{\mu+1} \leq d\sum_{p} \frac{1}{p^{(\mu+1)\frac{n}{d}}},$$
Which converges since $(\mu+1)\frac{n}{d}>1$. Next, notice that
$$\sum_{p}\sum_{a=0}^{r_K(p)-1}\left|\beta_{p,n}^{\frac{1}{r_K(p)}}\right| ^{\mu}
\geq \sum_{p} \left|\beta_{p,n}^{\frac{1}{r_K(p)}}\right|^{\mu}  
\geq \sum_{p: r_K(p)=d} \left|\beta_{p,n}^{\frac{1}{d}}\right|^{\mu}
\geq \frac{1}{2^{\frac{\mu}{d}}}\sum_{p: r_K(p)=d} \frac{1}{p^{\mu\frac{n}{d}}} 
\geq \frac{1}{2^{\frac{\mu}{d}}}\sum_{p: r_K(p)=d} \frac{1}{p}.$$
By the Chebotarev density theorem, we know that the rational primes that split completely in $K$ have a positive natural density. This implies (by Abel summation) that the sum $\sum_{p: r_K(p)=d} \frac{1}{p}$ diverges. Thus, the product that defines $f_{K,n}$ has rank $\mu$ and hence it has genus $\mu=\lfloor\frac{d}{n} \rfloor$.
\end{proof}

\begin{proposition}\label{Series expansion of fKn}
We have
$$f_{K,n}(z)=\sum_{t=0}^{\infty} a_{K,n,t} z^t,\;\;\;\;\;\E_n[|X(K,\gamma)|]=f_{K,n}'(1).$$
Moreover, for any complex number $c$,
$\E_n[e^{c|X(K,\gamma)|}]=f_{K,n}(e^c)$.
\end{proposition}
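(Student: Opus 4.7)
The plan is to interpret $f_{K,n}(z)$ as the generating function for the distribution of $|X(K,\gamma)|$, using Proposition~\ref{Prop prob size X(K,gamma) is t} to read off the Taylor coefficients.

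First I would expand the Euler product. The preceding lemma already showed that $\sum_p |z^{r_K(p)}\beta_{p,n}|$ converges uniformly on compact subsets of $\mathbb{C}$, which allows us to expand finite subproducts
\[\prod_{p\in S}(1+z^{r_K(p)}\beta_{p,n})=\sum_{Y\subseteq S}\prod_{p\in Y}z^{r_K(p)}\beta_{p,n}\]
and pass to the limit $S\uparrow\Spec(\Z)$ to obtain
\[\prod_p (1+z^{r_K(p)}\beta_{p,n})=\sum_{Y\in\Pf(\Spec(\Z))}\prod_{p\in Y}z^{r_K(p)}\beta_{p,n}.\]
The same absolute convergence lets me regroup this double sum by the value $t=\sum_{p\in Y}r_K(p)$:
\[\sum_{Y\in\Pf(\Spec(\Z))}z^{\sum_{p\in Y}r_K(p)}\prod_{p\in Y}\beta_{p,n}=\sum_{t=0}^\infty z^t \sum_{\substack{Y\in\Pf(\Spec(\Z))\\ \sum_{p\in Y}r_K(p)=t}}\prod_{p\in Y}\beta_{p,n}.\]
Multiplying by $\frac{\zeta(n+1)}{\zeta(n)}$ and invoking Proposition~\ref{Prop prob size X(K,gamma) is t}, the coefficient of $z^t$ is exactly $a_{K,n,t}$. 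Since $f_{K,n}$ is entire by the preceding lemma, the Taylor identity $f_{K,n}(z)=\sum_t a_{K,n,t}z^t$ holds for all $z\in\mathbb{C}$.

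The remaining two identities follow at once from this expansion. Substituting $z=e^c$ for arbitrary $c\in\mathbb{C}$ gives $f_{K,n}(e^c)=\sum_t a_{K,n,t}e^{ct}=\E_n[e^{c|X(K,\gamma)|}]$, with absolute convergence of the series guaranteed since $\sum_t a_{K,n,t}|e^c|^t=f_{K,n}(|e^c|)<\infty$ (the coefficients being non-negative). Term-by-term differentiation of the Taylor series, valid throughout $\mathbb{C}$ for an entire function, yields $f_{K,n}'(1)=\sum_t t\,a_{K,n,t}=\E_n[|X(K,\gamma)|]$, where the last equality is the definition of expectation for the non-negative integer valued quantity $|X(K,\gamma)|$ under the distribution $\{a_{K,n,t}\}$.

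There is no real conceptual obstacle: the only substantive point is the double rearrangement of the infinite product, which is a routine application of absolute convergence already supplied by the preceding lemma. The value of the proposition is essentially as a dictionary translating subsequent statistical questions about $|X(K,\gamma)|$, such as the moment and tail estimates in Proposition~\ref{Prop exp and var of size of X(K,gamma)} and Theorem~\ref{Thm bounds on prob size X(K, gamma) is t}, into complex-analytic questions about the entire function $f_{K,n}$.
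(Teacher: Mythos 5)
Your proof is correct and follows exactly the paper's approach: expand the Euler product into a sum over finite subsets of primes, regroup by $t=\sum_{p\in Y}r_K(p)$, and identify the coefficients via Proposition~\ref{Prop prob size X(K,gamma) is t}, then differentiate and substitute $z=e^c$ for the last two identities. You supply slightly more explicit justification for the rearrangement of the infinite product (the absolute convergence argument), which the paper leaves implicit, but there is no substantive difference in method.
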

\begin{proof}
Notice that
\begin{equation*}
    \begin{split}
    f_{K,n}(z)
    &=\frac{\zeta(n+1)}{\zeta(n)}\prod_{p\in\Spec(\Z)} (1+z^{r_K(p)}\beta_{p,n})
    = \frac{\zeta(n+1)}{\zeta(n)} \sum_{Y\in \Pf(\Spec(\Z))} z^{\sum_{p\in Y}r_K(p)} \prod_{p\in Y}\beta_{p,n}\\
    &=\frac{\zeta(n+1)}{\zeta(n)}\sum_{t=0}^{\infty}z^t\sum_{\substack{Y\in\Pf(\Spec(\Z))\\ \sum_{p\in Y}r_K(p)=t}}\;\;\prod_{p\in Y}\beta_{p,n}
    = \sum_{t=0}^{\infty} z^t \PPP_n[|X(K,\gamma)|=t]\\
    &= \sum_{t=0}^{\infty} a_{K,n,t} z^t.
    \end{split}
\end{equation*}
Next, this implies that
$$f_{K,n}'(1) =\sum_{t=1}^{\infty} a_{K,n,t} t =\sum_{t=1}^{\infty}t \PPP_n[|X(K,\gamma)|=t] =\E[|X(K,\gamma)|].$$
Finally, note that
\[f_{K,n}(e^c) =\sum_{t=0}^{\infty} a_{K,n,t} e^{ct} =\sum_{t=0}^{\infty}e^{ct} \PPP_n[|X(K,\gamma)|=t] =\E\left[e^{c|X(K,\gamma)|}\right].\qedhere\]
\end{proof}

Let $m_{K,\gamma,s}$ be the $s^{th}$ moment of $|X(K,\gamma)|$.
Enumerate the rational primes as $p_1=2,p_2=3, p_3=5,\dots$.
\begin{corollary}\label{Cor: moments}
$$m_{K,\gamma,s}=\sum_{\substack{Y\subseteq\Spec(\Z)\\ Y\neq\emptyset,\; |Y|\leq s}}\prod_{p\in Y}\alpha_{p,n}\beta_{p,n}\sum_{A\subseteq Y}(-1)^{s-|A|}\left(\sum_{p\in Y}r_{K}(p)\right)^s$$
\end{corollary}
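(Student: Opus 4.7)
The plan is a direct moment computation via indicator variables, leveraging the independence-style formula of Proposition \ref{Prop prob X subset X(K gamma)}. First, I would use Proposition \ref{Stable prob 1} to restrict (up to a measure-zero set) to the event that $X(K,\gamma)$ is stable, so that it is determined by the set $X_1(K,\gamma) = \{\p\cap\Z : \p\in X(K,\gamma)\}$ of rational primes below. On this full-measure event we may write
\[
|X(K,\gamma)| = \sum_{p\in\Spec(\Z)} r_K(p)\,\mathbb{1}[p\in X_1(K,\gamma)].
\]

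Next I would raise this to the $s$-th power, expand into an $s$-fold sum over ordered tuples $(p_1,\dots,p_s)\in\Spec(\Z)^s$, and group the terms by their underlying set $Y=\{p_1,\dots,p_s\}$, which must satisfy $1\le|Y|\le s$. Because the indicators are $0/1$-valued, $\prod_i\mathbb{1}[p_i\in X_1] = \prod_{p\in Y}\mathbb{1}[p\in X_1]$. Taking expectations and using Proposition~\ref{Prop prob X subset X(K gamma)} applied to the set of all primes of $\OO_K$ sitting above $Y$ (which is what stability converts the joint indicator into), one gets
\[
\E_n\!\Big[\prod_{p\in Y}\mathbb{1}[p\in X_1(K,\gamma)]\Big] = \prod_{p\in Y}\alpha_{p,n}\beta_{p,n}.
\]
What remains is a purely combinatorial weighted-tuple count
\[
N_s(Y) \;=\; \sum_{\substack{(p_1,\dots,p_s)\in Y^s \\ \{p_1,\dots,p_s\}=Y}} r_K(p_1)\cdots r_K(p_s),
\]
to which one applies inclusion-exclusion: since $\sum_{(p_1,\dots,p_s)\in A^s} r_K(p_1)\cdots r_K(p_s) = \bigl(\sum_{p\in A} r_K(p)\bigr)^{s}$ for any $A\subseteq Y$, Möbius inversion over the subset lattice of $Y$ yields $N_s(Y)=\sum_{A\subseteq Y}(-1)^{|Y|-|A|}\bigl(\sum_{p\in A}r_K(p)\bigr)^{s}$. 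Substituting this back produces the stated formula.

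No step is really an obstacle, but the one place to be careful is the interchange of expectation and the infinite sums over primes: this is justified because $r_K(p)\le d=[K:\Q]$ forces $N_s(Y)\le(sd)^{s}$, while $\sum_{Y:|Y|\le s}\prod_{p\in Y}\beta_{p,n}$ is bounded by $\bigl(\sum_p\beta_{p,n}\bigr)^{s}/s!$, which converges for $n\ge 2$ since $\beta_{p,n}<p^{-n}$. Thus dominated convergence/Fubini apply, the grouping-by-underlying-set rearrangement is legitimate, and the proof reduces to the bookkeeping described above.
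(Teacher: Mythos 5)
Your proof is correct and takes a genuinely different route from the paper. The paper works from the \emph{moment generating function}: it uses Proposition~\ref{Series expansion of fKn} to write $\sum_s \tfrac{m_{K,\gamma,s}}{s!}x^s = f_{K,n}(e^x)$, expands the Euler product $\prod_p\bigl(1+\alpha_{p,n}\beta_{p,n}\sum_{t\ge 1}\tfrac{r_K(p)^t}{t!}x^t\bigr)$, and reads off each coefficient of $x^s/s!$ as a sum over finite tuples $(i_1<\dots<i_u)$ with compositions $t_1+\dots+t_u=s$. You instead compute $\E_n\bigl[|X(K,\gamma)|^s\bigr]$ directly by writing $|X(K,\gamma)|=\sum_p r_K(p)\,\mathbb{1}[p\in X_1(K,\gamma)]$ on the full-measure stability event, expanding the $s$-th power over ordered $s$-tuples, grouping by underlying support $Y$, and invoking Proposition~\ref{Prop prob X subset X(K gamma)} for the joint indicator expectation. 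Your route avoids the analytic detour through $f_{K,n}(e^x)$ entirely and instead requires the explicit Fubini/dominated-convergence justification you correctly supply via $\beta_{p,n}<p^{-n}$; the paper's route bundles all moments into one generating-function identity and lets the power-series bookkeeping carry the convergence. Both are sound.

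One thing your careful derivation in fact exposes: the corollary as printed in the paper contains a typo. The inner sum in the statement reads $\sum_{A\subseteq Y}(-1)^{s-|A|}\bigl(\sum_{p\in Y}r_K(p)\bigr)^s$, but this would make the whole inner sum vanish whenever $|Y|<s$ (try $s=1$, $|Y|=1$: it gives $0$ instead of $r_K(p)$). The correct closed form, which you obtain by the surjection inclusion--exclusion and which the paper itself uses when it extracts $m_{K,\gamma,1}$ and $m_{K,\gamma,2}$ in Proposition~\ref{Prop exp and var of size of X(K,gamma)}, is
\[
\sum_{A\subseteq Y}(-1)^{|Y|-|A|}\Bigl(\sum_{p\in A}r_K(p)\Bigr)^s,
\]
i.e.\ the sign exponent should be $|Y|-|A|$ and the inner sum should run over $p\in A$, not $p\in Y$. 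Your $N_s(Y)$ computation is the right one.
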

\begin{proof}
It is well known that the moment generating function is
$\sum_{s=0}^{\infty}\frac{m_{K,\gamma,s}}{s!}x^s=\E[e^{x|X(K,\gamma)|}]$.
Therefore, we see that
\begin{equation*}
    \begin{split}
    \sum_{s=0}^{\infty}\frac{m_{K,\gamma,s}}{s!}x^s
    &= f_{K,n}(e^{x})
    =\frac{\zeta(n+1)}{\zeta(n)}\prod_{p}(1+e^{xr_K(p)}\beta_{p,n})
    =\prod_{p}\alpha_{p,n} \prod_{p}\left(1+ \beta_{p,n}\sum_{t=0}^{\infty}\frac{r_{K}(p)^t}{t!}x^t \right)\\
    &= \prod_{p} \left(\alpha_{p,n}(1+\beta_{p,n}) +\alpha_{p,n}\beta_{p,n}\sum_{t=1}^{\infty}\frac{r_{K}(p)^t}{t!}x^t \right)
    =\prod_{p} \left(1 +\alpha_{p,n}\beta_{p,n}\sum_{t=1}^{\infty}\frac{r_{K}(p)^t}{t!}x^t \right)\\
    &=\sum_{s=0}^{\infty}x^s\sum_{u=1}^{s}\sum_{1\leq i_1<i_2<\dots<i_u}\prod_{j=1}^{u}\alpha_{p_{i_j},n}\beta_{p_{i_j},n}\sum_{\substack{t_1+\dots+t_u=s\\ t_j\geq 1}}\prod_{j=1}^u \frac{r_{K}(p_{i_j})^{t_j}}{t_j!}.
    \end{split} 
\end{equation*}
This implies that
\begin{equation*}
    \begin{split}
    m_{K,\gamma,s}
    &=\sum_{u=1}^{s}\sum_{1\leq i_1<i_2<\dots<i_u}\prod_{j=1}^{u}\alpha_{p_{i_j},n}\beta_{p_{i_j},n}\sum_{\substack{t_1+\dots+t_u=s\\ t_j\geq 1}} \frac{s!}{t_1!\dots t_u!} r_K(p_{i_1})^{t_1}\dots r_K(p_{i_u})^{t_u}\\
    &=\sum_{\substack{Y\subseteq\Spec(\Z)\\ Y\neq\emptyset,\; |Y|\leq s}}\prod_{p\in Y}\alpha_{p,n}\beta_{p,n}\sum_{A\subseteq Y}(-1)^{s-|A|}\left(\sum_{p\in Y}r_{K}(p)\right)^s.\qedhere
    \end{split}
\end{equation*}
\end{proof}

\begin{customprop}{\ref{Prop exp and var of size of X(K,gamma)}}
Given a number field $K$ and $n\geq 2$, we have
$$\E_n[|X(K,\gamma)|]=\sum_{p}\alpha_{p,n}\beta_{p,n}r_K(p),\;\;\;
\text{Var}_n[|X(K,\gamma)|]=\sum_{p}\alpha_{p,n}\beta_{p,n}(1-\alpha_{p,n}\beta_{p,n}) r_K(p)^2.$$
\end{customprop}
\begin{proof}
It follows from Corollary \ref{Cor: moments} that $\E_n[|X(K,\gamma)|]=m_{K,\gamma,1}=\sum_{p}\alpha_{p,n}\beta_{p,n}r_K(p)$ and
\begin{equation*}
    \begin{split}
    m_{K,\gamma,2}
    &=\sum_{p}\alpha_{p,n}\beta_{p,n}r_K(p)^2+ \sum_{p<q}\alpha_{p,n}\beta_{p,n}\alpha_{q,n}\beta_{q,n}\big((r_K(p)+r_K(q))^2-r_K(p)^2-r_K(q)^2\big)\\
    &=\sum_{p}\alpha_{p,n}\beta_{p,n}r_K(p)^2+ 2\sum_{p<q}\alpha_{p,n}\beta_{p,n}\alpha_{q,n}\beta_{q,n}r_K(p)r_K(q).
    \end{split}
\end{equation*}
Hence,
\[\text{Var}_n[|X(K,\gamma)|]=m_{K,\gamma,2}-m_{K,\gamma,1}^2 =\sum_{p}\alpha_{p,n}\beta_{p,n}(1-\alpha_{p,n}\beta_{p,n}) r_K(p)^2.\qedhere\]
\end{proof}

\begin{lemma}\label{Lemm: prod of t primes}
Given $\epsilon>0$, for sufficiently large $t$ we have
$$\prod_{i=1}^{t}p_i<e^{(1+\epsilon)t\log(t)}.$$
\end{lemma}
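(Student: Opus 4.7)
The plan is to recognize that $\prod_{i=1}^{t} p_i$ is the primorial $p_t\#$, so taking logarithms gives
\[
\log\Bigl(\prod_{i=1}^t p_i\Bigr) = \sum_{i=1}^t \log p_i = \theta(p_t),
\]
where $\theta(x) = \sum_{p\leq x}\log p$ is the Chebyshev theta function. Thus it suffices to control $\theta(p_t)$.

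I would then invoke two standard consequences of the prime number theorem: first, the asymptotic for the $t$-th prime, $p_t = (1+o(1))\,t\log t$ as $t\to\infty$; and second, the PNT in Chebyshev form, $\theta(x) = (1+o(1))\,x$ as $x\to\infty$. Chaining these,
\[
\theta(p_t) = (1+o(1))\,p_t = (1+o(1))\,t\log t.
\]
Hence for any fixed $\epsilon>0$, one has $\theta(p_t) < (1+\epsilon)\,t\log t$ once $t$ is sufficiently large, which exponentiates to the claimed bound $\prod_{i=1}^t p_i < e^{(1+\epsilon)t\log t}$.

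There is no genuine obstacle here; the entire content of the lemma is packaged in the prime number theorem, so the proof reduces to citing PNT (or its weaker Chebyshev-type upper bounds, which would already suffice since we only need an upper estimate on $\theta(p_t)$). If one preferred to avoid PNT entirely, Chebyshev's elementary upper bound $\theta(x) \leq C x$ combined with Rosser's effective bound $p_t \leq t\log t + t\log\log t$ for $t\geq 6$ would yield the same conclusion.
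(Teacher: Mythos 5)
Your proof is correct and is essentially identical to the paper's: both take logarithms to reduce to the Chebyshev function $\vartheta(p_t)$, then invoke the standard asymptotics $p_t \sim t\log t$ and $\vartheta(x) \sim x$ to conclude. The additional remark about using only Chebyshev-type upper bounds plus Rosser's bound is a nice observation but does not change the underlying argument.
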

\begin{proof}
Let $\vartheta(X)=\sum_{p\leq X}\log(p)$. Therefore, $\prod_{1\leq i\leq t}p_i=e^{\vartheta(p_t)}$. Now, it is know that asymptotically $p_t\sim t\log(t)$ and $\vartheta(X)\sim X$. The result follows.
\end{proof}

\begin{customthm}{\ref{Thm bounds on prob size X(K, gamma) is t}}
Fix a number field $K$ and $n\geq 2$. Let $\mu=\lfloor\frac{d}{n}\rfloor$. Given $\epsilon>0$, for sufficiently large $t$, we have
$$t^{-(1+\epsilon)nt} < \PPP_n[|X(K,\gamma)|=t]< t^{-\frac{1}{\mu+1+\epsilon}t}.$$
\end{customthm}
\begin{proof}
Let $\lambda$ be the order of the entire function $f_{K,n}(z)$. Then we know that $\mu\leq \lambda \leq \mu+1$. Further from \cite[Theorem 2.2.2]{Boa}, we know that
$$\lambda =\limsup_{t\to\infty} \frac{t\log(t)}{\log(|a_{K,n,t}|^{-1})}.$$
This means that for large $t$, we have $\frac{t\log(t)}{\log(|a_{K,n,t}|^{-1})}< \lambda+\epsilon$, which means that
$$a_{K,n,t}< e^{-\frac{1}{\lambda+\epsilon}t\log(t)}.$$

From Proposition \ref{Prop prob size X(K,gamma) is t}, we know that
$$a_{K,n,t}=\frac{\zeta(n+1)}{\zeta(n)}\sum_{\substack{Y\in\Pf(\Spec(\Z))\\ \sum_{p\in Y}r_K(p)=t}} \prod_{p\in Y}\beta_{p,n}.$$
Define $c(t)$ to be the largest non-negative integer for which $\sum_{i=1}^{c}r_K(p_i)\geq t$.
Therefore we have $\sum_{i=1}^{c(t)-1}r_K(p_i)< t \leq\sum_{i=1}^{c(t)}r_K(p_i)$.
This implies that $c(t)-1< t\leq dc(t)$. Next denote
$\delta(t)= \sum_{i=1}^{c(t)}r_K(p_i)-t$.
Notice that $0\leq \delta(t)<r_K(p_{c(t)})\leq d$.

Now consider the $d$ smallest primes that split completely in $K$ (there are infinitely many such primes by the Chebotarev density theorem), suppose they are $p_{i_1},\dots,p_{i_d}$. Notice that $t\geq di_d$ implies that $c(t)\geq i_d$. Therefore for $t\geq di_d$, we have $\{p_{i_1},\dots,p_{i_d}\}\subseteq \{p_1,\dots, p_{c(t)}\}.$
Now consider
$Y_1=\{p_1,\dots, p_{c(t)}\}\setminus \{p_{i_1},\dots,p_{i_{\delta(t)}}\}$.
Therefore
$$\sum_{p\in Y_1}r_K(p)=\sum_{i=1}^{c(t)}r_K(p_i)-\sum_{j=1}^{\delta(t)}r_K(p_{i_j})= \sum_{i=1}^{c(t)}r_K(p_i)-\delta(t)=t.$$
By Lemma \ref{Lemm: prod of t primes}, this implies that for large $t$,
\begin{equation*}
    \begin{split}
        a_{K,n,t}
        &\geq \frac{\zeta(n+1)}{\zeta(n)} \prod_{p\in Y_1}\beta_{p,n}
        \geq \frac{\zeta(n+1)}{\zeta(n)} \prod_{i=1}^{c(t)}\beta_{p,n}
        \geq \frac{\zeta(n+1)}{\zeta(n)} \prod_{i=1}^{t}\beta_{p,n}\\
        &\geq \frac{\zeta(n+1)}{\zeta(n)} \prod_{i=1}^{t}\frac{1}{2p_i^{n}}
        \geq \frac{\zeta(n+1)}{\zeta(n)}\frac{1}{2^t} e^{-n(1+\frac{\epsilon}{2})t\log(t)} \geq e^{-n(1+\epsilon)t\log(t)}.\qedhere
    \end{split}
\end{equation*}
\end{proof}

Recall that we have
$$\PPP_n[|X(K,\gamma)|=t]=\frac{\zeta(n+1)}{\zeta(n)}\sum_{\substack{Y\in\Pf(\Spec(\Z))\\ \sum_{p\in Y}r_K(p)=t}}\;\;\prod_{p\in Y}\beta_{p,n}.$$
Let $d=[K:\Q]$.
Given $Y\in\Pf(\Spec(\Z))$ such that $\sum_{p\in Y}r_K(p)=t$, for $1\leq i\leq d$, construct $\lambda_i=\#\{p\in Y\mid r_K(p)=i\}$. Note that $\sum_{i=1}^{d}i\lambda_i=t$. If $K$ is Galois over $\Q$, then for those $i$ that do not divide $d$, we have $\lambda_i=0$.

\begin{definition}
Given a number field $K$ with $[K:\Q]=d$ and integers $1\leq j\leq d$, $t\geq 0$, $n\geq 2$.
We denote
$$\Lambda(d,t)=\Big\{\lambda=(\lambda_1,\dots,\lambda_d)\mid \lambda_j\geq 0, \sum_{j=1}^{d}j\lambda_j=t\Big\}.$$
Given $\lambda\in \Lambda(d,t_1)$ and $\lambda'\in \Lambda(d,t_2)$, we define $\lambda+\lambda'\in \Lambda(d,t_1+t_2)$ by coordinate wise addition.
Next, denote
$$\Lambda^0(d,t)=\Big\{\lambda\in \Lambda(d,t)\mid \lambda_j\neq 0 \text{ implies } j|d\Big\}.$$
Lastly, we denote
$$b_{K,n,j,t} =\sum_{\substack{1\leq i_1<i_2<\dots <i_{t}\\ r_K(p_{i_1})=\dots=r_K(p_{i_t})=j}}\beta_{p_{i_1},n}\dots \beta_{p_{i_t},n}.$$
\end{definition}

\begin{lemma}\label{Lemma: prob XKgamm =t in terms of lambda}
Given a number field $K$ with $[K:\Q]=d$, integers $n\geq 2$ and $t\geq 0$, we have
$$\PPP_n[|X(K,\gamma)|=t]=\frac{\zeta(n+1)}{\zeta(n)} \sum_{\lambda\in \Lambda(d,t)}\prod_{\substack{1\leq j\leq d,\\ \lambda_j\neq 0}} b_{K,n,j,\lambda_j}.$$
Moreover, if $K$ is Galois over $\Q$, then we have
$$\PPP_n[|X(K,\gamma)|=t]=\frac{\zeta(n+1)}{\zeta(n)} \sum_{\lambda\in \Lambda^0(d,t)}\prod_{\substack{j|d,\\ \lambda_j\neq 0}}b_{K,n,j,\lambda_j}.$$
\end{lemma}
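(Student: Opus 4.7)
The plan is to start from the identity given by Proposition \ref{Prop prob size X(K,gamma) is t} and reorganize the outer sum over subsets $Y$ according to their ``splitting signature'' $\lambda(Y)\in\Lambda(d,t)$. Concretely, for each $Y\in\Pf(\Spec(\Z))$ with $\sum_{p\in Y}r_K(p)=t$, define $\lambda(Y)=(\lambda_1,\dots,\lambda_d)$ by $\lambda_j=\#\{p\in Y\mid r_K(p)=j\}$. The conditions $r_K(p)\in[1,d]$ and $\sum_{p\in Y}r_K(p)=t$ give $\sum_{j=1}^d j\lambda_j=t$, so $\lambda(Y)\in\Lambda(d,t)$. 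Conversely, to specify any $Y$ with $\lambda(Y)=\lambda$, one independently chooses, for each $j$ with $\lambda_j>0$, an unordered $\lambda_j$-subset of the set of rational primes $p$ with $r_K(p)=j$.

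Next, I would exploit the fact that the contribution $\prod_{p\in Y}\beta_{p,n}$ is multiplicative in $Y$ and the families $\{p:r_K(p)=j\}$ are pairwise disjoint. So for each fixed $\lambda\in\Lambda(d,t)$,
\begin{equation*}
\sum_{\substack{Y\in\Pf(\Spec(\Z))\\ \lambda(Y)=\lambda}}\prod_{p\in Y}\beta_{p,n}
=\prod_{\substack{1\leq j\leq d\\ \lambda_j\neq 0}}\;\sum_{\substack{1\leq i_1<\dots<i_{\lambda_j}\\ r_K(p_{i_k})=j\text{ for all }k}}\beta_{p_{i_1},n}\cdots\beta_{p_{i_{\lambda_j}},n}
=\prod_{\substack{1\leq j\leq d\\ \lambda_j\neq 0}} b_{K,n,j,\lambda_j}.
\end{equation*}
Summing over $\lambda\in\Lambda(d,t)$ and multiplying by $\frac{\zeta(n+1)}{\zeta(n)}$ yields the first claimed identity.

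For the Galois case, I would invoke the standard fact that if $K/\Q$ is Galois then for every rational prime $p$ the splitting number satisfies $r_K(p)\cdot e_p\cdot f_p=d$, where $e_p,f_p$ are the common ramification and residue degrees of the primes above $p$; in particular $r_K(p)\mid d$. Consequently the set $\{p:r_K(p)=j\}$ is empty whenever $j\nmid d$, making $b_{K,n,j,\lambda_j}=0$ for any such $j$ with $\lambda_j\geq 1$. Only terms with $\lambda\in\Lambda^0(d,t)$ survive, giving the second formula.

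The argument is essentially bookkeeping, so there is no real obstacle; the only point requiring minor care is the factorization step, which must use that different splitting classes $\{p:r_K(p)=j\}$ give disjoint contributions so the sum of products genuinely factors into the product of sums defining $b_{K,n,j,\lambda_j}$.
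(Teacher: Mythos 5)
Your proposal is correct and follows essentially the same route as the paper: regroup the sum in Proposition \ref{Prop prob size X(K,gamma) is t} by the splitting signature $\lambda(Y)$, factor across the pairwise-disjoint families $\{p: r_K(p)=j\}$ to identify the products $b_{K,n,j,\lambda_j}$, and restrict to $\Lambda^0(d,t)$ in the Galois case since $r_K(p)\mid d$ forces $b_{K,n,j,\cdot}=0$ for $j\nmid d$. You have simply made explicit the bookkeeping that the paper leaves implicit.
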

\begin{proof}
The first part follows from Proposition \ref{Prop prob size X(K,gamma) is t}. For the second part, note that if $K$ is Galois over $\Q$ and $j\nmid d$, then $b_{K,n,j,t}=0$.
\end{proof}

Note that we do not always have $\PPP_n[|X(K,\gamma)|=t] >\PPP_n[|X(K,\gamma)|=t+1]$, for example $\PPP_n[|X(\Q(\sqrt{-7}),\gamma)|=1] <\PPP_n[|X(\Q(\sqrt{-7}),\gamma)|=2]$. However, we will show some weaker monotonicity related results for $\PPP_n[|X(K,\gamma)|=t]$.

\begin{lemma}\label{Lemma: prob xkg=a> xkg=a+b}
Suppose we are given positive integers $d,a,b$ such that the addition map from $\Lambda(d,a)\times \Lambda(d,b)$ to $\Lambda(d,a+b)$ is surjective. Then for every number field $K$ with $[K:\Q]=d$ and integer $n\geq 2$, we have $\PPP_n[|X(K,\gamma)|=a]>\PPP_n[|X(K,\gamma)|=a+b]$.
\end{lemma}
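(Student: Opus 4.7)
The plan is to reduce the claim to comparing the sums $S(t):=\sum_{\lambda\in\Lambda(d,t)}\prod_{j}b_{K,n,j,\lambda_j}$, since Lemma~\ref{Lemma: prob XKgamm =t in terms of lambda} gives $\PPP_n[|X(K,\gamma)|=t]=(\zeta(n+1)/\zeta(n))\,S(t)$, so it suffices to prove $S(a)>S(a+b)$. I will first obtain the multiplicative bound $S(a+b)\le S(a)\cdot S(b)$ from the surjectivity hypothesis, and then show $S(b)<1$ to upgrade the bound to a strict inequality.

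The key coordinate-wise ingredient is that $b_{K,n,j,t}$ is exactly the $t$-th elementary symmetric polynomial in the nonnegative variables $\{\beta_{p,n}:r_K(p)=j\}$. For any nonnegative reals $x_1,x_2,\ldots$ and any $u,v\ge 0$, the expansion
\[e_u(x)\,e_v(x)=\binom{u+v}{u}e_{u+v}(x)+(\text{monomials with some repeated variable})\]
has only nonnegative contributions, hence $e_u(x)\,e_v(x)\ge e_{u+v}(x)$. Applied to our setting this yields $b_{K,n,j,u}\cdot b_{K,n,j,v}\ge b_{K,n,j,u+v}$ for every $j$.

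Next, using surjectivity I fix, for each $\mu\in\Lambda(d,a+b)$, a preimage $(\lambda(\mu),\lambda'(\mu))\in\Lambda(d,a)\times\Lambda(d,b)$ with $\lambda(\mu)+\lambda'(\mu)=\mu$. Applying the coordinate inequality entry by entry gives
\[\prod_j b_{K,n,j,\mu_j}\;\le\;\prod_j b_{K,n,j,\lambda(\mu)_j}\cdot \prod_j b_{K,n,j,\lambda'(\mu)_j}.\]
The section $\mu\mapsto(\lambda(\mu),\lambda'(\mu))$ is injective (one recovers $\mu$ by coordinatewise addition), so summing over $\mu$ and then enlarging the index set to the full Cartesian product $\Lambda(d,a)\times\Lambda(d,b)$ yields $S(a+b)\le S(a)\cdot S(b)$.

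The step I expect to be the main obstacle is upgrading this to a strict inequality, and I would do so by showing $S(b)<1$ whenever $b\ge 1$. Specializing Proposition~\ref{Prop prob size X(K,gamma) is t} at $t=0$ gives $\PPP_n[|X(K,\gamma)|=0]=\zeta(n+1)/\zeta(n)$, and since probabilities sum to one, $\PPP_n[|X(K,\gamma)|=b]\le 1-\zeta(n+1)/\zeta(n)$; hence $S(b)\le \zeta(n)/\zeta(n+1)-1$. A short zeta-value check shows that $\zeta(n)/\zeta(n+1)<2$ for every $n\ge 2$ (the maximum is at $n=2$, where $\zeta(2)/\zeta(3)\approx 1.37$), and consequently $S(b)<1$. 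Combining this with the product bound gives $S(a+b)\le S(a)S(b)<S(a)$, which, after reinstating the common factor $\zeta(n+1)/\zeta(n)$, is the desired strict inequality on probabilities.
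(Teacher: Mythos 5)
Your proof is correct and follows essentially the same strategy as the paper's: reduce to $S(a+b)\le S(a)\,S(b)$ via the coordinatewise bound $b_{K,n,j,u}\,b_{K,n,j,v}\ge b_{K,n,j,u+v}$ together with the surjectivity hypothesis, then close by checking $\zeta(n)/\zeta(n+1)<2$ (equivalently $\zeta(n+1)/\zeta(n)>1/2$), which is exactly what the paper does. Your elementary-symmetric-polynomial justification of the coordinatewise inequality and your explicit choice of an injective section of the addition map spell out steps the paper only states, but the argument is the same.
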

\begin{proof}
Firstly note that $b_{K,n,j,t_1}b_{K,n,j,t_2}\geq b_{K,n,j,t_1+t_2}$. Next, we apply Lemma \ref{Lemma: prob XKgamm =t in terms of lambda}.
\begin{equation*}
    \begin{split}
    \PPP_n[|X(K,\gamma)|=a+b]
    &=\frac{\zeta(n+1)}{\zeta(n)} \sum_{\lambda\in \Lambda(d,a+b)}\prod_{\substack{1\leq j\leq d,\\ \lambda_j\neq 0}} b_{K,n,j,\lambda_j}\\
    &\leq \frac{\zeta(n+1)}{\zeta(n)} \sum_{\lambda'\in \Lambda(d,a)}\sum_{\lambda''\in \Lambda(d,b)}\prod_{\substack{1\leq j\leq d,\\ \lambda'_j+\lambda''_j\neq 0}} b_{K,n,j,\lambda'_j+\lambda''_j}\\
    &\leq\frac{\zeta(n+1)}{\zeta(n)} \sum_{\lambda'\in \Lambda(d,a)}\prod_{\substack{1\leq j\leq d,\\ \lambda'_j\neq 0}} b_{K,n,j,\lambda'_j} \sum_{\lambda''\in \Lambda(d,b)}\prod_{\substack{1\leq j\leq d,\\ \lambda''_j\neq 0}} b_{K,n,j,\lambda''_j}\\
    &=\PPP_n[|X(K,\gamma)|=a] \Big(\frac{\zeta(n)}{\zeta(n+1)} \PPP_n[|X(K,\gamma)|=b]\Big).
    \end{split}
\end{equation*}    
So, we will be done if we show that $\frac{\zeta(n)}{\zeta(n+1)} \PPP_n[|X(K,\gamma)|=b]< 1$. We have $\PPP_n[|X(K,\gamma)|=0]=\frac{\zeta(n+1)}{\zeta(n)}$, so $\PPP_n[|X(K,\gamma)|=b]\leq 1-\frac{\zeta(n+1)}{\zeta(n)}$. Therefore, we only need to check $(\frac{\zeta(n+1)}{\zeta(n)})^{-1}(1-\frac{\zeta(n+1)}{\zeta(n)})<1$. This is equivalent to $\frac{\zeta(n+1)}{\zeta(n)}>\frac{1}{2}$, which is true for all $n\geq 2$.
\end{proof}

\begin{lemma}\label{Lemma: K galois decrease}
Suppose we are given positive integers $d,a,b$ such that the addition map from $\Lambda^0(d,a)\times \Lambda^0(d,b)$ to $\Lambda^0(d,a+b)$ is surjective. Then for every number field $K$ with $[K:\Q]=d$, which is Galois over $\Q$ and integer $n\geq 2$, we have $\PPP_n[|X(K,\gamma)|=a]>\PPP_n[|X(K,\gamma)|=a+b]$.
\end{lemma}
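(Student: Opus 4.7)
The plan is to mimic the proof of Lemma \ref{Lemma: prob xkg=a> xkg=a+b} verbatim, with $\Lambda^0$ replacing $\Lambda$ throughout, invoking the ``Galois over $\Q$'' case of Lemma \ref{Lemma: prob XKgamm =t in terms of lambda} at the critical step. The only thing used about $\Lambda$ in the earlier lemma was (i) surjectivity of the addition map onto $\Lambda(d,a+b)$ and (ii) the inequality $b_{K,n,j,t_1}b_{K,n,j,t_2}\geq b_{K,n,j,t_1+t_2}$ for the relevant indices $j$. Hypothesis (i) is now given by assumption for $\Lambda^0$, and (ii) is unchanged (and is applied only to indices $j \mid d$ this time).

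Concretely, I would first expand using the Galois version of Lemma \ref{Lemma: prob XKgamm =t in terms of lambda}:
\[
\PPP_n[|X(K,\gamma)|=a+b]=\frac{\zeta(n+1)}{\zeta(n)} \sum_{\lambda\in \Lambda^0(d,a+b)}\prod_{\substack{j|d,\\ \lambda_j\neq 0}} b_{K,n,j,\lambda_j}.
\]
Using surjectivity of $\Lambda^0(d,a)\times\Lambda^0(d,b)\to \Lambda^0(d,a+b)$, every $\lambda\in\Lambda^0(d,a+b)$ can be written as $\lambda'+\lambda''$, so the right-hand sum is bounded above by the sum over all pairs $(\lambda',\lambda'')\in \Lambda^0(d,a)\times \Lambda^0(d,b)$ of $\prod_{j|d,\ \lambda'_j+\lambda''_j\neq 0}b_{K,n,j,\lambda'_j+\lambda''_j}$. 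Then applying $b_{K,n,j,\lambda'_j+\lambda''_j}\leq b_{K,n,j,\lambda'_j}b_{K,n,j,\lambda''_j}$ (with the convention $b_{K,n,j,0}=1$) and separating the sums gives
\[
\PPP_n[|X(K,\gamma)|=a+b]\leq \PPP_n[|X(K,\gamma)|=a]\cdot \frac{\zeta(n)}{\zeta(n+1)}\PPP_n[|X(K,\gamma)|=b].
\]

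The last step is to verify $\frac{\zeta(n)}{\zeta(n+1)}\PPP_n[|X(K,\gamma)|=b]<1$, which is identical to the end of the proof of Lemma \ref{Lemma: prob xkg=a> xkg=a+b}: since $\PPP_n[|X(K,\gamma)|=0]=\frac{\zeta(n+1)}{\zeta(n)}$, we have $\PPP_n[|X(K,\gamma)|=b]\leq 1-\frac{\zeta(n+1)}{\zeta(n)}$, and the inequality $\frac{\zeta(n)}{\zeta(n+1)}(1-\frac{\zeta(n+1)}{\zeta(n)})<1$ reduces to $\frac{\zeta(n+1)}{\zeta(n)}>\frac12$, which holds for all $n\geq 2$.

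I do not anticipate a real obstacle: the argument is a literal transcription of the previous lemma's proof with the restricted index set $\Lambda^0$, and the only thing one must be careful about is to note that the restriction to indices $j\mid d$ (forced by $\lambda\in\Lambda^0$) is preserved under coordinate-wise addition, so the factorization of the product into the $\lambda'$ and $\lambda''$ pieces goes through without change. In particular, no additional estimate on the $b_{K,n,j,t}$ beyond the already-used supermultiplicativity $b_{K,n,j,t_1}b_{K,n,j,t_2}\geq b_{K,n,j,t_1+t_2}$ is needed.
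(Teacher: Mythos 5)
Your proposal is correct and matches the paper exactly: the authors simply state that the proof is obtained by following a procedure identical to that of Lemma \ref{Lemma: prob xkg=a> xkg=a+b}, which is precisely the verbatim transcription with $\Lambda^0$ in place of $\Lambda$ that you carry out. Nothing further is needed.
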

\begin{proof}
The proof is obtained by following a procedure identical to the proof of Lemma~\ref{Lemma: prob xkg=a> xkg=a+b}.
\end{proof}

\begin{proposition}\label{Prop: aKt are decreasing}
Suppose we are given a number field $K$ with $[K:\Q]=d$, $n\geq 2$. Let $d_1=lcm(1,2,\dots,d)$. For $t\geq (d-1)d_1-\frac{d(d+1)}{2}+1$, we have 
$$\PPP_n[|X(K,\gamma)|=t] >\PPP_n[|X(K,\gamma)|=t+d_1].$$
Let $\tau(d)$ be the number of divisors of $d$ and $\sigma(d)$ be the sum of the divisors of $d$. If $K$ is Galois over $\Q$, then for $t\geq d(\tau(d)-1)-\sigma(d)+1$, we have 
$$\PPP_n[|X(K,\gamma)|=t] >\PPP_n[|X(K,\gamma)|=t+d].$$
\end{proposition}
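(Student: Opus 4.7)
The plan is to deduce both parts from Lemma~\ref{Lemma: prob xkg=a> xkg=a+b} and Lemma~\ref{Lemma: K galois decrease} respectively. Those lemmas reduce the claim to verifying combinatorial surjectivity: for the first part, I need the addition map $\Lambda(d,t)\times\Lambda(d,d_1)\to\Lambda(d,t+d_1)$ to be surjective whenever $t\geq (d-1)d_1-\frac{d(d+1)}{2}+1$; for the Galois part, I need the map $\Lambda^0(d,t)\times\Lambda^0(d,d)\to\Lambda^0(d,t+d)$ to be surjective whenever $t\geq d(\tau(d)-1)-\sigma(d)+1$.

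The key observation driving the first reduction is that, by construction of $d_1=\text{lcm}(1,2,\dots,d)$, the quotient $d_1/j$ is an integer for every $j\in\{1,\dots,d\}$. Consequently, for each such $j$ the tuple $e^{(j)}$ defined by $e^{(j)}_j=d_1/j$ and $e^{(j)}_i=0$ for $i\neq j$ lies in $\Lambda(d,d_1)$. So, given any $\mu\in\Lambda(d,t+d_1)$, it suffices to exhibit some index $j$ with $\mu_j\geq d_1/j$: then $\lambda:=\mu-e^{(j)}\in\Lambda(d,t)$ and $\mu=\lambda+e^{(j)}$ is the required decomposition.

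To produce such a $j$, I will argue by contradiction. Suppose $\mu_j\leq d_1/j-1$ for every $j\in\{1,\dots,d\}$. Then
$$t+d_1=\sum_{j=1}^d j\mu_j\leq \sum_{j=1}^d(d_1-j)=d\cdot d_1-\frac{d(d+1)}{2},$$
so $t\leq(d-1)d_1-\frac{d(d+1)}{2}$, contradicting the hypothesis. Hence the surjectivity holds and Lemma~\ref{Lemma: prob xkg=a> xkg=a+b} delivers the first inequality.

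The Galois case follows by the same pigeonhole argument restricted to divisors of $d$: for each $j\mid d$, the tuple $e^{(j)}$ with $e^{(j)}_j=d/j$ (and zeros elsewhere) lies in $\Lambda^0(d,d)$, and if $\mu_j\leq d/j-1$ for every $j\mid d$ then $t+d=\sum_{j\mid d}j\mu_j\leq\sum_{j\mid d}(d-j)=d\tau(d)-\sigma(d)$, contradicting the hypothesis on $t$; an application of Lemma~\ref{Lemma: K galois decrease} then concludes. The proof is almost entirely routine; the only point requiring care is the choice of the witnesses $e^{(j)}$, which is exactly what forces the divisibility conditions $j\mid d_1$ and $j\mid d$ into the argument.
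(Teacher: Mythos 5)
Your proof is correct and matches the paper's approach essentially verbatim: both reduce to the surjectivity of the addition maps via Lemma~\ref{Lemma: prob xkg=a> xkg=a+b} and Lemma~\ref{Lemma: K galois decrease}, and then use the same pigeonhole argument (some coordinate $\rho_j$ must be at least $d_1/j$, resp.\ $d/j$, since otherwise $\sum_j j\rho_j$ would be too small) to peel off a tuple supported at a single index $j$ with $j\mid d_1$, resp.\ $j\mid d$.
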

\begin{proof}
We will be done in the first part if we show that the map $\lambda+\lambda'$ from $\Lambda(d,t)\times \Lambda(d,d_1)$ to $\Lambda(d,t+d_1)$ is surjective.
Consider $\rho\in\Lambda(d,t+d_1)$. Note that
$$\sum_{j=1}^{d}j\rho_j=t+d_1\geq dd_1-\frac{d(d+1)}{2}+1>\sum_{j=1}^{d}(d_1-j).$$
This means there must be some $j$ for which $j\rho_j>d_1-j$, that is, $\rho_j>\frac{d_1}{j}-1$. Since $j\mid d_1$, we conclude that $\rho_j\geq \frac{d_1}{j}$. Consider $\lambda=(\rho_1,\dots,\rho_{j-1},\rho_j-\frac{d_1}{j},\rho_{j+1},\dots,\rho_d)$ and $\lambda'=(0,\dots,0,\frac{d_1}{j},0,\dots,0)$. It is clear that $\lambda\in \Lambda(d,t)$, $\lambda'\in \Lambda(d,d_1)$ and $\lambda+\lambda'=\rho$. Hence the map from $\Lambda(d,t)\times \Lambda(d,d_1)$ to $\Lambda(d,t+d_1)$ is surjective. We see that $\PPP_n[|X(K,\gamma)|=t] >\PPP_n[|X(K,\gamma)|=t+d_1].$

Next, suppose that $K$ is Galois over $\Q$ and $t\geq d(\tau(d)-1)-\sigma(d)+1$. We need to show that the addition map from $\Lambda^0(d,t)\times \Lambda^0(d,d)$ to $\Lambda^0(d,t+d)$ is surjective.
Consider $\rho\in\Lambda^0(d,t+d)$. Note that
$$\sum_{j|d} j\rho_j=t+d \geq d\tau(d)-\sigma(d)+1>\sum_{j|d}(d-j).$$
This means there must be some $j\mid d$ for which $j\rho_j>d-j$, that is, $\rho_j>\frac{d}{j}-1$. Since $j\mid d$, we conclude that $\rho_j\geq \frac{d}{j}$. Consider $\lambda\in\Lambda(d,t)$ given by $\lambda=(\rho_1,\dots,\rho_{j-1},\rho_j-\frac{d}{j},\rho_{j+1},\dots,\rho_d)$ and $\lambda'\in \Lambda(d,d)$ given by $\lambda'=(0,\dots,0,\frac{d}{j},0,\dots,0)$. It is clear that $\lambda+\lambda'=\rho$. Hence the map from $\Lambda^0(d,t)\times \Lambda^0(d,d)$ to $\Lambda^0(d,t+d)$ is surjective. We see that $\PPP_n[|X(K,\gamma)|=t] >\PPP_n[|X(K,\gamma)|=t+d].$
\end{proof}

\begin{corollary}
Given $n\geq 2$ and $t\geq 0$, we have
$$\PPP_n[|X(\Q,\gamma)|=t] >\PPP_n[|X(\Q,\gamma)|=t+1].$$
\end{corollary}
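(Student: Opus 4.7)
The plan is to recognize that this corollary is a direct specialization of Proposition~\ref{Prop: aKt are decreasing} to the base field $K=\Q$. Setting $K=\Q$ gives $d=[K:\Q]=1$, and $\Q$ is trivially Galois over $\Q$, so either version of the proposition is available.

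I would apply the first assertion of Proposition~\ref{Prop: aKt are decreasing} with $d=1$. Then $d_1=\mathrm{lcm}(1)=1$, and the threshold becomes
\[
t\geq (d-1)d_1-\tfrac{d(d+1)}{2}+1 = 0\cdot 1 - 1 + 1 = 0.
\]
Thus for every $t\geq 0$ the proposition yields $\PPP_n[|X(\Q,\gamma)|=t]>\PPP_n[|X(\Q,\gamma)|=t+1]$, which is exactly the claim. (Alternatively, one could invoke the Galois version: with $d=1$, $\tau(1)=\sigma(1)=1$, so the threshold $t\geq d(\tau(d)-1)-\sigma(d)+1=0$ is again satisfied for all $t\geq 0$, and the step size is $d=1$.)

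There is essentially no obstacle here: the only thing to verify is that the addition map $\Lambda(1,t)\times\Lambda(1,1)\to\Lambda(1,t+1)$ is surjective, which is already built into the proposition via the numerical bound on $t$. One could also observe directly that $\Lambda(1,t)$ consists of the single tuple $(t)$ and the map $(t)+(1)=(t+1)$ is trivially surjective, but this is subsumed by Proposition~\ref{Prop: aKt are decreasing}. Hence the proof is simply a one-line application.

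\begin{proof}
Apply Proposition~\ref{Prop: aKt are decreasing} with $K=\Q$, so that $d=1$ and $d_1=\mathrm{lcm}(1)=1$. The hypothesis $t\geq (d-1)d_1-\tfrac{d(d+1)}{2}+1=0$ holds for every $t\geq 0$, giving the desired inequality.
\end{proof}
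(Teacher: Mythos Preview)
Your proof is correct and matches the paper's intent: the corollary is stated immediately after Proposition~\ref{Prop: aKt are decreasing} without further argument, so it is meant to follow by specializing to $K=\Q$, $d=d_1=1$, exactly as you do.
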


\begin{proposition}
Suppose $K$ is a number field which is Galois over $\Q$ and for which with $[K:\Q]=p^m$ is a prime power. Suppose we are given $n\geq 2$ and $t\geq 0$.  Then we have
$$\PPP_n[|X(K,\gamma)|=t] >\PPP_n[|X(K,\gamma)|=t+p^m].$$
Further consider $0\leq j\leq m-1$ and $t\geq 0$. Suppose $t$ satisfies the property that there is no $0\leq a\leq p^j-1$ for which $t\equiv (p-1)p^{j}+a\pmod{p^{j+1}}$. Then we have
$$\PPP_n[|X(K,\gamma)|=t] >\PPP_n[|X(K,\gamma)|=t+p^j].$$
\end{proposition}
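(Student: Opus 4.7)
The plan is to reduce both inequalities, via Lemma~\ref{Lemma: K galois decrease}, to showing surjectivity of the addition map $\Lambda^0(p^m, t) \times \Lambda^0(p^m, p^k) \to \Lambda^0(p^m, t + p^k)$ where $k = m$ for the first statement and $k = j$ for the second. Since $K$ is Galois of degree $p^m$ over $\Q$, the positive divisors of $d = p^m$ are exactly $p^0, p^1, \ldots, p^m$, so an element of $\Lambda^0(p^m, s)$ is a tuple $(\rho_1, \rho_p, \rho_{p^2}, \ldots, \rho_{p^m})$ of non-negative integers with $\sum_{i=0}^{m} p^i \rho_{p^i} = s$.

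The main combinatorial ingredient I would isolate is the following \textbf{Key Lemma}: for every integer $k \geq 0$ and every tuple of non-negative integers $\rho_1, \rho_p, \ldots, \rho_{p^k}$ with $\sum_{i=0}^{k} p^i \rho_{p^i} \geq p^k$, there exist non-negative integers $\lambda_1, \ldots, \lambda_{p^k}$ with $\lambda_{p^i} \leq \rho_{p^i}$ for every $i$ and $\sum_{i=0}^{k} p^i \lambda_{p^i} = p^k$. I would prove this by induction on $k$. The base case $k=0$ is immediate since $\rho_1 \geq 1$ lets one take $\lambda_1 = 1$. For the inductive step, either $\rho_{p^k} \geq 1$, in which case $\lambda_{p^k} = 1$ and all other $\lambda_{p^i} = 0$ works, or $\rho_{p^k} = 0$, in which case the lower sum $\sum_{i=0}^{k-1} p^i \rho_{p^i}$ is at least $p \cdot p^{k-1}$. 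In the latter case I would apply the inductive hypothesis $p$ times in succession, extracting disjoint chunks of total mass $p^{k-1}$: after $i < p$ extractions the remaining lower sum is still at least $(p - i) p^{k-1} \geq p^{k-1}$, so the next application is legal, and the union of the $p$ chunks has total mass $p^k$.

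For the first part of the proposition, any $\rho \in \Lambda^0(p^m, t + p^m)$ has total sum $t + p^m \geq p^m$, so the Key Lemma with $k = m$ produces $\lambda' \leq \rho$ in $\Lambda^0(p^m, p^m)$; setting $\lambda = \rho - \lambda' \in \Lambda^0(p^m, t)$ yields the required decomposition, and Lemma~\ref{Lemma: K galois decrease} closes the argument. For the second part, I would write the total sum of $\rho \in \Lambda^0(p^m, t + p^j)$ as $U + V$ with $U = \sum_{i=0}^{j} p^i \rho_{p^i}$ and $V = \sum_{i=j+1}^{m} p^i \rho_{p^i}$. Since $V$ is divisible by $p^{j+1}$, we have $U \equiv t + p^j \pmod{p^{j+1}}$. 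The hypothesis rules out $t \bmod p^{j+1} \in \{(p-1)p^j, \ldots, p^{j+1}-1\}$, so $t \bmod p^{j+1} \leq (p-1)p^j - 1$, and adding $p^j$ forces $(t + p^j) \bmod p^{j+1} \geq p^j$. Hence $U \bmod p^{j+1} \geq p^j$, which combined with $U \geq U \bmod p^{j+1}$ gives $U \geq p^j$. The Key Lemma applied with $k = j$ to $(\rho_1, \ldots, \rho_{p^j})$ then yields $\lambda' \in \Lambda^0(p^m, p^j)$, supported on $p^0, \ldots, p^j$, with $\lambda' \leq \rho$, and Lemma~\ref{Lemma: K galois decrease} again concludes.

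The main obstacle is the iterative extraction inside the Key Lemma; once that is established, the second assertion reduces to an arithmetic check on the residue of $t$ modulo $p^{j+1}$, and the seemingly mysterious congruence condition on $t$ is seen to be exactly what is needed to guarantee that the lower sum $U$ is large enough for the Key Lemma to be applicable to the first $j+1$ coordinates.
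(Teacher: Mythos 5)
Your proposal is correct and, at the macro level, follows the same route as the paper: reduce both inequalities to surjectivity of the addition map $\Lambda^0(p^m,t)\times\Lambda^0(p^m,p^k)\to\Lambda^0(p^m,t+p^k)$ via Lemma~\ref{Lemma: K galois decrease}, and for the second inequality use the congruence $\sum_{i=0}^{j}p^i\rho_{p^i}\equiv t+p^j\pmod{p^{j+1}}$ together with the hypothesis on $t$ to force $\sum_{i=0}^{j}p^i\rho_{p^i}\geq p^j$ — this part is essentially verbatim the paper's argument. Where you diverge is in how the surjectivity is established once the relevant partial sum is known to be at least $p^k$: the paper picks $j_0=\min\{j:\sum_{i\geq j}p^i\rho_{p^i}\geq p^k\}$, observes that by minimality the overshoot $\sum_{i\geq j_0}p^i\rho_{p^i}-p^k$ is nonnegative, less than $p^{j_0}\rho_{p^{j_0}}$, and divisible by $p^{j_0}$, and peels off the required sub-tuple in a single explicit step, whereas you isolate a Key Lemma and prove it by a top-down induction that, when the top coordinate is empty, splits the target $p^k$ into $p$ sub-targets of size $p^{k-1}$ and extracts them sequentially. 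Both arguments exploit exactly the same structural fact — that the powers $p^0\mid p^1\mid\cdots\mid p^m$ form a divisibility chain, so a deficit at level $j$ can always be made up by a $p^{j}$-multiple from below — and there is no real difference in depth or generality; yours is a bit more modular in that the Key Lemma is a reusable stand-alone statement, while the paper's construction is one-shot and explicit. Your proof is complete and correct as written; the one implicit point worth making explicit is that after each extraction inside the induction the remaining tuple is still coordinatewise nonnegative, which follows immediately from the constraint $\lambda\leq\rho$ in the inductive hypothesis.
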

\begin{proof}
For the first part, we want to          show that the addition map from $\Lambda^0(p^m,t)\times \Lambda^0(p^m,p^m)\to\Lambda^0(p^m,t+p^m)$ is surjective. Consider $\rho\in \Lambda^0(p^m,t+p^m)$, we know that $\sum_{i=0}^{m}p^i\rho_{p^i}=t+p^m$. Let $j_0=\min\{0\leq j\leq m\mid \sum_{i=j}^{m}p^i\rho_{p^i}\geq p^m\}$. This means that
$0\leq \sum_{i=j_0}^{m}p^i\rho_{p^i}-p^m< p^{j_0}\rho_{p^{j_0}}$. Let $a=\sum_{i=j_0}^{m}p^{i-j_0}\rho_{p^i}-p^{m-j_0}$, so $a\in \Z$ and $0\leq a<\rho_{p^{j_0}}$. Define $\lambda\in \Lambda^{0}(p^m,t)$ and $\lambda'\in\Lambda^0(p^m,p^m)$ as follows
\begin{align*}
\lambda_{p^i}&=
\begin{cases}
    0 &\text{if }i>j_0\\
    \rho_{p^{j_0}}-a &\text{if } i=j_0\\
    \rho_{p^i} &\text{if } i<j_0,
\end{cases}
&
\lambda'_{p^i}&=
\begin{cases}
    \rho_{p^i} &\text{if }i>j_0\\
    a &\text{if } i=j_0\\
    0 &\text{if } i<j_0.
\end{cases}
\end{align*}
It is clear that $\lambda+\lambda'=\rho$. Thus the addition map from $\Lambda^0(p^m,t)\times \Lambda^0(p^m,p^m)\to\Lambda^0(p^m,t+p^m)$ is surjective.

For the second part suppose we are given $0\leq j\leq m-1$ and $t\geq 0$, such that there is no $0\leq a\leq p^j-1$ for which $t+p^j\equiv a\pmod{p^{j+1}}$. We want to show that the addition map from $\Lambda^0(p^m,t)\times \Lambda^0(p^m,p^j)\to\Lambda^0(p^m,t+p^j)$ is surjective.
Consider $\rho\in \Lambda^0(p^m,t+p^j)$, we know that $\sum_{i=0}^{m}p^i\rho_{p^i}=t+p^j$. We see that $t+p^j\equiv \sum_{i=0}^{j}p^i\rho_{p^i} \pmod{p^{j+1}}$, so $\sum_{i=0}^{j}p^i\rho_{p^i}\geq p^j$. 
Let $k_0=\min\{0\leq k\leq j\mid \sum_{i=k}^{j}p^i\rho_{p^i}\geq p^j\}$. This means that
$0\leq \sum_{i=k_0}^{j}p^i\rho_{p^i}-p^j< p^{k_0}\rho_{p^{k_0}}$. Let $b=\sum_{i=k_0}^{j}p^{i-k_0}\rho_{p^i}-p^{j-k_0}$, so $b\in \Z$ and $0\leq b<\rho_{p^{k_0}}$. Define $\lambda\in \Lambda^{0}(p^m,t)$ and $\lambda'\in\Lambda^0(p^m,p^j)$ as follows
\begin{align*}
\lambda_{p^i}&=
\begin{cases}
    \rho_{p^i} &\text{if }j<i\\
    0 &\text{if }k_0<i\leq j\\
    \rho_{p^{j_0}}-b &\text{if } i=k_0\\
    \rho_{p^i} &\text{if } i<k_0,
\end{cases}
&
\lambda'_{p^i}&=
\begin{cases}
    0 &\text{if }j<i\\
    \rho_{p^i} &\text{if }k_0<i\leq j\\
    b &\text{if } i=k_0\\
    0 &\text{if } i<k_0.
\end{cases}
\end{align*}
It is clear that $\lambda+\lambda'=\rho$. Thus the addition map from $\Lambda^0(p^m,t)\times \Lambda^0(p^m,p^m)\to\Lambda^0(p^m,t+p^m)$ is surjective.
\end{proof}

\section{Subgroup of Class group generated by $X(K,\gamma)$.}\label{Sec: group gen by XKgamma}

In this section we will study the group $\big\langle [\p],\p\in X(K,\gamma)\big\rangle$, particularly when $K$ is a quadratic imaginary field. We will prove Theorem \ref{Thm: <X K gamma>=G}.

\begin{lemma}
Suppose we are given a number field $K$ and integers $t\geq 1$ and $n\geq 2$. Let $Y'$ be the set of rational primes $p$ for which at least one prime $\p$ of $\OO_K$ above $p$ has $\p^t$ not principal. Then we have
$$\PPP_n\big[ \big\langle [\p],\p\in X(K,\gamma)\big\rangle \text{ is }t\text{-torsion}\big]
=\prod_{p\in Y'} \alpha_{p,n}.$$
\end{lemma}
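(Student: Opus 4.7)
The plan is to reinterpret the $t$-torsion condition as an inclusion of $X(K,\gamma)$ into a distinguished subset of $\Spec(\OO_K)$, and then invoke Proposition \ref{Prop prob X(K gamma) subset X} directly.

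First I would observe that a subgroup of an abelian group generated by a set $S$ is $t$-torsion if and only if each generator is $t$-torsion. Applied to $\Cl(\OO_K)$, the group $\langle [\p], \p \in X(K,\gamma)\rangle$ is $t$-torsion if and only if $\p^t$ is principal for every $\p \in X(K,\gamma)$. Setting
$$X := \{\p \in \Spec(\OO_K) \mid \p^t \text{ is principal}\},$$
the event in question is precisely $X(K,\gamma) \subseteq X$.

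Next I would apply Proposition \ref{Prop prob X(K gamma) subset X} to this subset $X$. With $Y = \{p \in \Spec(\Z) \mid \text{every prime of } \OO_K \text{ above } p \text{ lies in } X\}$, the proposition gives
$$\PPP_n[X(K,\gamma) \subseteq X] = \prod_{p \notin Y} \alpha_{p,n}.$$
By construction, a rational prime $p$ fails to lie in $Y$ exactly when some prime $\p$ of $\OO_K$ above $p$ is not in $X$, that is, when some such $\p$ has $\p^t$ not principal. This is the defining condition for $p \in Y'$, so the complement of $Y$ in $\Spec(\Z)$ coincides with $Y'$, and substituting yields the stated formula.

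No substantive obstacle is expected, as the argument is essentially a bookkeeping reinterpretation followed by a direct application of Proposition \ref{Prop prob X(K gamma) subset X}. The only point worth flagging is that the set $X$ may be infinite (indeed, typically is, since the condition $\p^t$ principal is governed by the finite group $\Cl(\OO_K)$ and hence cuts out infinitely many primes); however, Proposition \ref{Prop prob X(K gamma) subset X} is stated for arbitrary subsets $X \subseteq \Spec(\OO_K)$, and the infinite product $\prod_{p \in Y'} \alpha_{p,n}$ converges absolutely since $1 - \alpha_{p,n} = O(1/p^n)$ with $n \geq 2$, so there is no analytic subtlety.
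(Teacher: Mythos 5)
Your argument is correct and follows essentially the same route as the paper: reformulate the $t$-torsion condition as the event $X(K,\gamma)\subseteq X$ with $X=\{\p\in\Spec(\OO_K)\mid\p^t\text{ principal}\}$, then apply Proposition \ref{Prop prob X(K gamma) subset X} and identify the complement of the resulting set $Y$ with $Y'$. The remark about $X$ being infinite and Proposition \ref{Prop prob X(K gamma) subset X} applying to arbitrary subsets is a useful clarification but not a point of divergence.
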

\begin{proof}
Firstly $\big\langle [\p],\p\in X(K,\gamma)\big\rangle$ is $t$-torsion if and only if for each prime $\p\in X(K,\gamma)$, $\p^t$ is principal. Let $X$ be the collection of primes $\p$ of $\OO_K$ for which $\p^t$ is principal. Let $Y$ be the collection of primes $p$ of $\Z$, such that all primes of $\OO_K$ above $p$ are in $X$. Then by Proposition \ref{Class group OK[gamma]cap K}, we know that
\[\PPP_n\big[ \big\langle [\p],\p\in X(K,\gamma)\big\rangle \text{ is }t\text{-torsion}\big] =\PPP_n[X(K,\gamma)\subseteq X] =\prod_{p\notin Y}\alpha_{p,n}.\qedhere\]
\end{proof}



\begin{lemma}
For $n\geq 2$ and $N\geq 2$, we have
$$1-\frac{1}{(n-1)(N-1)^{n-1}}<\prod_{p\geq N}\alpha_{p,n}<1 <\prod_{p\geq N}\alpha_{p,n}^{-1}<1+\frac{1}{(n-1)(N-1)^{n-1}}.$$
\end{lemma}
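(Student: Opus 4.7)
The four inequalities should be proved as two separate two-sided bounds, linked by the identity $\alpha_{p,n}(1 + \beta_{p,n}) = 1$, which gives $\alpha_{p,n}^{-1} = 1 + \beta_{p,n}$. The middle strict inequalities $\prod \alpha_{p,n} < 1 < \prod \alpha_{p,n}^{-1}$ are immediate from the per-factor fact $\alpha_{p,n} < 1$, valid for $n \geq 2$ because $\alpha_{p,n} = (1 - 1/p^n)/(1 - 1/p^{n+1})$ has a denominator strictly exceeding its numerator.

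For the upper bound on $\prod_{p\geq N}\alpha_{p,n}^{-1}$, I would first establish the per-factor comparison
$$1 + \beta_{p,n} \;=\; \frac{1-1/p^{n+1}}{1-1/p^n} \;<\; \frac{1}{1-1/p^n},$$
which holds because $1 - 1/p^{n+1} < 1$. Taking the product over $p \geq N$ and expanding the resulting Euler product by unique factorisation gives
$$\prod_{p \geq N}\alpha_{p,n}^{-1} \;<\; \prod_{p \geq N}\frac{1}{1-1/p^n} \;=\; \sum_{k}\frac{1}{k^n},$$
where $k$ ranges over positive integers whose prime factors are all $\geq N$. Isolating the term $k = 1$ and bounding the rest by the full sum over integers $\geq N$ yields an upper bound $1 + \sum_{k \geq N} 1/k^n$. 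The standard integral comparison $\sum_{k \geq N} 1/k^n < \int_{N-1}^{\infty} x^{-n}\,dx = 1/((n-1)(N-1)^{n-1})$, strict because $x^{-n}$ is strictly decreasing, finishes this side.

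For the lower bound on $\prod_{p \geq N}\alpha_{p,n}$, the dual per-factor comparison is $\alpha_{p,n} > 1 - 1/p^n$: since we are dividing $1 - 1/p^n$ by $1 - 1/p^{n+1} < 1$, the quotient increases. Hence
$$\prod_{p \geq N}\alpha_{p,n} \;>\; \prod_{p \geq N}\Bigl(1 - \frac{1}{p^n}\Bigr) \;\geq\; 1 - \sum_{p \geq N}\frac{1}{p^n} \;>\; 1 - \frac{1}{(n-1)(N-1)^{n-1}},$$
where the middle step is the elementary Weierstrass-type inequality $\prod(1 - x_i) \geq 1 - \sum x_i$ (valid by induction on finite truncations and passed to the limit, since $\sum 1/p^n$ converges for $n \geq 2$), and the last step reuses the integral bound above.

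No serious obstacle arises; the only delicate point is that the strict per-factor inequalities must be transferred to the infinite product. This is handled by working first with the truncated product over $N \leq p \leq M$, applying the strict inequality there, and then letting $M \to \infty$, after which the slack already built into the final integral comparison guarantees that the overall inequality remains strict.
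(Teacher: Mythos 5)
Your proof is correct, and the upper-bound side runs essentially parallel to the paper's: you use the per-factor bound $\alpha_{p,n}^{-1}<\frac{1}{1-1/p^n}$ and expand as an Euler product, while the paper uses the slightly sharper per-factor bound $\alpha_{p,n}^{-1}=1+\beta_{p,n}<1+1/p^n$ and expands $\prod_{p\geq N}(1+1/p^n)$ directly; both land on $1+\sum_{m\geq N}1/m^n$ and the same integral tail estimate. The real divergence is the lower bound. You prove it independently from the per-factor estimate $\alpha_{p,n}>1-1/p^n$ together with the Weierstrass inequality $\prod(1-x_i)\geq 1-\sum x_i$, which is perfectly sound but requires an extra tool and an extra limiting argument. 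The paper instead recycles the upper bound it just proved: since $\prod_{p\geq N}\alpha_{p,n}$ is exactly the reciprocal of $\prod_{p\geq N}\alpha_{p,n}^{-1}$, the bound $\prod\alpha_{p,n}^{-1}<1+\varepsilon$ (with $\varepsilon=\frac{1}{(n-1)(N-1)^{n-1}}$) immediately gives $\prod\alpha_{p,n}>\frac{1}{1+\varepsilon}>1-\varepsilon$. The paper's route is shorter and shows the two outer inequalities are really one fact seen from two sides; yours is the more "from first principles" version but costs you the Weierstrass lemma and its passage to the infinite product.
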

\begin{proof}
Firstly, for all primes $p$ we have $\alpha_{p,n}<1$. Therefore, $\prod_{p\geq N}\alpha_{p,n}<1$.
Note that $\beta_{p,n}=\frac{1}{p^n}\frac{1-\frac{1}{p}}{1-\frac{1}{p^{n+1}}}<\frac{1}{p^n}$. This implies that
\begin{equation*}
    \begin{split}
    \alpha_{p,n}^{-1}
    &=\prod_{p\geq N}(1+\beta_{p,n})
    <\prod_{p\geq N}(1+\frac{1}{p^n})< 1+\sum_{m=N}^{\infty}\frac{1}{m^n}\\ &<1+\int_{N}^{\infty} \frac{1}{(x-1)^n}dx =1+\frac{1}{(n-1)(N-1)^{n-1}}.
    \end{split}
\end{equation*}
From this we see that
\[\prod_{p\geq N}\alpha_{p,n}>\frac{1}{1+\frac{1}{(n-1)(N-1)^{n-1}}}>1-\frac{1}{(n-1)(N-1)^{n-1}}.\qedhere\]
\end{proof}

Consider the character $\chi_2$,
$$\chi_2(a)=\begin{cases} 0 &\text{ if } 2\mid a\\ 1 &\text{ if } a\equiv \pm 1\pmod{8}\\ -1 &\text{ if } a\equiv \pm 5\pmod{8}.\end{cases}$$
For odd primes $p$, consider characters $\chi_p(a)=\legendre{a}{p}$.
For each $n\geq 2$, define functions
\begin{align*}
f_n(a)&=\prod_{p: \chi_p(a)\in\{0,1\}}\alpha_{p,n},
&
g_n(a)&=\prod_{p: \chi_p(a)\in\{1\}}\alpha_{p,n}.
\end{align*}

\begin{proposition}
For each quadratic imaginary field $K$, denote its discriminant as $d_K$.
Fix integers $n\geq 2$ and odd integer $t\geq 1$. Then we have
$$\lim_{d_K\to-\infty}\frac{\PPP_n\big[ \big\langle [\p],\p\in X(K,\gamma)\big\rangle \text{ is }t\text{-torsion}\big]}{f_n(d_K)}=1.$$
\end{proposition}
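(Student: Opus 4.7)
The plan is to combine the previous lemma with a short norm estimate in imaginary quadratic orders. By that lemma, the probability in question equals $\prod_{p \in Y'} \alpha_{p,n}$, where $Y'$ is the set of rational primes $p$ such that some prime of $\OO_K$ above $p$ has non-principal $t$-th power. Writing $A = \{p : \chi_p(d_K) \in \{0,1\}\}$, one has $f_n(d_K) = \prod_{p \in A} \alpha_{p,n}$. Using the standard criterion for ramification/splitting in $K$ together with the definition of $\chi_2$ given in the excerpt, $A$ is precisely the set of rational primes that ramify or split in $K$. Since an inert prime $p$ gives $\p = (p)$ principal (so $\p^t$ is principal), we have $Y' \subseteq A$, and therefore
\[
\frac{\PPP_n\big[\langle [\p],\p\in X(K,\gamma)\rangle \text{ is }t\text{-torsion}\big]}{f_n(d_K)} = \prod_{p \in A \setminus Y'} \alpha_{p,n}^{-1} \geq 1.
\]

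The core input is to show that $A \setminus Y'$ consists only of primes growing with $|d_K|$. Suppose $\p$ lies over $p$ (with $p$ split or ramified in $K$) and $\p^t = (\alpha)$ is principal. In both cases $N(\p) = p$, so $N(\alpha) = p^t$. Expanding $\alpha = a + b\omega$ in an integral basis of $\OO_K$ and using $d_K < 0$, one gets
\[
N(\alpha) = \Big(a + \tfrac{b}{2}\Big)^2 + \frac{|d_K|}{4} b^2 \quad \text{or} \quad N(\alpha) = a^2 + \frac{|d_K|}{4} b^2
\]
according as $d_K \equiv 1 \pmod{4}$ or $d_K \equiv 0 \pmod{4}$. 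So $b \neq 0$ forces $|d_K| \leq 4 p^t$, while $b = 0$ gives $a^2 = p^t$, which is impossible because $t$ is odd. Note that the split case reduces to the ramified case for this purpose: if $p$ splits as $\p_1 \p_2$ then $[\p_1][\p_2]=1$, so $\p_1^t$ is principal iff $\p_2^t$ is, so the condition is symmetric. Consequently, for each fixed $p$ that splits or ramifies in $K$, $|d_K| > 4 p^t$ already implies $p \in Y'$.

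Given $\epsilon > 0$, I would fix $N$ large enough that the tail bound from the preceding lemma gives $\prod_{p \geq N} \alpha_{p,n}^{-1} < 1 + \epsilon$. Then for any discriminant $d_K$ with $|d_K| > 4 N^t$, every $p \leq N$ in $A$ lies in $Y'$, so $A \setminus Y' \subseteq \{p > N\}$, and therefore
\[
1 \leq \prod_{p \in A \setminus Y'} \alpha_{p,n}^{-1} \leq \prod_{p > N} \alpha_{p,n}^{-1} < 1 + \epsilon.
\]
Sending $d_K \to -\infty$ and then $N \to \infty$ yields the claimed limit. The main obstacle is the bookkeeping at the prime $2$: one must verify that the definition of $\chi_2$ given in the excerpt matches the actual behavior of $2$ in $K$ (ramified iff $2 \mid d_K$; split iff $d_K \equiv 1 \pmod{8}$; inert iff $d_K \equiv 5 \pmod{8}$). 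Once this identification and the two cases for the integral basis are in place, the oddness of $t$ does the real work by ruling out $b=0$ in the norm form.
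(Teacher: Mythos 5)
Your proposal is correct and matches the paper's argument essentially step for step: both express the probability as $\prod_{p\in Y'}\alpha_{p,n}$ via the preceding lemma, identify $f_n(d_K)=\prod_{p\in A}\alpha_{p,n}$ with $Y'\subseteq A$, use the norm form on $\OO_K$ to show that once $|d_K|>4p^t$ every split or ramified $p$ lies in $Y'$ (with oddness of $t$ ruling out $b=0$), and then apply the tail bound $\prod_{p\geq N}\alpha_{p,n}^{\pm1}\to 1$. The only difference is cosmetic: you split into $d_K\equiv 0,1\pmod 4$ to write the norm form, while the paper uses the single containment $\OO_K\subseteq \tfrac12\Z+\tfrac{\sqrt{d_K}}{2}\Z$.
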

\begin{proof}
Fix $N>0$. Consider a quadratic imaginary field $K$ with discriminant $d_K<-4N^t$ and a prime $p<N$. Let $\p_1$ be a prime of $\OO_K$ above $p$. We will show that $\p_1^t$ is not principal if and only if $\chi_p(d_K)\in \{0,1\}$.
\begin{itemize}
    \item Case 1: $\chi_p(d_K)=-1$. In this case $p$ remains inert in $\OO_K$, so the $\p_1=(p)$ and $\p_1^t=(p^t)$ is principal.
    \item Case 2: $\chi_p(d_K)\in\{0,1\}$. In this case $p$ either splits or ramifies in $\OO_K$. Say $p\OO_K=\p_1\p_2$, so $N(\p_1)=p$ and $N(\p_1^t)=p^t$ (it is possible that $\p_1=\p_2$). Now if $\p_1^t$ is principal then there is an element in $\OO_K$ of norm $p^t$. Since $\OO_K$ is a subset of the $\Z$ module generated by $\frac{1}{2}, \frac{\sqrt{d_K}}{2}$, this would imply that there is an integer solution to
    $\frac{a^2}{4}-d_k\frac{b^2}{4}=p^t$.
    However $-\frac{d_k}{4}>N^t>p^t$, so $b=0$. But then we have integer solution to $a^2=4p^t$, which is impossible since $t$ is odd. Therefore, neither of $\p_1^t$ or $\p_2^t$ can be principal. 
\end{itemize}
Let $Y'_K$ denote the set of rational primes $p$ for which at least one prime $\p$ of $\OO_K$ above $p$ has $\p^t$ not principal.
We see that for $d_K<-4N^t$, we have
$$\frac{\PPP_n\big[\big\langle [\p],\p\in X(K,\gamma)\big\rangle \text{ is }t\text{-torsion}\big]}{f_n(d_K)} =\prod_{p\in Y'_K, \;p\geq N}\alpha_{p,n} \prod_{p\geq N, \chi_p(d_K)\in\{0,1\}}\alpha_{p,n}^{-1}.$$
Finally notice that
$$\prod_{p\in Y_K'\;p\geq N}\alpha_{p,n} \prod_{p\geq N, \chi_p(d_K)\in\{0,1\}}\alpha_{p,n}^{-1}
\leq \prod_{p\geq N}\alpha_{p,n}^{-1} \leq 1+\frac{1}{(n-1)(N-1)^{n-1}},$$
and
$$\prod_{p\in Y_K',\;p\geq N}\alpha_{p,n} \prod_{ p\geq N, \chi_p(d_K)\in\{0,1\}}\alpha_{p,n}^{-1}
\geq \prod_{p\geq N}\alpha_{p,n} \geq 1-\frac{1}{(n-1)(N-1)^{n-1}}.$$
Since $N$ was arbitrary, we are done.
\end{proof}

\begin{corollary}\label{cor size odd >=3}
For any odd $t\geq 3$, we have
$$\lim_{d_K\to-\infty} \PPP_n\big[ \#\big\langle [\p],\p\in X(K,\gamma)\big\rangle=t\big]=0.$$
\end{corollary}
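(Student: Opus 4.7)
The plan is to sandwich the target event between two larger events whose probabilities the preceding proposition already pins down. Write $G = \big\langle [\p],\p\in X(K,\gamma)\big\rangle$. Since $G$ is an abelian subgroup of $\Cl(\OO_K)$, Lagrange's theorem guarantees that whenever $\#G=t$, every element of $G$ has order dividing $t$, so $G$ is $t$-torsion. The trivial group is vacuously $t$-torsion as well, and because we assume $t\geq 3$, the two events ``$\#G=t$'' and ``$G$ is trivial'' are disjoint subevents of ``$G$ is $t$-torsion''. This gives the inequality
$$\PPP_n[\#G=t]\;\leq\;\PPP_n[G\text{ is }t\text{-torsion}]\;-\;\PPP_n[G\text{ is trivial}].$$

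Next I would apply the preceding proposition twice: once with the given odd $t\geq 3$ to get $\PPP_n[G\text{ is }t\text{-torsion}]=f_n(d_K)(1+o(1))$, and once with $t$ replaced by the odd integer $1$, noting that ``$1$-torsion'' is a synonym for ``trivial'', to get $\PPP_n[G\text{ is trivial}]=f_n(d_K)(1+o(1))$. Subtracting the two asymptotics, the main terms cancel and the right-hand side of the displayed inequality becomes $f_n(d_K)\cdot o(1)$ as $d_K\to-\infty$.

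To finish, I need only a crude upper bound on $f_n(d_K)$. Since $f_n(d_K)=\prod_{p:\chi_p(d_K)\in\{0,1\}}\alpha_{p,n}$ is a product of factors each lying in $(0,1)$, we have $f_n(d_K)\leq 1$, so $\PPP_n[\#G=t]\leq o(1)$ and the claim follows.

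The step that requires the most care is the simultaneous invocation of the previous proposition at both $t=1$ and the given $t\geq 3$: the whole argument rests on the fact that the asymptotic \emph{does not depend on $t$} in its leading term, so the two probabilities agree to first order and force the difference to vanish. No other technical obstacle arises, because the arithmetic input (identifying which rational primes have a non-principal power above them in a quadratic imaginary ring of large discriminant) has already been carried out in the preceding proposition.
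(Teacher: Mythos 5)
Your proof takes essentially the same route as the paper's: the same sandwich inequality $\PPP_n[\#G=t]\leq \PPP_n[G\text{ is }t\text{-torsion}]-\PPP_n[G\text{ is trivial}]$, the same double application of the preceding proposition at the odd values $t$ and $1$, and the same cancellation of leading terms. The only (minor) deviation is at the end, and there your step is in fact the cleaner one: to pass from $\PPP_n[\cdot]/f_n(d_K)\to 0$ to $\PPP_n[\cdot]\to 0$ one needs an upper bound on $f_n(d_K)$, which you correctly supply as $f_n(d_K)\leq 1$ (each factor $\alpha_{p,n}\in(0,1)$), whereas the paper instead quotes the lower bound $f_n(d_K)\geq\zeta(n+1)/\zeta(n)$, which is true but not the bound that is doing the work.
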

\begin{proof}
Firstly notice that
\begin{equation*}
    \begin{split}
     &\PPP_n\big[ \#\big\langle [\p],\p\in X(K,\gamma)\big\rangle=t\big]\\
    &\leq \PPP_n\big[ \big\langle [\p],\p\in X(K,\gamma)\big\rangle \text{ is }t\text{-torsion}\big]
    -\PPP_n\big[ \big\langle [\p],\p\in X(K,\gamma)\big\rangle \text{ is }1\text{-torsion}\big].   
    \end{split}
\end{equation*}
Therefore,
$$\limsup_{d_K\to-\infty} \frac{\PPP_n\big[ \#\big\langle [\p],\p\in X(K,\gamma)\big\rangle=t\big]}{f_n(d_K)}\leq 1-1=0,$$
so
$$\lim_{d_K\to-\infty} \frac{\PPP_n\big[ \#\big\langle [\p],\p\in X(K,\gamma)\big\rangle=t\big]}{f_n(d_K)}=0.$$
Next note that for any $d_K$, we have
$$f_n(d_K)=\prod_{p: \chi_p(d_K)\in\{0,1\}}\alpha_p \geq \prod_{p}\alpha_p =\frac{\zeta(n+1)}{\zeta(n)}.$$
Therefore, we can conclude that
\[\lim_{d_K\to-\infty} \PPP_n\big[ \#\big\langle [\p],\p\in X(K,\gamma)\big\rangle=t\big]=0.\qedhere\]
\end{proof}

\begin{proposition}
For each quadratic imaginary field $K$, denote its discriminant as $d_K$.
Fix integers $n\geq 2$ and even integer $t\geq 2$. Then we have
$$\lim_{d_K\to-\infty}\frac{\PPP_n\big[ \big\langle [\p],\p\in X(K,\gamma)\big\rangle \text{ is }t\text{-torsion}\big]}{g_n(d_K)}=1.$$
\end{proposition}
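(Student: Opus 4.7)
The plan is to mirror the proof of the preceding proposition for odd $t$, changing only the case analysis of when $\p^t$ is principal. The key new phenomenon is that for \emph{ramified} primes and even $t$, the ideal $\p^t$ \emph{is} principal. This is precisely the reason the normalizing factor switches from $f_n$ (split or ramified primes) in the odd case to $g_n$ (split primes only) in the even case.

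First I would fix $N > 0$ and restrict to quadratic imaginary fields $K$ with $d_K < -4 N^t$. For each rational prime $p < N$ and prime $\p$ of $\OO_K$ above $p$, I would show that $\p^t$ is non-principal if and only if $\chi_p(d_K) = 1$, through three cases. (i) If $\chi_p(d_K) = -1$, then $p$ is inert, $\p = (p)$, so $\p^t = (p^t)$ is principal. (ii) If $\chi_p(d_K) = 0$, then $p$ ramifies, $\p^2 = (p)$, and since $t$ is even, $\p^t = (p^{t/2})$ is principal; this is the new input compared to the odd case. (iii) If $\chi_p(d_K) = 1$, then $p\OO_K = \p_1\p_2$ with $\p_1 \neq \p_2$, and I would argue that $\p_1^t$ is not principal: if $\p_1^t = (\alpha)$ then $N(\alpha) = p^t$, and the norm-form argument from the odd case (using $\OO_K \subseteq \tfrac{1}{2}\Z + \tfrac{\sqrt{d_K}}{2}\Z$ together with $|d_K|/4 > p^t$) forces $\alpha \in \Z$, hence $\alpha = \pm p^{t/2}$; but then $(\alpha) = (p)^{t/2} = \p_1^{t/2}\p_2^{t/2}$, which by unique factorization of ideals contradicts $(\alpha) = \p_1^t$ since $\p_1 \neq \p_2$.

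Having established that the set $Y'_K$ of rational primes $p$ for which some prime above $p$ satisfies $\p^t$ non-principal agrees, below $N$, with $\{p < N : \chi_p(d_K) = 1\}$, the previous lemma gives
\[
\frac{\PPP_n\bigl[\langle [\p],\p\in X(K,\gamma)\rangle \text{ is }t\text{-torsion}\bigr]}{g_n(d_K)} \;=\; \prod_{\substack{p \geq N \\ p \in Y'_K}} \alpha_{p,n} \;\prod_{\substack{p \geq N \\ \chi_p(d_K) = 1}} \alpha_{p,n}^{-1},
\]
since the primes below $N$ in the numerator and denominator cancel exactly. This ratio is sandwiched between $\prod_{p \geq N} \alpha_{p,n}$ and $\prod_{p \geq N} \alpha_{p,n}^{-1}$, both of which by the earlier estimate lie within $1 \pm \tfrac{1}{(n-1)(N-1)^{n-1}}$. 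Since $N$ was arbitrary, letting $d_K \to -\infty$ and then $N \to \infty$ yields the claimed limit.

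There is no genuine obstacle here; the only subtlety is remembering that the ramified contribution flips between the two parity regimes. For odd $t$, a ramified $\p$ satisfies $\p^t = \p \cdot (p^{(t-1)/2})$, which is principal iff $\p$ is, and $\p$ is non-principal for $|d_K|$ large since $a^2 = 4p$ has no integer solutions. For even $t$, the factorization $\p^t = (p^{t/2})$ is automatically principal, so ramified primes drop out of $Y'_K$, leaving only the split primes and explaining the appearance of $g_n$ instead of $f_n$.
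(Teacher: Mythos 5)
Your proof is correct and follows essentially the same approach as the paper: the same three-case analysis on $\chi_p(d_K)$ with the same norm-form argument in the split case, and the same sandwich estimate after cancelling the primes below $N$. The one point of divergence — observing that for even $t$ the ramified primes have $\p^t = (p^{t/2})$ principal — is exactly the observation the paper makes in its Case 2, so the two arguments are identical in substance.
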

\begin{proof}
Say $t=2t_1$, fix $N>0$. Consider a quadratic imaginary field $K$ with discriminant $d_K<-4N^t$ and a prime $p<N$. Let $\p_1$ be a prime of $\OO_K$ above $p$. We will show that $\p_1^t$ is not principal if and only if $\chi_p(d_K)=1$.
\begin{itemize}
    \item Case 1: $\chi_p(d_K)=-1$. In this case $p$ remains inert in $\OO_K$, so the $\p_1=(p)$ and $\p_1^t=(p^t)$ is principal.
    \item Case 2: $\chi_p(d_K)=0$. This means that $p$ ramifies so $p\OO_K=\p_1^2$. Therefore $\p_1^t=p^{t_1}\OO_K$.
    \item Case 3: $\chi_p(d_K)=1$. In this case $p$ splits in $\OO_K$. Say $p\OO_K=\p_1\p_2$, so $N(\p_1)=p$ and $N(\p_1^t)=p^t$ and $\p_1\neq\p_2$. Now if $\p_1^t$ is principal, say $\p_1^t=\alpha\OO_K$, then $N(\alpha)=p^t$. Since $\OO_K$ is a subset of the $\Z$ module generated by $\frac{1}{2},\frac{\sqrt{d_K}}{2}$, let $\alpha=\frac{a+b\sqrt{d_k}}{2}$. Then
    $\frac{a^2}{4}-d_k\frac{b^2}{4}=p^t$.
    However $-\frac{d_k}{4}>N^t>p^t$, so $b=0$. Thus $a=\pm 2p^{t_1}$ and $\alpha=\pm p^{t_1}$. This means that $\p_1^t=\alpha\OO_K= p^{t_1}\OO_K$. But $p^{t_1}\OO_K=\p_1^{t_1}\p_2^{t_1}$. Thus we see that $\p_1=\p_2$, which is a contradiction.
\end{itemize}
Let $Y'_K$ denote the set of rational primes $p$ for which at least one prime $\p$ of $\OO_K$ above $p$ has $\p^t$ not principal.
We therefore see that for $d_K<-4N^t$, we have
$$\frac{\PPP_n\big[ \big\langle [\p],\p\in X(K,\gamma)\big\rangle \text{ is }t\text{-torsion}\big]}{g_n(d_K)} =\prod_{p\in Y_K',\;p\geq N}\alpha_{p,n} \prod_{p\geq N, \chi_p(d_K)=1}\alpha_{p,n}^{-1}.$$
Finally, notice that
$$\prod_{p\in Y_K',\;p\geq N}\alpha_{p,n} \prod_{ p\geq N, \chi_p(d_K)=1}\alpha_{p,n}^{-1}
\leq \prod_{\; p\geq N}\alpha_{p,n}^{-1} \leq 1+\frac{1}{(n-1)(N-1)^{n-1}},$$
and
$$\prod_{p\in Y_K',\;p\geq N}\alpha_{p,n} \prod_{p\geq N, \chi_p(d_K)=1}\alpha_{p,n}^{-1}
\geq \prod_{p\geq N}\alpha_{p,n} \geq 1-\frac{1}{(n-1)(N-1)^{n-1}}.$$
Since $N$ was arbitrary, we are done.
\end{proof}

\begin{customthm}{\ref{Thm: <X K gamma>=G}}
Suppose $G$ is a finite abelian group which is not 2-torsion (in particular, $G$ is not trivial). Fix $n\geq 2$. For each quadratic imaginary field $K$, denote its discriminant as $d_K$. Then we have
$$\lim_{d_K\to-\infty}\PPP_n\big[ \big\langle [\p],\p\in X(K,\gamma)\big\rangle \cong G\big]=0.$$    
\end{customthm}
\begin{proof}
Let $|G|=m$. If $m$ is odd, then we are done by Corollary \ref{cor size odd >=3}. Therefore suppose that $m$ is even. Since $G$ is not $2$-torsion, we see that
\begin{equation*}
    \begin{split}
&\PPP_n\big[ \big\langle [\p],\p\in X(K,\gamma)\big\rangle \cong G\big]\\
&\leq \PPP_n\big[ \big\langle [\p],\p\in X(K,\gamma)\big\rangle \text{ is }m\text{-torsion}\big]
-\PPP_n\big[ \big\langle [\p],\p\in X(K,\gamma)\big\rangle \text{ is }2\text{-torsion}\big].
    \end{split}
\end{equation*}
Therefore,
$$\limsup_{d_K\to-\infty} \frac{\PPP_n\big[ \big\langle [\p],\p\in X(K,\gamma)\big\rangle \cong G\big]}{g_n(d_K)}\leq 1-1=0,$$
and hence
$$\lim_{d_K\to-\infty} \frac{\PPP_n\big[ \big\langle [\p],\p\in X(K,\gamma)\big\rangle\cong G\big]}{g_n(d_K)}=0.$$
Next, note that for any $d_K$, we have
$$g_n(d_K)=\prod_{p:\; \chi_p(d_K)=1}\alpha_{p,n} \geq \prod_{p}\alpha_{p,n} =\frac{\zeta(n+1)}{\zeta(n)}.$$
Therefore, we can conclude that
\[\lim_{d_K\to-\infty} \PPP_n\big[ \big\langle [\p],\p\in X(K,\gamma)\big\rangle\cong G\big]=0.\qedhere\]
\end{proof}

\section{Distribution of $r_K(p)$}\label{Sec: rkp}
In this section we consider the splitting of a prime $p$ in $\OO_{\Q(\alpha)}$ as we sample $\alpha$ from algebraic integers $\A'_m(H)$. Our goal is to prove Theorem \ref{Thm: prob rkp=i}.
Denote 
$$\FF_m(H)=\{f\in \Z[x]\mid \deg(f)=m,H(f)\leq H, f\text{ is monic}\},$$
and
$$\LL_m'(H)=\{f(x)\in \FF_m(H)\mid f\text{ is irreducible in }\Z[x]\}.$$
It is clear that $\#\A_m'(H)=m\#\LL'_m(H)$.
By Lemma \ref{Hilbert Irr bound}, we know that $\#\LL'_m(H)=(2H+1)^{m}+O(H^{m-\frac{1}{2}}\log(H))$. Further, note that $\#\FF_m(H)=(2H+1)^{m}$. This means that $\FF_m(H)-\LL'_m(H)=O(H^{m-\frac{1}{2}}\log(H))$ and $\lim_{H\to\infty}\frac{\LL'_m(H)}{\FF_m(H)}=1$.

\begin{lemma}\label{Lemma: irr to all}
Given a positive integer $m$ and a subset $A\subseteq \bigcup_{H=1}^{\infty}\FF_m(H)$. Then
$$\lim_{H\to\infty} \frac{\#(A\cap \LL'_m(H))}{\#\LL'_m(H)}= \lim_{H\to\infty}\frac{\#(A\cap \FF_m(H))}{\#\FF_m(H)}.$$
\end{lemma}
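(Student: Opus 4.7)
The plan is to exploit that $\LL'_m(H)$ contains nearly all of $\FF_m(H)$: from Lemma \ref{Hilbert Irr bound} together with the exact formula $\#\FF_m(H) = (2H+1)^m$, the paper has already noted that $\#\FF_m(H) - \#\LL'_m(H) = O(H^{m-\frac{1}{2}}\log(H))$ and $\#\LL'_m(H)/\#\FF_m(H)\to 1$. So swapping $\FF_m(H)$ for $\LL'_m(H)$ in either the numerator or the denominator of a density ratio perturbs it by a negligible amount.

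More precisely, I would decompose
$$\#(A\cap\LL'_m(H)) \;=\; \#(A\cap\FF_m(H)) \;-\; \#\bigl(A\cap(\FF_m(H)\setminus\LL'_m(H))\bigr),$$
and bound the second term trivially by $\#\FF_m(H) - \#\LL'_m(H) = O(H^{m-\frac{1}{2}}\log(H))$. Dividing through by $\#\LL'_m(H)$, which is asymptotic to $(2H+1)^m \sim (2H)^m$, the error contribution is at most $O(H^{-\frac{1}{2}}\log(H))$ and therefore tends to $0$. The surviving main term is
$$\frac{\#(A\cap\FF_m(H))}{\#\LL'_m(H)} \;=\; \frac{\#(A\cap\FF_m(H))}{\#\FF_m(H)}\cdot\frac{\#\FF_m(H)}{\#\LL'_m(H)},$$
where the second factor tends to $1$.

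Combining these, both sides of the claimed identity have identical liminf and limsup; in particular, if either side converges, the other does, and to the same value. There is no real obstacle here: the whole argument is elementary asymptotic bookkeeping once the key density estimate $\#\LL'_m(H)/\#\FF_m(H)\to 1$ is in hand, and that has already been recorded just before the lemma statement.
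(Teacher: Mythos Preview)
Your proposal is correct and follows essentially the same approach as the paper: both arguments rest on the decomposition $\#(A\cap\LL'_m(H)) = \#(A\cap\FF_m(H)) - \#\bigl(A\cap(\FF_m(H)\setminus\LL'_m(H))\bigr)$, the bound $\#(\FF_m(H)\setminus\LL'_m(H)) = O(H^{m-1/2}\log H)$, and the fact that $\#\LL'_m(H)/\#\FF_m(H)\to 1$. Your formulation is in fact slightly cleaner and more symmetric (you deduce equality of liminf and limsup on both sides simultaneously, whence either limit existing forces the other), whereas the paper only records the direction ``$\FF_m$-limit exists $\Rightarrow$ $\LL'_m$-limit exists and agrees,'' but this is a cosmetic difference.
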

\begin{proof}
Suppose $\lim_{H\to\infty}\frac{\#(A\cap \FF_m(H))}{\#\FF_m(H)}=a$.
Then we have
$$\limsup_{H\to\infty} \frac{\#(A\cap \LL'_m(H))}{\#\LL'_m(H)}
\leq \limsup_{H\to\infty} \frac{\#(A\cap \FF_m(H))}{\#\LL'_m(H)}
=\lim_{H\to\infty}\frac{\#(A\cap \FF_m(H))}{\#\FF_m(H)}=a.$$
Further, notice that
\begin{equation*}
\begin{split}
\liminf_{H\to\infty} \frac{\#(A\cap \LL'_m(H))}{\#\LL'_m(H)}
&\geq \liminf_{H\to\infty} \frac{\#(A\cap \FF_m(H))-\#(\FF_m(H)\setminus \LL'_m(H)) }{\#\FF_m(H)}\\
&=\lim_{H\to\infty}\frac{\#(A\cap \FF_m(H))}{\#\FF_m(H)}-\lim_{H\to\infty} \frac{O(H^{m-\frac{1}{2}}\log(H))}{(2H+1)^{m}}=a.\qedhere
\end{split}
\end{equation*}
\end{proof}

Denote
$$\FF_{m,p}=\{f(x)\in (\Z/p\Z)[x]\mid \deg(f)=m, f\text{ is monic}\}.$$

\begin{lemma}
Given $m\in\Z_{>0}$ and a prime $p\geq m$, we have $\#\{f\in \FF_{m,p}\mid  \disc(f)=0\}\leq p^{m-1}m$. For $\alpha\in \A'_m(H)$, if $p\mid [\OO_{\Q(\alpha)}:\Z[\alpha]]$ then $p\mid \disc(\alpha)$. Moreover, we have
$$\limsup_{H\to\infty}\frac{\#\{\alpha\in\A'_m(H)\mid p\mid \disc(\alpha)\}}{\#\A'_m(H)}\leq \frac{m}{p}.$$
\end{lemma}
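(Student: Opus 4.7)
The plan is to handle the three assertions in turn, each of which reduces to a routine counting argument. The core combinatorial input is the first claim, after which the remaining two follow by standard number theory and equidistribution.

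For the first inequality, I would use the fact that a monic $f\in\F_p[x]$ satisfies $\disc(f)=0$ iff $f$ has a repeated irreducible factor over $\F_p$, i.e.\ $f=g^2h$ for some monic irreducible $g\in\F_p[x]$ of degree $d\geq 1$ and monic $h$ of degree $m-2d$. Letting $N_d$ denote the number of monic irreducibles of degree $d$ in $\F_p[x]$, the well-known bound $N_d\leq p^d/d$ (obtained by counting roots of $x^{p^d}-x$ in $\overline{\F_p}$) gives
\[\#\{f\in\FF_{m,p}:\disc(f)=0\}\;\leq\;\sum_{d=1}^{\lfloor m/2\rfloor}\frac{p^d}{d}\cdot p^{m-2d} \;=\; p^{m-1}\sum_{d=1}^{\lfloor m/2\rfloor}\frac{1}{d\,p^{d-1}}.\]
For $p\geq 2$ the tail sum is bounded by $\sum_{d\geq 1}\frac{1}{d\,2^{d-1}}=2\ln 2<2$, so the entire expression is $\leq 2p^{m-1}\leq m p^{m-1}$ for $m\geq 2$ (and the case $m=1$ is trivial since no repeated factor exists).

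For the second assertion, I would invoke the classical identity $\disc(\alpha)=[\OO_{\Q(\alpha)}:\Z[\alpha]]^2\cdot\disc(\OO_{\Q(\alpha)})$. If $p$ divides the index, then $p^2\mid\disc(\alpha)$, hence $p\mid\disc(\alpha)$.

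For the limsup bound, I would first pass from $\A'_m(H)$ to monic polynomials. Since each irreducible monic $f\in\LL'_m(H)$ contributes its $m$ conjugate roots to $\A'_m(H)$ and since $\disc(\alpha)=\disc(f_\alpha)$ depends only on $f_\alpha$, the ratio on the left equals $\frac{\#\{f\in\LL'_m(H):p\mid\disc(f)\}}{\#\LL'_m(H)}$, which by Lemma~\ref{Lemma: irr to all} has the same limsup as $\frac{\#\{f\in\FF_m(H):p\mid\disc(f)\}}{\#\FF_m(H)}$. Since the condition $p\mid\disc(f)$ depends only on $\bar f\in\FF_{m,p}$, and each class $\bar f\in\FF_{m,p}$ has $\big(\tfrac{2H+1}{p}+O(1)\big)^m=\tfrac{(2H+1)^m}{p^m}+O_{m,p}(H^{m-1})$ lifts in $\FF_m(H)$, the first assertion yields
\[\#\{f\in\FF_m(H):p\mid\disc(f)\}\;\leq\; m p^{m-1}\cdot\Big(\tfrac{(2H+1)^m}{p^m}+O_{m,p}(H^{m-1})\Big)\;=\;\tfrac{m(2H+1)^m}{p}+O_{m,p}(H^{m-1}).\]
Dividing by $\#\FF_m(H)=(2H+1)^m$ and taking $H\to\infty$ gives the bound $m/p$.

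There is no genuine obstacle here; the only step requiring mild care is the mod-$p$ equidistribution estimate and the clean application of Lemma~\ref{Lemma: irr to all} to transfer the ratio from $\LL'_m(H)$ to $\FF_m(H)$, where the irreducibility constraint is negligible in density.
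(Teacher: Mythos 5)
Your proof is correct but proceeds along a genuinely different route for the first assertion. Where the paper bounds $\#\{f\in\FF_{m,p}\mid\disc(f)=0\}$ by exploiting the structure of $\disc(f)$ as a polynomial in the single coefficient $a_{m-1}$ --- invoking Lemma~\ref{An-1^n} to see that the leading coefficient is $\pm(m-1)^{m-1}$, which is a unit mod $p$ precisely because $p\geq m$, and then concluding there are at most $m$ roots for each choice of the remaining $m-1$ coefficients --- you instead parametrize degenerate polynomials by a repeated irreducible factor $g$ of degree $d$ together with an arbitrary monic cofactor $h$ of degree $m-2d$, and use the elementary bound $N_d\leq p^d/d$ on the number of monic irreducibles. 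This is more elementary, does not depend on Lemma~\ref{An-1^n}, and in fact works under the weaker hypothesis $p\geq 2$; it yields a bound of roughly $2\ln 2\cdot p^{m-1}$, which sits under $mp^{m-1}$ once $m\geq 2$, the case $m=1$ being vacuous as you note. For the second assertion you invoke the exact identity $\disc(\alpha)=[\OO_{\Q(\alpha)}:\Z[\alpha]]^2\cdot\disc(\OO_{\Q(\alpha)})$, whereas the paper uses only the cruder containment $\OO_{\Q(\alpha)}\subseteq\frac{1}{\disc(\alpha)}\Z[\alpha]$ to get $[\OO_{\Q(\alpha)}:\Z[\alpha]]\mid\disc(\alpha)^m$; both are standard and both suffice. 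The third assertion is handled by essentially the same mod-$p$ equidistribution count, the only difference being that you pass from $\LL'_m(H)$ to $\FF_m(H)$ by citing Lemma~\ref{Lemma: irr to all}, while the paper carries the error term $O(H^{m-\frac{1}{2}}\log H)$ through the chain of inequalities directly --- a harmless repackaging.
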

\begin{proof}
Writing $f(x)=x^m+a_{1}x^{m-1}+\dots+a_m$, we know that $\disc(f)$ is a polynomial in $a_{1},a_{2},\dots,a_{m}$. Write it as $\sum_{i=0}^{m}f_i(a_1,\dots,a_{m-2},a_m)a_{m-1}^{i}$. We know from Lemma~\ref{An-1^n} that $f_{m}(a_1,\dots,a_{m-2},a_m)=\pm (m-1)^{m-1}$. Once we choose $a_1,\dots,a_{m-2},a_{m}\in \Z/p\Z$, we have at most $m$ choices for $a_{m-1}$. Therefore,
$\#\{f\in \FF_{m,p}\mid  \disc(f)=0\}\leq p^{m-1}m$.
    
We know that for any $\alpha \in \Zbar$, we have $\Z[\alpha] \subseteq \OO_{\Q(\alpha)} \subseteq \frac{1}{\disc(\alpha)}\Z[\alpha]$. Consequently, $[\OO_{\Q(\alpha)}:\Z[\alpha]] \mid \disc(\alpha)^m$. Therefore, if $p \mid [\OO_{\Q(\alpha)}:\Z[\alpha]]$, then $p \mid \disc(\alpha)$. Finally, notice that
\begin{equation*}
    \begin{split}
\frac{\#\{\alpha\in\A'_m(H)\mid p\mid \disc(\alpha)\}}{\#\A'_m(H)}
&=\frac{\#\{f\in \LL'_m(H)\mid p\mid \disc(f)\}}{\#\LL'_m(H)}\\
&\leq \frac{\#\{f\in \FF_m(H)\mid p\mid \disc(f)\}}{(2H+1)^m+O(H^{m-\frac{1}{2}}\log(H))}\\
&\leq \frac{\#\{f\in \FF_{m,p}\mid  \disc(f)=0\}(\frac{2H+1}{p}+1)^{m}}{(2H+1)^m+O(H^{m-\frac{1}{2}}\log(H))}\\
&\leq \frac{p^{m-1}m(\frac{2H+1}{p}+1)^{m}}{(2H+1)^m+O(H^{m-\frac{1}{2}}\log(H))}\\
&=\frac{1}{p}\frac{m}{\big(\frac{2H+1}{2H+1+p}\big)^m+O\Big(\big(\frac{H}{2H+1+p}\big)^m  \frac{\log(H)}{\sqrt{H}}\Big)}.
    \end{split}
\end{equation*}
Therefore,
\[\limsup_{H\to\infty}\frac{\#\{\alpha\in\A'_m(H)\mid p\mid \disc(\alpha)\}}{\#\A'_m(H)} \leq \frac{m}{p}.\qedhere\]

\end{proof}

Given $1\leq i\leq m$ and prime $p$, denote
$$f(m,i,p)=\frac{\#\{f\in \FF_{m,p}\mid f \text{ has exactly }i\text{ distict irreducible factors}\}}{\#\FF_{m,p}},$$
and $f(m,i)=\lim_{p\to\infty}f(m,i,p)$.

\begin{lemma}
Given $1\leq i\leq m$ and a prime $p\geq m$, we have $g(m,i,p)=f(m,i,p)+O(\frac{1}{p})$. And hence $g(m,i)=f(m,i)$.
\end{lemma}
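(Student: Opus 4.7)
The plan is to invoke Dedekind's theorem on the factorization of a rational prime $p$ in a number ring: whenever $p\nmid [\OO_{\Q(\alpha)}:\Z[\alpha]]$, the splitting of $p$ in $\OO_{\Q(\alpha)}$ mirrors the factorization of the minimal polynomial $F_\alpha(x)$ modulo $p$, and in particular $r_{\Q(\alpha)}(p)$ equals the number of distinct monic irreducible factors of $F_\alpha(x)$ in $\F_p[x]$. The preceding lemma shows that the density of $\alpha\in\A'_m(H)$ with $p\mid\disc(\alpha)$ (which includes all $\alpha$ with $p\mid [\OO_{\Q(\alpha)}:\Z[\alpha]]$) is at most $m/p$ asymptotically, so this ``bad'' set can be absorbed into an $O(1/p)$ error term.

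First, I would partition $\A'_m(H)$ according to whether $p$ divides the index $[\OO_{\Q(\alpha)}:\Z[\alpha]]$. On the bad set the contribution to $g(m,i,p)$ is $O(1/p)$ uniformly in $H$ (for $H$ large), while on the good set the condition $r_{\Q(\alpha)}(p)=i$ is by Dedekind's theorem equivalent to $F_\alpha(x)\bmod p$ having exactly $i$ distinct irreducible factors in $\F_p[x]$.

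Second, I would apply Lemma~\ref{Lemma: irr to all} to replace counting over $\LL'_m(H)$ by counting over the full set $\FF_m(H)$ of monic polynomials of height at most $H$. The event ``reduction mod $p$ has exactly $i$ distinct irreducible factors'' depends only on the residue class of the coefficient tuple $(a_1,\ldots,a_m)\in[-H,H]^m$ modulo $p$. Since each coordinate becomes equidistributed in $\Z/p\Z$ as $H\to\infty$ (with a relative error $O(p/H)$ per coordinate that vanishes in the limit), the density of $f\in\FF_m(H)$ whose reduction lies in a fixed subset of $\FF_{m,p}$ converges to that subset's proportion in $\FF_{m,p}$. Applied to the subset of polynomials with exactly $i$ distinct irreducible factors, this gives $f(m,i,p)$.

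Combining these two steps yields $g(m,i,p)=f(m,i,p)+O(1/p)$, and letting $p\to\infty$ gives $g(m,i)=f(m,i)$. The main obstacle I expect is justifying the Dedekind--Kummer correspondence in the precise form needed, namely that $r_{\Q(\alpha)}(p)$ counts the \emph{distinct} irreducible factors of $F_\alpha\bmod p$ rather than factors with multiplicity, and ensuring the implicit constant in the $O(1/p)$ error is uniform in $H$; the equidistribution step and the reduction to $\FF_{m,p}$ are routine once that is in place.
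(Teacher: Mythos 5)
Your proposal is correct and follows essentially the same route as the paper: bound the ``bad'' set via the preceding discriminant lemma, apply Dedekind--Kummer on the complement, pass from $\LL'_m(H)$ to $\FF_m(H)$ via Lemma~\ref{Lemma: irr to all}, and finish with equidistribution of coefficient tuples mod~$p$. The only cosmetic difference is that the paper conditions directly on $p\nmid\disc(f)$ (a condition that makes sense for arbitrary monic $f\in\FF_m(H)$, not just irreducible ones), which keeps a single hypothesis fixed throughout the chain of approximations; your conditioning on $p\nmid[\OO_{\Q(\alpha)}:\Z[\alpha]]$ is equally valid after noting the containment you already mention, but is slightly less clean to carry over to $\FF_m(H)$.
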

\begin{proof}
Suppose we have $\alpha\in \A'_m(H)$ such that $p\nmid \disc(\alpha)$. Then we know that $p\nmid [\OO_{\Q(\alpha)}:\Z[\alpha]]$. Hence, by Dedekind Kummer theorem we know that $r_{\Q(\alpha)}(p)=i$ if and only if the minimal polynomial of $\alpha$ factors into $i$ distinct factors mod $p$. Therefore, we see that
\begin{align*}
g(m,i,p)&=\lim_{H\to\infty}\frac{\#\{\alpha\in\A'_m(H)\mid r_{\Q(\alpha)}(p)=i\}}{\#\A'_m(H)}\\
&=\lim_{H\to\infty}\frac{\#\{\alpha\in\A'_m(H)\mid r_{\Q(\alpha)}(p)=i,p\nmid \disc(\alpha)\}}{\#\A'_m(H)}+\Op\\
&=\lim_{H\to\infty}\frac{\#\{f\in\LL'_m(H)\mid f\text{ mod $p$ has }i\text{ distinct factors} ,p\nmid \disc(f)\}}{\#\LL'_m(H)}+\Op\\
&=\lim_{H\to\infty}\frac{\#\{f\in\FF_m(H)\mid f\text{ mod $p$ has }i\text{ distinct factors} ,p\nmid \disc(f)\}}{\#\FF_m(H)}+\Op\\
&=\lim_{H\to\infty}\frac{f(m,i,p)p^{m} (\frac{2H+1}{p}+O(1))^{m}}{(2H+1)^m}+\Op=f(m,i,p)+\Op.\qedhere
\end{align*}
\end{proof}

Now our task boils down to computing $f(m,i)$.
Let $a_m(p)$ denote the number of irreducible monic polynomials of degree $m$ in $(\Z/p\Z)[x]$. It is well known that
$$a_{m}(p)=\frac{1}{m}\sum_{d \mid m}\mu(\frac{m}{d})p^d=\frac{1}{m}p^m+O(p^{\frac{m}{2}}).$$
If $\lambda$ is a partition, denote the number of times $i$ occurs in $\lambda$ as $b_i(\lambda)$. Let $\PP(m,i)$ denote the set of all partitions of $m$ into $i$ parts.

\begin{lemma}
Given $1\leq i\leq m$, we have
$$f(m,i)=\sum_{\lambda\in \PP(m,i)}\prod_{n=1}^{m}\frac{1}{n^{b_n(\lambda)}b_n(\lambda)!}.$$
\end{lemma}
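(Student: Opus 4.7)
The plan is to separate squarefree and non-squarefree monic polynomials of degree $m$ over $\mathbb{F}_p$, show that the non-squarefree contribution vanishes as $p\to\infty$, and then count squarefree polynomials with $i$ distinct irreducible factors by classifying them according to the multiset of degrees of those factors.

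First I would recall the classical identity: the number of squarefree monic polynomials of degree $m\geq 2$ in $\FF_{m,p}$ equals $p^m - p^{m-1}$, so the density of non-squarefree elements in $\FF_{m,p}$ is exactly $1/p$. Their contribution to $f(m,i,p)$ is therefore $O(1/p)$ and disappears in the limit. Hence it suffices to count squarefree $f\in\FF_{m,p}$ with exactly $i$ distinct irreducible factors.

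Any such $f$ can be written uniquely as $f=q_1q_2\cdots q_i$ for distinct monic irreducibles $q_j$, with $\sum_{j=1}^{i}\deg(q_j)=m$. The unordered multiset $\{\deg(q_1),\ldots,\deg(q_i)\}$ defines a partition $\lambda\in\PP(m,i)$ with $b_n(\lambda)=\#\{j:\deg(q_j)=n\}$. Conversely, for each fixed $\lambda$, the number of squarefree monic $f$ realizing $\lambda$ equals $\prod_{n=1}^{m}\binom{a_n(p)}{b_n(\lambda)}$, since we independently choose $b_n(\lambda)$ distinct monic irreducibles of degree $n$ for each $n$. Using $a_n(p)=\tfrac{p^n}{n}+O(p^{n/2})$ and expanding the falling factorial gives
$$\binom{a_n(p)}{b_n(\lambda)}=\frac{p^{n\,b_n(\lambda)}}{n^{b_n(\lambda)}\,b_n(\lambda)!}+O\!\left(p^{n\,b_n(\lambda)-1/2}\right),$$
and taking the product over $n$, using $\sum_{n}n\,b_n(\lambda)=m$, yields
$$\prod_{n=1}^{m}\binom{a_n(p)}{b_n(\lambda)}=p^{m}\prod_{n=1}^{m}\frac{1}{n^{b_n(\lambda)}\,b_n(\lambda)!}+O(p^{m-1/2}).$$
Summing over the finite set $\PP(m,i)$, dividing by $\#\FF_{m,p}=p^m$, and absorbing the $O(1/p)$ non-squarefree error gives
$$f(m,i,p)=\sum_{\lambda\in \PP(m,i)}\prod_{n=1}^{m}\frac{1}{n^{b_n(\lambda)}\,b_n(\lambda)!}+O(p^{-1/2}),$$
and passing to the limit $p\to\infty$ produces the claimed formula.

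No step is a genuine obstacle; the only bookkeeping that requires care is the uniform control of error terms in the falling-factorial expansion across all $\lambda$, but since $|\PP(m,i)|$ is a constant independent of $p$, a single $O(p^{-1/2})$ bound suffices.
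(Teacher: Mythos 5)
Your proof is correct and takes essentially the same route as the paper: discard the non-squarefree polynomials (density $O(1/p)$), stratify the squarefree ones by the partition of $m$ given by the degrees of their irreducible factors, count each stratum by $\prod_n \binom{a_n(p)}{b_n(\lambda)}$, and pass to the limit using $a_n(p)=\tfrac{1}{n}p^n+O(p^{n/2})$. The only cosmetic difference is that you invoke the exact squarefree count $p^m-p^{m-1}$, whereas the paper bounds the discard set via the estimate $\#\{f\in\FF_{m,p}\mid \disc(f)=0\}\leq mp^{m-1}$ established in the preceding lemma; both give the same $O(1/p)$ error.
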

\begin{proof}
First we notice that $f$ has repeated roots if and only if $\disc(f)=0$. Next, for each $f\in \FF_{m,p}$ with $\disc(f)\neq 0$, associate the partition $\lambda$ such that $b_n(\lambda)$ is the number of irreducible factors of $f$ of degree $n$. Now, notice that
\begin{align*}
f(m,i,p)&=\frac{\#\{f\in \FF_{m,p}\mid f \text{ has exactly }i\text{ distict irreducible factors}\}}{\#\FF_{m,p}}\\
&=\frac{\#\{f\in\FF_{m,p}\mid \disc(f)=0\}}{\#\{f\in \FF_{m,p}\}}
+ \sum_{\lambda\in\PP(m,i)}\frac{\#\{f\in \FF_{m,p}\mid f\text{ associated to }\lambda\}}{\#\{f\in \FF_{m,p}\}}\\
&=O\Big(\frac{1}{p}\Big)+\sum_{\lambda\in\PP(m,i)}\frac{1}{p^m}\prod_{n=1}^{m}\binom{a_n(p)}{b_n(\lambda)}\\
\end{align*}
\begin{align*}
&=O\Big(\frac{1}{p}\Big)+\sum_{\lambda\in\PP(m,i)}\frac{1}{p^m}\prod_{n=1}^{m}\frac{(a_n(p)+O(1))^{b_n(\lambda)}}{b_n(\lambda)!}\\
&=O\Big(\frac{1}{p}\Big) +\sum_{\lambda\in\PP(m,i)}\frac{1}{p^m}\prod_{n=1}^{m}\frac{(\frac{1}{n}p^n+O(p^{\frac{n}{2}}))^{b_n(\lambda)}}{b_n(\lambda)!}\\
&=\sum_{\lambda\in\PP(m,i)}\frac{1}{p^m}\prod_{n=1}^{m}\frac{\frac{1}{n^{b_n(\lambda)}}p^{nb_n(\lambda)}+O(p^{nb_n(\lambda)-\frac{n}{2}})}{b_n(\lambda)!}+O\Big(\frac{1}{p}\Big)\\
&=\sum_{\lambda\in\PP(m,i)}\prod_{n=1}^{m}\frac{1}{n^{b_n(\lambda)}b_n(\lambda)!} +\Op.\qedhere
\end{align*}
\end{proof}

\begin{customthm}{\ref{Thm: prob rkp=i}}
Given $m$ and $i$ such that $1\leq i\leq m$, $g(m,i)$ is the coefficient of $y^i$ in $\frac{\prod_{j=0}^{m-1}(y+j)}{m!}$.    
\end{customthm}
\begin{proof}
We know that $g(m,k)=\sum_{\lambda\in \PP(m,k)}\prod_{n=1}^{m}\frac{1}{n^{b_n(\lambda)}b_n(\lambda)!}$. Now consider the power series
\begin{equation*}
\begin{split}
\sum_{m=0}^{\infty}\sum_{k=1}^mx^my^kg(m,k) 
&= \sum_{m=0}^{\infty}\sum_{k=1}^mx^my^k \sum_{\lambda\in \PP(m,k)}\prod_{n=1}^{m}\frac{1}{n^{b_n(\lambda)}b_n(\lambda)!} 
=\prod_{n=1}^{\infty}\Big(1+\sum_{l=1}^\infty \frac{x^{nl}y^l}{l!n^l} \Big) \\
&=\prod_{n=1}^\infty \exp\Big({\frac{x^ny}{n}}\Big) 
=\exp\Big(y\sum_{n=1}^\infty \frac{x^n}{n}\Big)
=\exp{\Big(-y\log(1-x)\Big)} \\
&=(1-x)^{-y} 
=\sum_{m=0}^{\infty}\binom{-y}{m}(-1)^mx^m 
=\sum_{m=0}^\infty \frac{\prod_{j=0}^{m-1}(y+j)}{m!}x^m
\end{split}
\end{equation*}
Therefore, $g(m,k)$ is the coefficient of $y^k$ in $\frac{\prod_{j=0}^{m-1}(y+j)}{m!}$.
\end{proof}

\end{document}